\documentclass[a4paper,12pt,oneside,reqno]{amsart}

\usepackage[headinclude,DIV13]{typearea}
\areaset{15.1cm}{25.0cm}
\parskip 0pt plus .5pt
\usepackage[latin1] {inputenc}
\usepackage{txfonts}  
\usepackage{hyperref}
\usepackage{amssymb}
\usepackage{amsmath}
\usepackage{amsthm}
\usepackage{graphicx}
\usepackage{enumerate}   
\usepackage{amsfonts}
\usepackage{dsfont}
\usepackage{tikz}
\usepackage{url}
\usetikzlibrary{intersections}
\usepackage{pgfplots}

\newtheorem{theorem}{Theorem}[section]
\newtheorem{lemma}[theorem]{Lemma}
\newtheorem{proposition}[theorem]{Proposition}
\newtheorem{corollary}[theorem]{Corollary}

\theoremstyle{definition}

\theoremstyle{remark}
\newtheorem*{remark}{Remark}

\numberwithin{equation}{section}

\newcommand{\dd}{\mathtt{d}}
\newcommand{\RR}{{\mathbb R}}  
\newcommand{\NN}{{\mathbb N}}  
\newcommand{\PP}{{\mathbb P}}  
\newcommand{\EE}{{\mathbb E}}  
\newcommand{\var}{\hbox{\rm Var}}
\newcommand{\capa}{\hbox{\rm cap}}
\newcommand{\eee}{{\mathrm e}}
\newcommand{\Ss}{\mathcal{S}}

\newcommand{\si}{\sigma}
\newcommand{\ti}{\tilde}
\newcommand{\Dd}{\mathcal{D}}
\newcommand{\Qq}{\mathcal{Q}}

\newcommand{\abs}[1]{\lvert #1 \rvert}

\newcommand{\GN}{\Gamma_N}
\newcommand{\tH}{\tilde{H}_N}


\begin{document}
	

\title[Glauber dynamics  for the RDCW model]{Metastability for the dilute Curie--Weiss model with
	Glauber dynamics}
\author[A. Bovier]{Anton Bovier}
\address{A. Bovier\\ Institut f\"ur Angewandte Mathematik\\
	Rheinische Friedrich-Wilhelms-Universit\"at\\ Endenicher Allee 60\\ 53115 Bonn, Germany }
\email{bovier@uni-bonn.de}
\author[S. Marello]{Saeda Marello}
\address{S. Marello\\ Institut f\"ur Angewandte Mathematik\\
	Rheinische Friedrich-Wilhelms-Universit\"at\\ Endenicher Allee 60\\ 53115 Bonn, Germany }
\email{marello@iam.uni-bonn.de}
\author[E. Pulvirenti]{Elena Pulvirenti}
\address{E. Pulvirenti\\ Institut f\"ur Angewandte Mathematik\\
	Rheinische Friedrich-Wilhelms-Universit\"at\\ Endenicher Allee 60\\ 53115 Bonn, Germany }
\email{pulvirenti@iam.uni-bonn.de}

\date{\today}

\begin{abstract}
	We analyse  the metastable behaviour of the  dilute Curie--Weiss model
	subject to a Glauber dynamics. The model is a random version of a mean-field Ising model, 
	where the coupling coefficients are Bernoulli random variables with mean $p\in (0,1)$. This model 
	can be also viewed as an Ising model on the Erd\H{o}s--R{\'e}nyi random graph with edge probability $p$.
	The system is a Markov chain
	where spins flip according to a Metropolis dynamics at inverse temperature $\beta$. We compute
	the average time the system
	takes to reach the stable phase when it starts from a certain probability distribution on the metastable state (called 
	the last-exit biased distribution),
	in the regime where $N\to\infty$, $\beta>\beta_c=1$
	and $h$ is positive and
	small enough. We obtain asymptotic bounds on the probability 
	of the event that the mean metastable hitting time is approximated by that of the Curie--Weiss model. The proof uses the potential theoretic approach to metastability and concentration of 
	measure inequalities.	
\end{abstract}
\thanks{
	This work was partly funded by the Deutsche Forschungsgemeinschaft (DFG, German Research Foundation) 
	under Germany's Excellence Strategy - GZ 2047/1, Projekt-ID 390685813 
	and by the Deutsche Forschungsgemeinschaft (DFG, German Research Foundation) - Projektnummer 211504053 - SFB 1060. \\
	We thank Alessandra Bianchi for fruitful discussions on capacity estimates and Frank den Hollander for giving 
	us early access to his article in preparation with Oliver Jovanovski~\cite{dHJ20} and for useful comments.
}
\subjclass[2010]{60K35, 60K37, 82B20, 82B44} 
\keywords{Metastability, Glauber dynamics, randomly dilute Curie--Weiss model, Erd\H{o}s--R{\'e}nyi random graph} 

\maketitle

	
\section{Introduction and main results}
The randomly dilute Curie--Weiss model (RDCW)  is a classical model of a disordered ferromagnet and was studied, e.g. in  Bovier and Gayrard~\cite{BG}.
It generalises  the standard Curie--Weiss  model (CW) in that the fixed interactions between each pair of spins is replaced by independent, identically distributed, random ferromagnetic couplings 
between any pair of spins.
In Bovier and Gayrard~\cite{BG} it is  proven that the RDCW free energy converges, in the thermodynamic limit, 
to that of the  CW model, 
under some assumptions on the coupling distribution. 
Their result relies on the fact that the RDCW Hamiltonian can be approximated 
by that of the CW model up to a small perturbation which can be uniformly bounded in high probability.
In the last decade the RDCW model have gained again some attention and 
various results at equilibrium have been proven, both in the annealed and
quenched case. 
De Sanctis and Guerra~\cite{DSG08} give an exact expression of 
the free energy first in the high temperature and low connectivity regime, and then 
at  zero temperature. The control of the fluctuations of the magnetisation
in the high temperature limit is addressed by De Sanctis~\cite{DSa09}, while 
recently Kabluchko, L\"{o}we and Schubert~\cite{KLS19} prove a quenched
Central Limit Theorem for the magnetisation in the high temperature
regime.

One of the features which make these random systems with ``bond disorder''  very appealing
is their deep connection with the theory of \emph{random graphs}, which attracted great interest 
in the last years due to their application to real-world networks. Indeed, if the random couplings
are chosen as  i.i.d. Bernoulli random variables with mean $p$, one can  view the model as a
spin system on an Erd\H{o}s--R{\'e}nyi random graph with \emph{fixed} edge probability $p$, 
which makes it a dense graph. 
There has been an extensive study of the Ising model at equilibrium on different kinds of random graphs, e.g. 
in Dembo, Montanari~\cite{DM10} and Dommers, Giardin\`a, van der Hofstad~\cite{DGvdH10}, where several
thermodynamic quantities were analysed when the graph size tends to infinity.
These results were all obtained for sparse graphs which have a locally tree-like structure.
We refer to van der Hofstad~\cite{vdHnotes}  for 
a general overview of these results.

In contrast to the substantial body of literature on the equilibrium properties of the RDCW model, 
much less is known about its dynamical properties. The present paper  focuses on  the phenomenon of  \emph{metastability} for the RDCW model where, for simplicity,  the couplings  are Bernoulli distributed with \emph{fixed} parameter  $p \in (0,1)$, independent of the number of vertices $N$, and 
the system evolves according to a Glauber dynamics. In particular, we give a precise 
estimate of the mean transition time from a certain probability distribution on the \emph{metastable state} 
(called the last-exit biased distribution) to the \emph{stable state},
when the external magnetic field is small enough and positive and when $N$ tends to infinity.
We obtain asymptotic bounds on the probability of the event that the 
average time is close to the CW one times some constants of order $1$ which depend 
on the parameters of the system.

In the context of metastability for interacting particle systems on random graphs,\break
progress has been made for the case of the random regular graph, analysed by Dommers~\cite{Dom17} and 
for the configuration model, studied by Dommers, den Hollander, Jovanovski, and Nardi~\cite{DdHJN17}, 
both subject to Glauber 
dynamics, in the limit as the temperature tends to zero and the number of vertices is fixed. 
Both are dealing with sparse random graphs.
In \cite{dHJ20} den Hollander and Jovanovski  investigate the same model 
considered in the present paper and obtain estimates  on the average crossover time for fixed temperature in the thermodynamic limit.
They show that, with high probability, the exponential term is the same as in the CW 
model, while the multiplicative term is polynomial in $N$. Their analysis relies on coupling arguments
and on  the \emph{pathwise approach} to metastability. This method uses large deviations techniques
in path space  and focuses on properties of typical paths in the spirit of Freidlin-Wentzell theory.
We refer to the classical book by Olivieri and Vares~\cite{olivieri_vares_2005}
for an overview on this method. 

In contrast, in the present paper, we use the \emph{potential theoretic approach} initiated by
Bovier, Eckhoff, Gayrard and Klein in a series of papers \cite{BEGK1,BEGK2,BEGK3}
(see the  monograph of Bovier and den Hollander~\cite{BdH} 
for an in-depth review of this as well as other approaches). This method gives less information on the
evolution of the system, but leads to more precise estimates of
the metastable transition time. It has been successfully applied to a large variety of systems such 
as the random field CW model, 
where the external magnetic field is given by i.i.d. random variables, first by Bovier, Eckhoff, Gayrard and Klein in \cite{BEGK1} and later by Bianchi, Bovier and Ioffe in \cite{BBI09}. 
Furthermore, inspired by the results of Bovier and Gayrard~\cite{BG}, namely that the equilibrium properties 
of the RDCW model are very close to those of the CW model, we observe that, using Talagrand's concentration inequality, the mesoscopic measure can be expressed in terms of that of CW. 

Before stating our results we give a precise definition of the model.

\subsection{Glauber dynamics for the RDCW model}\label{sec:GdER}
Let $[N]=\{1,...,N\}$, $N\in\NN$, be a set of vertices. To each vertex $i \in[N]$ an Ising spin $\sigma_i$ with values in $\{-1,+1\}$ is associated.
We denote  by $\si=\{\sigma_i\,: \, i\in[N], \sigma_i\in\{-1,+1\}\}$ a spin configuration and we define the state space $\Ss_N=\{-1,+1\}^{N}$ to be the set of all such configurations $\si$.
We fix a probability $p \in (0,1)$. Then the \emph{randomly dilute Curie--Weiss} model (RDCW) has the following  \emph{random Hamiltonian} 
$H_N: \Ss_N\to\RR$
\begin{equation}\label{eq:hamER}
	H_N(\sigma)=- \frac{1}{Np} \sum_{1\leq i < j \leq N} J_{ij} \sigma_i \sigma_j - h \sum_{i\in[N]}\sigma_i,
\end{equation}
where $h\in \RR$ represents an external constant magnetic field, while $J_{ij}/Np$ is a ferromagnetic 
random coupling. In particular, $\{J_{ij}\}_{i,j\in[N]}$ is a sequence of i.i.d. random variables 
with $J_{ij} \sim \text{ Ber}(p)$ and $J_{ij}=J_{ji}$.

Let us denote by  $\PP_J$ the joint probability distribution of the the random couplings $J_{ij}$ with $i,j \in [N]$ and by $\EE$ the corresponding mean value.

The RDCW model can be seen as  the Ising model on the 
Erd\H{o}s--R{\'e}nyi random graph with vertex set $[N]$,  edge set $E$ and  edge probability 
$p\in (0,1)$ (see van der Hofstad~\cite{vdH} for a general overview on random graphs). 
In this picture the Hamiltonian can also be written as
\begin{equation}\label{eq:erham}
	H_N(\sigma)=- \frac{1}{Np} \sum_{\{i,j\} \in E}  \sigma_i \sigma_j - h \sum_{i\in[N]}\sigma_i.
\end{equation}
The Gibbs measure associated to the random Hamiltonian $H_N$ is 
\begin{equation}\label{eq:eqmeas} 
	\mu_{\beta, N}(\si)= \frac{\eee^{-\beta H_N(\sigma)}}{Z_{\beta,N}}, \quad \si\in\Ss_N, 
\end{equation}
where $\beta\in(0,\infty)$ is the inverse temperature and  the partition function is defined as
\begin{equation}\label{eq:partf}
	Z_{\beta,N}= \sum_{\sigma\in\Ss_N} \eee^{-\beta H_N(\si)}.
\end{equation}

The Gibbs measure $\mu_{\beta,N}$ is the unique invariant (and reversible)  measure for the
(discrete time) Glauber dynamics 
on $\Ss_N$ with Metropolis transition probabilities 
\begin{equation}\label{eq:rates}
	p_N(\si,\si')= \begin{cases}
		\frac{1}{N} \exp\big(-\beta [H_N(\sigma') -H_N(\sigma) ]_+\big), &\quad \text{if } \si\sim\si',\\
		1- \sum_{\eta\neq \si} p_N(\si,\eta), &\quad \text{if } \si=\si',\\
		0, &\quad \text{else, }
	\end{cases}
\end{equation}
where $\si\sim\si'$ means $||\si-\si'||=2$ with $||\cdot||$ the $\ell_1$-norm on $\Ss_N$, i.e. $\si\sim\si'$ if and only if $\si'$ is obtained from $\si$ by a single spin flip. 
We denote  this Markov chain by $\{\si(t)\}_{t\geq0}$ and write
$\PP_\nu$  for the law of the process $\si(t)$ 
with initial distribution $\nu$ conditioned on the realisation of the random couplings. 
Analogously, $\EE_\nu$ is the quenched  expectation with respect to the Markov
chain with initial distribution $\nu$. Moreover, 
we set $\PP_\si= \PP_{\delta_\si}$.  
For any subset $A\subset \Ss_N$ we define the hitting time of $A$ as
\begin{equation}
	\tau_A=\inf\{t> 0: \, \si_t \in A\}.
\end{equation}

Notice that $H_N$, $\mu_{\beta, N}$ and $p_N$ are random variables, with respect to the random realisation of the random variables $\{J_{ij}\}_{i,j\in[N]}$. In this paper the results involving these random variables hold pointwise, namely for every realisation of $\{J_{ij}\}_{i,j\in[N]}$, unless we specify it differently, as in our main theorems.

\subsection{The Curie--Weiss model}\label{sec:cwmodel}
Before stating the main results, we recall  some results for the
mean-field Curie--Weiss (CW) model  
(see e.g. Bovier and den Hollander~\cite[Section 13]{BdH} and Bovier, Eckhoff, Gayrard and Klein~\cite{BEGK1}).
The CW Hamiltonian $\tilde H_N$ can be obtained taking the mean value of \eqref{eq:hamER} (namely, the first equality in \eqref{eq:H_CW} below).
A simplifying feature of the CW model is that its Hamiltonian depends on the configuration $\si \in \Ss_N$ 
only through the empirical magnetisation $m_N: \Ss_N\to \Gamma_N$
defined as
\begin{equation} \label{eq:def_m}
	m_N(\si)=\frac{1}{N}\sum_{i=1}^N \si_i 
	\in \Gamma_N=
	\left\{-1,-1+\tfrac{2}{N}, ...,1-\tfrac{2}{N}, 1\right\}.
\end{equation}
From now on we will drop the dependency on $N$ from the magnetisation.
Then we can write
\begin{equation} \label{eq:H_CW}
	\tilde H_N(\si)=- \frac{1}{N} \sum_{1\leq i< j\leq N}  \sigma_i \sigma_j - h \sum_{i\in[N]}\sigma_i=-N\left(\tfrac{1}{2}m(\si)^2 + h m(\si)\right)
\end{equation}
and we can define, for any $m \in \GN$, 
\begin{equation}
	E(m)=-\tfrac{1}{2}m^2 - h m,
\end{equation}
obtaining 
\begin{equation}\label{eq:tiHNE}
	\tilde H_N(\si)= N E(m(\si)).
\end{equation}

The associated Gibbs measure is 
\begin{equation} \label{eq:ti_mu}
	\tilde\mu_{\beta, N}(\si)= 
	\frac{\eee^{-\beta N E(m(\si))}}{\tilde Z_{\beta,N}}, \quad \si\in\Ss_N,
\end{equation}
where $\tilde Z_{\beta,N}= \sum_{\sigma\in\Ss_N} \eee^{-\beta \tilde H_N(\si)}$ is the normalising partition function. 

We denote the law of $m(\si)$ under the Gibbs measure by
\begin{equation} \label{eq:tiQ}
	\ti\Qq_{\beta,N} = \ti\mu_{\beta,N}\circ m^{-1}.
\end{equation}

Then 
\begin{equation}\label{eq:meso}
	\ti\Qq_{\beta,N}(m)=
	\frac{\eee^{-\beta N E(m)}}{\ti Z_{\beta,N}}\sum_{\si\in\Ss_N} \mathds{1}_{m(\si)=m}
	=\frac{\eee^{-\beta N E(m)}}{ \ti Z_{\beta,N}}\binom{N}{\frac{1+m}{2}N}
	=\frac{\eee^{-\beta N f_{\beta,N}(m)}}{ \ti Z_{\beta,N}},
\end{equation}
where 
\begin{equation} \label{eq:f_betaN}
	f_{\beta,N}(m)=E(m) + \beta^{-1}I_N(m)= -\frac{m^2}{2}-hm+\beta^{-1}I_N(m)
\end{equation}
is the finite volume \emph{free energy}, while the \emph{entropy} of the system is given by 
the following combinatorial coefficient
\begin{equation}\label{eq:defIN}
	I_N(m)=-\frac{1}{N}\log \binom{N}{\frac{1+m}{2}N}
\end{equation}
and it has the following properties: as $N\to\infty$, 
\begin{equation}\label{eq:limIN}
	I_N(m)\rightarrow I(m)\equiv \frac{1-m}{2} \log \frac{1-m}{2} +\frac{1+m}{2} \log \frac{1+m}{2},
\end{equation}
more precisely, 
\begin{equation} \label{eq:def_f}
	I_N(m)-I(m)=\frac 1{2N} \ln \frac {1-m^2}4+\frac {\ln N+\ln(2\pi)}{2N} +O\left(\frac{1}{N^2}\right).
\end{equation}
As reference see for example Bovier, Eckhoff, Gayrard and Klein~\cite[(7.18)]{BEGK1}.

Notice that the previous definitions imply 
\begin{equation}\label{eq:tiMuQu}
	\tilde\mu_{\beta, N}(\si)= \ti\Qq_{\beta,N}(m(\si)) \,\eee^{NI_N(m(\si))}.
\end{equation}

We use the notation $f_{\beta}(m)=\lim_{N\to\infty}f_{\beta,N}(m)$. We refer to Bovier and den Hollander~\cite[(13.2.6)]{BdH} for more details on the following result.

\begin{lemma}\label{lem:expfN}
	For $m \in (-1,1)$,
	\begin{equation}
		\eee^{-\beta N f_{\beta,N} (m) }= \eee^{-\beta N f_{\beta} (m)} (1+o(1)) \sqrt{\frac{2}{\pi N(1-m^2)}}
	\end{equation}
	and for $m \in \{ 1,-1 \}$, $ f_{\beta,N} (m) = f_{\beta} (m)$.
\end{lemma}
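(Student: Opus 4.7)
The plan is to reduce the claimed asymptotic to the Stirling-type expansion \eqref{eq:def_f} already recorded for $I_N(m)-I(m)$. Since $E(m)$ depends neither on $N$ nor on $\beta$, the definitions \eqref{eq:f_betaN} and $f_\beta(m)=E(m)+\beta^{-1}I(m)$ give
\begin{equation*}
\beta N\bigl[f_{\beta,N}(m)-f_\beta(m)\bigr]=N\bigl[I_N(m)-I(m)\bigr],
\end{equation*}
so the entire task is to evaluate $\exp(-N[I_N(m)-I(m)])$ for $m\in(-1,1)$.

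Next I would plug in \eqref{eq:def_f}: multiplying by $N$ gives
\begin{equation*}
N[I_N(m)-I(m)]=\tfrac12\ln\tfrac{1-m^2}{4}+\tfrac12\ln(2\pi N)+O(1/N).
\end{equation*}
Exponentiating, the $O(1/N)$ remainder becomes a factor $1+o(1)$, the logarithmic terms become
\begin{equation*}
\eee^{-\frac12\ln\frac{1-m^2}{4}}\cdot\eee^{-\frac12\ln(2\pi N)}=\sqrt{\tfrac{4}{1-m^2}}\cdot\tfrac{1}{\sqrt{2\pi N}}=\sqrt{\tfrac{2}{\pi N(1-m^2)}},
\end{equation*}
which yields the announced formula after multiplying by $\eee^{-\beta Nf_\beta(m)}$.

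For the boundary case $m\in\{-1,+1\}$ I would simply note that \eqref{eq:defIN} gives $I_N(\pm1)=-\tfrac{1}{N}\log\binom{N}{0}=-\tfrac{1}{N}\log\binom{N}{N}=0$, while \eqref{eq:limIN} with the convention $0\log 0=0$ gives $I(\pm1)=0$ as well; hence $f_{\beta,N}(\pm 1)=f_\beta(\pm1)=E(\pm1)=\mp h-\tfrac12$.

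No real obstacle is expected: the proof is a one-line bookkeeping from \eqref{eq:def_f}, and the only mild point to be careful about is that the factor $\sqrt{1-m^2}$ in the denominator is bounded away from $0$ on compact subsets of $(-1,1)$, so the $o(1)$ remainder is uniform there; if uniformity near $m=\pm 1$ is needed later, one would have to track the error estimate in \eqref{eq:def_f} more carefully, but this is not part of the present statement.
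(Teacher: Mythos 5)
Your proof is correct and is the natural one: it reduces the lemma to the Stirling-type expansion \eqref{eq:def_f} that the paper already records, multiplies through by $N$, and exponentiates. This matches what the cited reference (Bovier--den Hollander) does, and the algebra, the sign bookkeeping in $\beta N[f_{\beta,N}-f_\beta]=N[I_N-I]$, and the boundary evaluation $I_N(\pm1)=I(\pm1)=0$ are all correct.
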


\begin{remark}
	Comparing our definitions and the literature (e.g. Bovier and den Hollander~\cite[Section 13.1]{BdH}), one notices that the Gibbs measure is often defined with an additional factor $2^{-N}$, corresponding to the reference measure. More precisely, the Gibbs measure would be $\tilde\mu_{\beta, N}(\si)= 
	\frac{1}{\tilde Z_{\beta,N}}\eee^{-\beta N E(m(\si))}2^{-N}$, where the partition function would be defined by $\sum_{\sigma\in\Ss_N} \eee^{-\beta \tilde H_N(\si)}2^{-N}$. 
	We preferred to discard the $2^{-N}$ from our definitions. 
	Therefore, for consistency, our definition of $I_N$ differs  from the classical one by a factor $2^{-N}$ inside the logarithm, 
	yielding a difference of $\log(2)$ in the limit in \eqref{eq:limIN} with respect to Bovier and den Hollander~\cite[(13.1.14)]{BdH} or Bovier, Eckhoff, Gayrard and Klein~\cite[(7.17)]{BEGK1}.
\end{remark}

We consider the Glauber dynamics associated to the CW Hamiltonian 
in analogy with \eqref{eq:rates} and with transition probabilities $\ti p_N(\si,\si')$. 
A particular feature of this model is that the image process $m(t)\equiv m(\si(t))$ of the 
Markov process $\si(t)$ under the map $m$ is again a Markov process on $\GN$, with transition 
probabilities 
\begin{equation} \label{eq:rates_r}
	\ti r_N(m,m')= 
	\begin{cases}
		\exp(-\beta N[E(m') - E(m)]_+ ) \frac{(1-m)}{2} \quad & \text{if }m'=m + \frac{2}{N}, \\
		\exp(-\beta N[E(m') - E(m)]_+ ) \frac{(1+m)}{2} & \text{if } m'=m - \frac{2}{N},\\
		0 &\text{else}. 
	\end{cases} \\
\end{equation}

The equilibrium CW model displays a phase transition. 
Namely, there is a critical value of the inverse temperature $\beta_c=1$ such that, in the regime $\beta>\beta_c$, $h>0$ and small, 
the free energy $f_{\beta}(m)$ is a double-well function with
local minimisers $m_-, m_+$ and saddle point $m^*$. They are the
solutions of equation $m=\tanh(\beta(m+h))$. Since $f_{\beta}(m_-)>f_\beta(m_+)$, the phase with $m_-$
represents the metastable state, while $m_+$ represents the stable state for the system. 
Define $m_{-}(N),m^*(N),m_{+}(N)$ as the closest points in $\GN$ to $m_-, m^*,m_+$ respectively, with respect to the Euclidean distance on $\RR$.
$\{m_{-}(N), m_{+}(N)\}$ form a metastable set in the sense of Definition 8.2 of Bovier and den Hollander~\cite{BdH}. 
Let $\EE^{\text{CW}}_{m_{-}(N)}$ be the expectation with respect to the Markov process $ m(t)$ with transition probabilities  $\ti r_N$ and starting at $m_{-}(N)$. Then the following theorem holds. 
\begin{theorem}\label{teo:cw}
	For $\beta>1$ and $h>0$ small enough, as $N\to\infty$,
	\begin{eqnarray}\nonumber
		\EE^{\text{CW}}_{m_{-}(N)}[\tau_{m_{+}(N)}]&=&
		\exp\Big(\beta N \left[f_{\beta}(m^*)-f_{\beta}(m_-)\right]\Big)\\
		&&\times\frac{\pi}{1+m^*}\sqrt{\frac{1-{m^*}^2}{1-m_-^2}}\frac{N(1+o(1)) }{\beta\sqrt{f''_{\beta}(m_-)\left(-f''_{\beta}(m^*)\right)}}.
	\end{eqnarray}
\end{theorem}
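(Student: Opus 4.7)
The strategy is to exploit the one-dimensional birth-death structure of the image chain $m(t)$ on $\GN$, for which the potential-theoretic approach yields the exact closed-form representation
\begin{equation*}
\EE^{\CW}_{m_{-}(N)}[\tau_{m_{+}(N)}] = \sum_{m = m_{-}(N)}^{m_{+}(N) - 2/N} \frac{1}{\ti\Qq_{\beta,N}(m)\, \ti r_N(m, m+2/N)} \sum_{-1 \leq m' \leq m} \ti\Qq_{\beta,N}(m').
\end{equation*}
This is equivalent to the standard capacity and equilibrium-potential formula of the potential-theoretic approach, because for a one-dimensional chain the Dirichlet principle is solved explicitly by the Ohm's-law expression for the capacity and by the monotone harmonic equilibrium potential.

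Both sums are then estimated by Laplace's method: the inner sum is sharply concentrated at the well minimum $m_-$, while the outer sum is sharply concentrated at the saddle $m^*$. Applying the Gaussian expansions $f_\beta(m)-f_\beta(m_-) \sim \tfrac12 f''_\beta(m_-)(m-m_-)^2$ and $f_\beta(m)-f_\beta(m^*) \sim -\tfrac12(-f''_\beta(m^*))(m-m^*)^2$, combining with Lemma~\ref{lem:expfN}, and replacing the Riemann sums of mesh $2/N$ by the corresponding Gaussian integrals, the inner sum produces a factor proportional to $(1/\ti Z_{\beta,N})\,e^{-\beta N f_\beta(m_-)}\,\sqrt{2/(\pi N(1-m_-^2))}\,\sqrt{2\pi/(\beta N f''_\beta(m_-))}\,(N/2)$, while the outer sum produces $\ti Z_{\beta,N}\,e^{\beta N f_\beta(m^*)}\,\sqrt{\pi N(1-m^{*2})/2}\,\sqrt{2\pi/(\beta N(-f''_\beta(m^*)))}\,(N/2)/\ti r_N(m^*, m^*+2/N)$. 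Multiplying the two, $\ti Z_{\beta,N}$ cancels, the exponentials combine into $\exp(\beta N[f_\beta(m^*)-f_\beta(m_-)])$, and the remaining algebraic factors collapse to the stated prefactor.

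The main obstacle is the careful evaluation of the rate $\ti r_N(m^*, m^*+2/N)$. The naive value $(1-m^*)/2$ from \eqref{eq:rates_r} must be corrected by the exponential weight $\exp(-\beta N[E(m^*+2/N)-E(m^*)]_+)$, whose contribution is of order one because $E'(m^*) = -(m^*+h) \ne 0$ generically. Invoking the saddle equation $m^* = \tanh(\beta(m^*+h))$, this exponential weight converts $(1-m^*)/2$ into $(1+m^*)/2$ at leading order, yielding exactly the $1/(1+m^*)$ factor in the theorem's prefactor. Finally, the uniformity of the Gaussian approximations away from $m_-$ and $m^*$ and of the remainder in Lemma~\ref{lem:expfN} must be tracked with enough care to preserve the overall $(1+o(1))$ rate.
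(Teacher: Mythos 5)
Your proof plan is correct and follows essentially the same route as the cited reference (Bovier and den Hollander, Theorem 13.1): project onto the one-dimensional magnetisation chain, then apply the potential-theoretic machinery, which for a birth-death chain collapses to the explicit Ohm's-law/renewal formula you write, and evaluate the two resulting Riemann sums by Laplace's method at $m_-$ and $m^*$ respectively. Your handling of the subtle saddle factor---that $\exp(-\beta N[E(m^*+\tfrac2N)-E(m^*)]_+)\approx\exp(2\beta(m^*+h))=\frac{1+m^*}{1-m^*}$ by the stationarity equation $\mathrm{arctanh}(m^*)=\beta(m^*+h)$, converting $\frac{1-m^*}{2}$ into $\frac{1+m^*}{2}$---is exactly right and is the only non-mechanical step; the rest of the algebra reproduces the stated prefactor.
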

As a reference see Bovier and den Hollander~\cite[Theorem 13.1]{BdH}. The difference of sign in the denominator with respect to our statement is due to the fact that their result holds for $h<0$, while ours for $h>0$.

We conclude this section by giving the explicit formula of the capacity for the CW model. The definition of \emph{capacity} 
is given in \eqref{capacrepr}, while its relation with the mean hitting time is given by the key relation \eqref{eq:EtauCap}. 
Let us denote, for any subset $U$ of $\Gamma_N$, the set of configurations with magnetisation in $U$ by
\begin{equation}
	\Ss_N[U]=\{\sigma \in \Ss_N: m(\sigma) \in U \}
\end{equation} 
and for simplicity, for any $m \in \Gamma_N$, the set of configurations with given magnetisation $m$  by $\Ss_N[m]$. Notice that $\Ss_N[m]$ has cardinality $\eee^{-NI_N(m)}$, where $I_N(m)$ is defined in \eqref{eq:defIN}.

Then, the following formula, 
\begin{equation} \label{eq:capaCW}
	\capa^{\text{CW}}(\Ss_N[m_-(N)],\Ss_N[m_+(N)])
	= \frac{1}{\tilde{Z}_{\beta,N}}\mathrm{e}^{-\beta N f_\beta(m^*)}  \frac{\sqrt{\beta \left(-f''_{\beta}(m^*)\right)}}{\pi N}\sqrt{\frac{1+m^*}{1-m^*}} (1+o(1)),
\end{equation}
follows from standard arguments (see e.g. techniques used in the proof of Bovier and den Hollander~\cite[Theorem 13.1]{BdH}).

\subsection{Main results}
For any $A,B\subset \Ss_N$ disjoint,
we define the so-called \emph{last-exit biased distribution} on $A$ for the transition
from $A$ to $B$ as
\begin{equation}\label{eq:deflebdist}
	\nu_{A,B}(\sigma) = \frac{\mu_{\beta,N}(\si) \PP_\si(\tau_B<\tau_A)}{\sum_{\si\in A} \mu_{\beta,N}(\si) \PP_\si(\tau_B<\tau_A)}, \qquad \si\in A.
\end{equation}
Since we are going to use $\nu_{A,B}$ on the sets $\Ss_N[m_{-}(N)],\Ss_N[m_{+}(N)]$ defined above, we introduce the following simplified notation
\begin{equation}
	\nu^N_{m_-,m_+}=\nu_{\Ss_N[m_-(N)], \Ss_N[m_+(N)]}.
\end{equation}

The following theorem gives a description of the dynamical properties of the RDCW model
in the \emph{metastable regime} where $h$ is positive and small 
enough, $\beta>\beta_c=1$ ($\beta_c$ is the critical inverse temperature for the RDCW model) and  $N$ is going to infinity.
We provide an estimate on the mean  time it takes to 
the system, starting with initial distribution $\nu^N_{m_-,m_+}$, to reach $\Ss_N[m_+(N)]$.
More precisely, we estimate, in the limit as $N \to \infty$, its ratio with the mean metastable exit time
for the CW model to go from $m_-(N)$ to $m_+(N)$, providing 
constant upper and lower bounds independent of $N$. 
Because of the random interaction, the result is given in the form of 
tail bounds. 

After recalling that notation $\PP_J$ and $\EE_{\nu}$ was introduced in Section~\ref{sec:GdER}, while $\EE^{\text{CW}}_{m_-(N)}$ was introduced in Section~\ref{sec:cwmodel}, we are ready to formulate our main theorem.

\begin{theorem}[Mean metastable exit time]\label{teo:exptime}
	For $\beta>1$, $h>0$ small enough and for $s>0$,
	there exist absolute constants $k_1,k_2>0$ and $C_1(p,\beta)<C_2(p,\beta,h)$ independent of $N$, 
	such that
	\begin{equation}
		\lim_{N\uparrow\infty}	\PP_J\left(	 C_1  \eee^{-s} (1+o(1))
		\leq \frac{ \EE_{\nu^N_{m_-,m_+}} \left[\tau_{\Ss_N[m_+(N)]}\right]}{\EE^{\text{CW}}_{m_-(N)}\left[\tau_{m_+(N)}\right]}
		\leq C_2  \eee^{s} (1+o(1))\right)\geq 1-k_1\eee^{-k_2 s^2}.
	\end{equation}	
	
\end{theorem}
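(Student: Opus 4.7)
My starting point is the standard potential-theoretic identity
\begin{equation*}
\EE_{\nu_{A,B}}[\tau_B]\,\capa(A,B)=\sum_{\sigma\in\Ss_N}\mu_{\beta,N}(\sigma)\,h_{A,B}(\sigma),
\end{equation*}
applied with $A=\Ss_N[m_-(N)]$, $B=\Ss_N[m_+(N)]$, and $h_{A,B}$ the equilibrium potential. The plan is to bound the RDCW capacity and the numerator separately, with high $\PP_J$-probability, by their Curie--Weiss counterparts up to a stochastic factor $\eee^{\pm s}$, and then divide using Theorem~\ref{teo:cw} and \eqref{eq:capaCW}. The bridge between the two models is the pointwise identity
\begin{equation*}
\mu_{\beta,N}(\sigma)=\frac{\tilde Z_{\beta,N}}{Z_{\beta,N}}\,\eee^{-\beta\Delta_N(\sigma)}\,\tilde\mu_{\beta,N}(\sigma),\qquad \Delta_N(\sigma):=H_N(\sigma)-\tH(\sigma)=-\frac{1}{Np}\sum_{i<j}(J_{ij}-p)\,\sigma_i\sigma_j.
\end{equation*}
For each fixed $\sigma$, $\Delta_N(\sigma)$ is a centered sum of independent Bernoullis whose variance in $J$ is $O(1)$ and which is $(Np)^{-1}$-Lipschitz on $\{0,1\}^{\binom{N}{2}}$, so Talagrand's concentration inequality yields $\PP_J(|\Delta_N(\sigma)|>C(p,\beta)+cs)\le k_1\eee^{-k_2 s^2}$. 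To promote this to statements about measures, I apply the same concentration to the mesoscopic averages $\Lambda_N(m):=|\Ss_N[m]|^{-1}\sum_{\sigma\in\Ss_N[m]}\eee^{-\beta\Delta_N(\sigma)}$, which are also Lipschitz in $J$; a union bound over the $O(N)$ layers $m\in\GN$ preserves the tail form, and the ratio $\tilde Z_{\beta,N}/Z_{\beta,N}$ is then controlled by summing these layer estimates.

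For the capacity I invoke the Dirichlet-form variational principle. The upper bound uses the CW equilibrium potential $\tilde h$, a function of $m$ alone, as test function: every edge contribution of the RDCW Dirichlet form then equals its CW analogue times a factor of the form $\eee^{-\beta\Delta_N(\cdot)}$, which by the concentration above is bounded uniformly on the relevant edges by $C_2\eee^{s}$. For the matching lower bound I lift the one-dimensional CW flow on $\GN$ to a Berman--Konsowa flow on $\Ss_N$ (equivalently, apply Thomson's principle to a unit flow built from the mesoscopic reversible chain), whose loss at the bottleneck layer $\Ss_N[m^*(N)]$ is controlled by $\Lambda_N(m^*(N))$ and is therefore at least $C_1\eee^{-s}$. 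Combined with \eqref{eq:capaCW} this yields two-sided bounds $C_1\eee^{-s}\,\capa^{\text{CW}}(A,B)\le\capa(A,B)\le C_2\eee^{s}\,\capa^{\text{CW}}(A,B)$ on the good event.

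Because $0\le h_{A,B}\le 1$ and standard renewal estimates pin $h_{A,B}$ close to $1$ throughout the metastable valley of $m_-$, the numerator is sandwiched between $\mu_{\beta,N}(\Ss_N[m_-(N)])$ and $\mu_{\beta,N}(\Ss_N[\{m\le m^*(N)\}])$. A Laplace expansion of \eqref{eq:meso}--\eqref{eq:defIN} combined with Lemma~\ref{lem:expfN} gives the corresponding CW mass as $\eee^{-\beta N f_\beta(m_-)}$ times an explicit Gaussian prefactor, and the mesoscopic concentration of $\Lambda_N$ transfers the estimate to the RDCW measure with a stochastic factor $\eee^{\pm s}$. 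Combining the numerator and capacity bounds and dividing by the CW formula of Theorem~\ref{teo:cw} yields the stated inequality with constants $C_1(p,\beta)$ and $C_2(p,\beta,h)$ independent of $N$. The main technical obstacle is the lower bound on $\capa(\Ss_N[m_-(N)],\Ss_N[m_+(N)])$: a naive restriction of the Dirichlet form to test functions of $m$ alone loses polynomial factors in $N$, while a pure pointwise comparison of edge weights cannot control the random fluctuations at the saddle layer. The flow must push mass through configurations $\sigma\in\Ss_N[m^*(N)]$ whose random weights deviate from their deterministic CW values, and controlling this deviation uniformly over a set of configurations of size $\binom{N}{(1+m^*)N/2}$ is precisely what Talagrand's inequality for $\Lambda_N(m^*(N))$ provides.
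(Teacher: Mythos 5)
Your top-level strategy matches the paper's: start from the identity $\EE_{\nu_{A,B}}[\tau_B]\,\capa(A,B)=\sum_{\sigma}\mu_{\beta,N}(\sigma)\,h_{AB}(\sigma)$, bound the capacity via the Dirichlet and Thomson principles, control the numerator via Talagrand concentration, and then divide by Theorem~\ref{teo:cw} and \eqref{eq:capaCW}. Your Dirichlet/Thomson treatment of the capacity is also essentially what Theorems~\ref{thm:upperB}--\ref{thm:lowerB} do (test functions depending on $m$ alone for the upper bound; a lifted mesoscopic flow for the lower bound). The genuine gap is in the numerator.

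You assert that ``standard renewal estimates pin $h_{AB}$ close to $1$ throughout the metastable valley of $m_-$.'' The paper explicitly flags at the start of Section~\ref{sec:harmonicSum} that this is precisely what is \emph{not} available here: the renewal bound compares $\capa(\sigma,A)$ with $\capa(\sigma,B)$ for individual configurations $\sigma$, and single-spin capacities are too small in this model to be useful. Your proposed sandwich $\mu_{\beta,N}(\Ss_N[m_-(N)])\le\sum_\sigma\mu_{\beta,N}(\sigma)h_{AB}(\sigma)\le\mu_{\beta,N}(\Ss_N[\{m\le m^*(N)\}])$ also has a false upper half: $h_{AB}(\sigma)>0$ for configurations with $m(\sigma)\in(m^*(N),m_+(N))$, and showing that their total contribution is subleading is where the real work is. The paper replaces renewal by a super-harmonic test function $\psi(\sigma)=\exp(\beta N(1-\gamma)f_\beta(m(\sigma)))$ combined with optional stopping (Lemmas~\ref{lem:hitting6.3} and~\ref{lem:superHarm}), plus a combinatorial path-counting lower bound on $\PP_\sigma(\tau_{\Ss_N[m_+(N)]}<\tau_{\Ss_N[m_{\varepsilon_N}]})$ (Lemma~\ref{lem:hitting6.11}), together with a delicate choice of auxiliary parameters $\delta,\varepsilon,\gamma$. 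A secondary problem is your union bound over the $\Theta(N)$ mesoscopic layers $m\in\GN$: $\log\Lambda_N(m)$ is Lipschitz in $J$ at scale $\beta/(p\sqrt2)$, so its fluctuations are of \emph{constant} order, and to make the union bound vanish for fixed $s$ you must push the deviation threshold up by $\Theta(\sqrt{\log N})$; this converts into multiplicative factors of order $\exp(c\sqrt{\log N})$, not the $N$-independent constants $C_1,C_2$ you need. The paper avoids this by applying Talagrand once per test weight $g$ to the aggregate $\mathcal Z_{N,g}=\sum_m g(m)\sum_{\sigma\in\Ss_N[m]}\eee^{-\beta\Delta(\sigma)}$ (Proposition~\ref{pro:Ysubg}, Corollary~\ref{cor:theBound}) and lets a Laplace-type estimate single out the dominant layer, so no union over $m$ is ever taken.
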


The quantities $C_1$ and $C_2$ in the previous theorem can be explicitly written. Set
\begin{equation} \label{eq:alpha_k}
	\alpha= \frac{\beta^2 (1-p)}{4p}, \qquad
	\kappa=\alpha+ \max_{\eta\in(0,1)}
	\left\{ \log \eta -  \frac{\beta \sqrt{2\alpha +  \log\left(\frac{c_1}{(1-\eta)^2}\right)} }{p\sqrt{2c_2}} \right\},
\end{equation}
where $c_1, c_2>0$ are absolute constants coming from Theorem~\ref{thm:TalConcIneq}. It is easy to see that
$\kappa<\alpha$. With this notation
\begin{equation}
	C_1 = C_1(\beta, h, p)= \eee^{-2\beta(1+h) -\alpha + \kappa}, 
\end{equation}
\begin{equation}
	C_2 = C_2(\beta, h, p) =\eee^{2\beta(1+h) +2\alpha}.
\end{equation}

\medskip

\subsection{Proof of the main theorem}\label{sec:ideaproof}
The proof of Theorem~\ref{teo:exptime} is based on the \emph{potential theoretic approach} to metastability, 
which turns out to be a rather powerful tool to analyse the main object we are interested in, i.e. the mean hitting time of $\Ss_N[m_+(N)]$ for the system with initial distribution $\nu^N_{m_-,m_+}$. 
The general ideas of this approach 
were first introduced in a series of papers by 
Bovier, Eckhoff, Gayrard and Klein~\cite{BEGK1,BEGK2,BEGK3}. We refer to 
Bovier and den Hollander~\cite{BdH} for an overview on this method. 

The crucial formula in the study of metastability is given by the following 
relation linking mean 
hitting time and \emph{capacity} of two sets $A,B\in \Ss_N$, which can be found in Bovier and den Hollander~\cite[Eq. (7.1.41)]{BdH}
\begin{equation} \label{eq:EtauCap}
	\EE_{\nu_{A,B}}[\tau_B]=
	\sum_{\si\in A} \nu_{A,B}(\sigma)\EE_{\si}[\tau_B]=
	\frac{1}{\capa(A,B)}\sum_{\si'\in \Ss_N} \mu_{\beta,N}(\si') h_{AB}(\si'),
\end{equation}
where the capacity, as in Bovier and den Hollander~\cite[(7.1.39)]{BdH}, is defined by
\begin{equation}
	\label{capacrepr}
	\capa(A,B)=\sum_{\si\in A}\mu_{\beta,N}(\si) \PP_{\si}(\tau_B<\tau_A).
\end{equation}
The function $h_{AB}$ is called \emph{harmonic function} and has the following 
probabilistic interpretation
\begin{equation} \label{eq:h_prop}
	h_{A B}(\si) =\left\{\begin{array}{ll}
		\PP_\si(\tau_A<\tau_B) &\qquad \si \in \Ss_N \setminus (A \cup B),\\
		\mathds{1}_{A}(\si) &\qquad \si \in A\cup B.\\
	\end{array}
	\right.
\end{equation}
We refer to Bovier and den Hollander~\cite[Section 7.1.2]{BdH} for further details on the latter quantities.

By \eqref{eq:EtauCap}, in order to estimate mean hitting times one needs estimates both on the capacity and on the harmonic function.

We prove bounds on 
the capacity of two sets $\Ss_N[m_1],\Ss_N[m_2]$, stated in the two following theorems.

\begin{theorem}\label{thm:upperB}
	For any $m_1 \neq m_2 \in \GN$ and any $s>0$, there exist absolute constants $k_1,k_2>0$ such that  
	\begin{equation}
		\PP_{J}\left(\frac{
			Z_{\beta,N}\,\capa\left(\Ss_N[m_1],\Ss_N[m_2]\right)}{\tilde{Z}_{\beta,N}\,\capa^{\text{CW}}\left(\Ss_N[m_1],\Ss_N[m_2]\right) } \leq  \eee^{s+2\beta(1+h) + \alpha}(1+o(1))  \right)\geq 1-k_1\eee^{-k_2 s^2},
	\end{equation} 
	asymptotically as $N \to \infty$, where $\alpha$ is defined in \eqref{eq:alpha_k}.
\end{theorem}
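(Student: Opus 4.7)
The strategy is to combine the Dirichlet principle with the Hamiltonian decomposition $H_N=\tilde H_N+R_N$, where $R_N(\sigma):=-\frac{1}{Np}\sum_{i<j}(J_{ij}-p)\sigma_i\sigma_j$ is the centred disorder fluctuation. The constant $\alpha$ will arise from the annealed exponential moment of $R_N$ via a direct Bernoulli computation, while the $\eee^{s}$ factor with its $\eee^{-k_2 s^2}$ tail will come from Theorem~\ref{thm:TalConcIneq} applied to a global functional of the couplings.

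First I would plug the lifted Curie--Weiss equilibrium potential $h(\sigma)=h^*_{\CW}(m(\sigma))$ into the Dirichlet principle. Using $\mu_{\beta,N}(\sigma)p_N(\sigma,\sigma')=\frac{1}{NZ_{\beta,N}}\eee^{-\beta\max(H_N(\sigma),H_N(\sigma'))}$ for $\sigma\sim\sigma'$ together with the elementary inequalities $\max(a+x,b+y)\geq\max(a,b)+\min(x,y)$ and $\max(\eee^{-\beta R_N(\sigma)},\eee^{-\beta R_N(\sigma')})\leq \eee^{-\beta R_N(\sigma)}+\eee^{-\beta R_N(\sigma')}$, I obtain
\[
\eee^{-\beta\max(H_N(\sigma),H_N(\sigma'))}\leq \eee^{-\beta\max(\tilde H_N(\sigma),\tilde H_N(\sigma'))}\bigl(\eee^{-\beta R_N(\sigma)}+\eee^{-\beta R_N(\sigma')}\bigr),
\]
and after symmetrising the ordered pair sum in $\sigma\leftrightarrow\sigma'$ the variational inequality becomes $Z_{\beta,N}\,\capa(\Ss_N[m_1],\Ss_N[m_2])\leq F(J)/N$, where the global functional
\[
F(J):=\sum_{\sigma\sim\sigma'}\eee^{-\beta\max(\tilde H_N(\sigma),\tilde H_N(\sigma'))}\bigl[h^*_{\CW}(m(\sigma'))-h^*_{\CW}(m(\sigma))\bigr]^2\eee^{-\beta R_N(\sigma)}
\]
differs from the CW Dirichlet sum $2N\tilde Z_{\beta,N}\,\capa^{\CW}(\Ss_N[m_1],\Ss_N[m_2])$ only by the insertion of the random weight $\eee^{-\beta R_N(\sigma)}$.

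To control $F$ I would first compute its annealed value: by independence of the $J_{ij}$ and the second-order expansion of the Bernoulli log-MGF,
\[
\log\EE_J\bigl[\eee^{-\beta R_N(\sigma)}\bigr]=\sum_{i<j}\frac{\beta^2 p(1-p)}{2(Np)^2}(1+o(1))=\alpha(1+o(1)),
\]
hence $\EE_J[F]\leq 2N\eee^{\alpha}(1+o(1))\tilde Z_{\beta,N}\,\capa^{\CW}$. For the concentration, toggling any single $J_{ij}$ scales each $\eee^{-\beta R_N(\sigma)}$, and hence $F$, by a factor in $[\eee^{-\beta/(Np)},\eee^{\beta/(Np)}]$ uniformly in $\sigma$, so $\log F$ has bounded differences of size $O(1/N)$ across the $\binom N2$ couplings, yielding a variance proxy of order one. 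Theorem~\ref{thm:TalConcIneq} then produces $\log F\leq \EE_J[\log F]+s\leq \log\EE_J[F]+s$ with probability $\geq 1-k_1\eee^{-k_2 s^2}$, the second inequality by Jensen. Combining everything and using $2\leq\eee^{2\beta(1+h)}$ for $\beta>1$, $h>0$,
\[
\frac{Z_{\beta,N}\,\capa(\Ss_N[m_1],\Ss_N[m_2])}{\tilde Z_{\beta,N}\,\capa^{\CW}(\Ss_N[m_1],\Ss_N[m_2])}\leq \frac{\eee^s\EE_J[F]}{N\tilde Z_{\beta,N}\,\capa^{\CW}}\leq \eee^{s+2\beta(1+h)+\alpha}(1+o(1))
\]
with the stated probability.

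The principal obstacle is the concentration step: the form of Talagrand's inequality supplied by Theorem~\ref{thm:TalConcIneq} must be shown to apply to $\log F$ with the sub-Gaussian tail $\eee^{-k_2 s^2}$ and constants independent of $N$. Working with the global functional $F$ rather than with the magnetisation-level sums $\sum_{\sigma\in\Ss_N[m]}\eee^{-\beta R_N(\sigma)}$ circumvents any union bound over $m\in\GN$; the price is that the Lipschitz constant of $\log F$ must be controlled uniformly in the configuration, which is what the product structure $\eee^{-\beta R_N(\sigma)}=\prod_{i<j}\eee^{\beta(J_{ij}-p)\sigma_i\sigma_j/(Np)}$ ultimately delivers.
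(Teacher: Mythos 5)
Your argument is correct and reaches the stated conclusion, but the key algebraic step is genuinely different from the paper's. The paper restricts the Dirichlet minimum to magnetisation-dependent test functions, then multiplies and divides by the Curie--Weiss transition weight, producing the ratio $G(\sigma,m')$ appearing in \eqref{eq:ratio_trans+}--\eqref{eq:ratio_trans-}, which it bounds by $N\tfrac{1\mp m}{2}\,\eee^{2\beta(1+h)}$; that coarse $\eee^{2\beta(1+h)}$ is exactly the factor in the statement. You instead apply the exact inequality $\max(a+x,b+y)\geq\max(a,b)+\min(x,y)$ to $\max(H_N(\sigma),H_N(\sigma'))$, extracting $\eee^{-\beta\max(\tilde H_N(\sigma),\tilde H_N(\sigma'))}$ exactly and leaving only $\eee^{-\beta\min(R_N(\sigma),R_N(\sigma'))}\leq\eee^{-\beta R_N(\sigma)}+\eee^{-\beta R_N(\sigma')}$; after symmetrisation the only loss is a factor $2$, which you then crudely majorise by $\eee^{2\beta(1+h)}$ merely to match the theorem's constant. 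Your route therefore yields the slightly sharper prefactor $2\eee^{\alpha}$ in place of $\eee^{2\beta(1+h)+\alpha}$, which is a small but real improvement. The concentration-plus-first-moment step is essentially shared content: the paper packages it as Proposition~\ref{pro:Ysubg} and Lemma~\ref{lem:upper_p} inside Corollary~\ref{cor:theBound}, which is then reused for the lower bound on capacity and for the harmonic-sum estimates, whereas you apply Talagrand and Jensen directly to $\log F$; that modularity is what the paper's formulation buys. Three points worth tightening in your write-up: Theorem~\ref{thm:TalConcIneq} needs convexity, not just bounded differences, so you should state that $\log F$ is convex because $F$ is a positive sum of exponentials of affine functions of the $J_{ij}$ (log-convexity is preserved under addition); the chain ``$Z_{\beta,N}\,\capa\leq F/N$ and $F$ at $R_N\equiv0$ equals $2N\tilde Z_{\beta,N}\,\capa^{\CW}$'' carries a factor-of-two bookkeeping slip---symmetrisation gives $Z_{\beta,N}\,\capa\leq 2F/N$ with the $R_N\equiv0$ value equal to $N\tilde Z_{\beta,N}\,\capa^{\CW}$, though the product of the two is unchanged; and the identification $\sum_{\sigma\sim\sigma'}\eee^{-\beta\max(\tilde H_N(\sigma),\tilde H_N(\sigma'))}\bigl[h^*_{\CW}(m(\sigma))-h^*_{\CW}(m(\sigma'))\bigr]^2=N\tilde Z_{\beta,N}\,\capa^{\CW}(\Ss_N[m_1],\Ss_N[m_2])$ tacitly uses the lumping argument the paper invokes explicitly at the end of its proof.
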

\begin{theorem}\label{thm:lowerB}
	For any $m_1\neq  m_2 \in \GN$ and any $s>0$, there exist absolute constants $k_1,k_2>0$ such that  
	\begin{equation}
		\PP_J\,\left(
		\frac{Z_{\beta,N}\,\capa\left(\Ss_N[m_1],\Ss_N[m_2]\right)}{ \tilde{Z}_{\beta,N}\, \capa^{\text{CW}}\left(\Ss_N[m_1],\Ss_N[m_2]\right)}
		\geq  \eee^{-\left(s+2\beta(1+h) + \alpha\right)}(1+o(1)) \right) \geq 1-k_1\eee^{-k_2 s^2},
	\end{equation}
	asymptotically as $N \to \infty$, where $\alpha$ is defined in \eqref{eq:alpha_k}.
\end{theorem}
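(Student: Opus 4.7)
\emph{Strategy.} The target is a matching companion to Theorem~\ref{thm:upperB}, and the natural tool is the Thomson (dual) variational principle: for any unit flow $\phi$ from $A=\Ss_N[m_1]$ to $B=\Ss_N[m_2]$ on the single-spin-flip graph,
\begin{equation*}
	\capa(A,B) \,\geq\, \left(\sum_{e=(\si,\si')} \frac{\phi(e)^2}{c(e)}\right)^{-1},\qquad c(e)=\mu_{\beta,N}(\si)\,p_N(\si,\si').
\end{equation*}
The plan is to insert the (essentially) optimal CW flow $\tilde\phi^{\ast}$ and then compare the RDCW conductances $c(e)$ edge by edge with their CW counterparts $\tilde c(e)=\tilde\mu_{\beta,N}(\si)\,\tilde p_N(\si,\si')$. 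Because the CW Hamiltonian depends on $\si$ only through $m(\si)$, the optimal flow $\tilde\phi^{\ast}$ can be symmetrised so that it depends only on $m(\si)$ and on the direction of the spin flip, and it satisfies $\sum_e(\tilde\phi^{\ast}(e))^2/\tilde c(e)=1/\capa^{\CW}(A,B)\,(1+o(1))$, since the capacity of the full CW chain reduces to that of the magnetisation chain with rates $\tilde r_N$ in~\eqref{eq:rates_r}.

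Using $\max(a,b)=a+[b-a]_+$, the edge-wise ratio decomposes as
\begin{equation*}
	\frac{\tilde c(e)}{c(e)} \,=\, \frac{Z_{\beta,N}}{\tilde Z_{\beta,N}}\,\exp\!\Bigl(-\beta R_N(\si)+\beta\bigl([H_N(\si')-H_N(\si)]_+-[\tH(\si')-\tH(\si)]_+\bigr)\Bigr),
\end{equation*}
where $R_N(\si):=H_N(\si)-\tH(\si)=-\tfrac{1}{Np}\sum_{i<j}(J_{ij}-p)\si_i\si_j$. The Metropolis correction is bounded by $2\beta(1+h)$ using the deterministic CW bound $|\tH(\si')-\tH(\si)|\leq 2(1+h)$ together with a matching high-probability bound for the RDCW analogue that follows from Hoeffding applied to the local field $\tfrac{1}{Np}\sum_{j\neq k}J_{kj}\si_j$. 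The bulk term $-\beta R_N(\si)$ is a sum of independent centred contributions of magnitude $O(1/(Np))$ and total log-MGF bounded by $\alpha=\beta^2(1-p)/(4p)$, so Talagrand's inequality (Theorem~\ref{thm:TalConcIneq}) applied to the Lipschitz functional $J\mapsto R_N(\si)$ yields $-\beta R_N(\si)\leq \alpha+s$ with probability at least $1-k_1\eee^{-k_2 s^2}$, with constants $k_1,k_2$ independent of $\si$ and $N$.

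\emph{Main obstacle and conclusion.} The delicate point is that the bound $\tilde c(e)/c(e)\leq \eee^{s+2\beta(1+h)+\alpha}\,Z_{\beta,N}/\tilde Z_{\beta,N}$ required for Thomson must hold simultaneously on every edge $e$ carrying mass under $\tilde\phi^{\ast}$, and a naive union bound over the exponentially many configurations in $\Ss_N[m]$ is far too expensive. The plan is to avoid such a union bound by exploiting the structure of the flow: the aggregated quantity $\sum_{e}(\tilde\phi^{\ast}(e))^2/c(e)$ is itself a Lipschitz functional of the disorder $\{J_{ij}\}$, whose expectation is controlled by Jensen together with the MGF bound above, and a single application of Talagrand's concentration inequality at the aggregated level produces the desired Gaussian tail directly. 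Inverting this bound yields
\begin{equation*}
	\capa(A,B) \,\geq\, \eee^{-(s+2\beta(1+h)+\alpha)}\,\frac{\tilde Z_{\beta,N}}{Z_{\beta,N}}\,\capa^{\CW}(A,B)\,(1+o(1))
\end{equation*}
on an event of probability at least $1-k_1\eee^{-k_2 s^2}$, matching the statement.
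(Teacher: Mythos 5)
Your overall plan coincides with the paper's: both use the Thomson principle with a flow $\Psi_N$ that is constant on magnetisation shells (your ``symmetrised'' optimal CW flow is essentially the paper's explicit $\phi_N$ from \eqref{eq:defPhi}), both isolate a deterministic factor for the Metropolis correction, and both try to avoid a union bound over $\Ss_N[m]$ by concentrating an aggregated random quantity via Talagrand. However, two steps in the proposal do not go through as written.

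First, the concentration step. You assert that the aggregated quantity $\sum_e\tilde\phi^{\ast}(e)^2/c(e)$ is ``itself a Lipschitz functional of the disorder,'' and that a single application of Theorem~\ref{thm:TalConcIneq} to it gives the Gaussian tail. After the Metropolis ratio and the deterministic $\tilde c$ factors are extracted, what remains is $\sum_m g(m)\sum_{\si\in\Ss_N[m]}\eee^{\beta\Delta(\si)}$ (a sum of exponentials in $J$), and such a sum is \emph{not} Lipschitz in $J$ with a useful constant: its $J_{ij}$--gradient is of the same order as the sum itself times $\beta/(Np)$, so the $\ell_2$--Lipschitz constant scales with the (random and exponentially large) value of the sum. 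What \emph{is} Lipschitz, with the clean constant $\beta/(Np\sqrt 2)$, is the normalised free energy $F_{N,g}=\tfrac1N\log\mathcal Z_{N,g}$. This is exactly the object to which the paper applies Talagrand (Proposition~\ref{pro:Ysubg}), combined with the first--moment bound on $p_{N,g}=\EE F_{N,g}$ (Lemma~\ref{lem:upper_p}, the ``Jensen + MGF'' step you sketch). Those two ingredients are then packaged as Corollary~\ref{cor:theBound} (in the $\eee^{+\beta\Delta}$ form, per Remark~\ref{rem:-beta}), which is the device that actually replaces the union bound. As written, your proposal conflates concentration of the sum with concentration of its logarithm, and the former is false.

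Second, the Metropolis correction. You invoke ``Hoeffding applied to the local field'' to get a high--probability bound on $\abs{\tfrac{2}{N}\sum_{j\neq k}J_{kj}\si_j}$. This is both unnecessary and counterproductive: because $0\le J_{ij}\le 1$, the bound $\abs{\tfrac{2}{N}\sum_{j\neq k}J_{kj}\si_j}\le 2$ is deterministic, and the paper's Lemma~\ref{lem:claim1} exploits exactly this (together with $h>0$) to obtain a \emph{pointwise} bound $\eee^{2\beta(1+h)}$ on the sum of ratios $\sum_{\si'\sim\si}\exp(-\beta[\tH(\si')-\tH(\si)]_+ + \beta[H_N(\si')-H_N(\si)]_+)$, uniformly in $\si$ and in the disorder. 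If you instead use a probabilistic bound, you would need it to hold simultaneously on every flowed edge, reintroducing precisely the union--bound problem you set out to avoid. Replacing your Hoeffding step by the deterministic Lemma~\ref{lem:claim1}, and your ``Lipschitz sum'' step by Proposition~\ref{pro:Ysubg} and Lemma~\ref{lem:upper_p} applied to $\tfrac1N\log$ of the aggregate (Corollary~\ref{cor:theBound}), turns the proposal into the paper's argument.
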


We state asymptotic 
upper and lower bounds on the sum over the harmonic function in the numerator of \eqref{eq:EtauCap} in the following proposition. We used the simplified notation 
\begin{equation}
	h^N_{m_-,m_+}=h_{\Ss_N[m_-(N)],\Ss_N[m_+(N)]}.  
\end{equation}
\begin{theorem} \label{thm:upperHarm}
	For any $s>0$, there exist absolute constants $k_1,k_2>0$ such that  
	
	\begin{equation}\label{eq:upperHarm}
		\PP_J\left(	\sum_{\si \in \Ss_N} \mu_{\beta,N}(\si)   h^N_{m_-,m_+}(\si) \leq  \eee^{\alpha+s} \, \frac{\exp\big(-\beta N f_{\beta} (m_-)\big) \,\big(1+o(1)\big) } {Z_{\beta,N} \sqrt{\left(1-m_-^2\right) \, \beta  f''_{\beta} (m_-) }}\right) 
		\geq 1-k_1\eee^{-k_2s^2},
	\end{equation}
	\begin{equation}
		\PP_J\left(		\sum_{\si \in \Ss_N} \mu_{\beta,N}(\si)  h^N_{m_-,m_+}(\si) \geq
		\eee^{\kappa-s}\, \frac{\exp\big(-\beta N f_{\beta} (m_-)\big) \,\big(1+o(1)\big)} {Z_{\beta,N} \sqrt{\left(1-m_-^2\right) \, \beta  f''_{\beta} (m_-) }}  \right) 
		\geq 1-k_1\eee^{-k_2s^2},
	\end{equation}
	asymptotically as $N \to \infty$, and where $\alpha$ and $\kappa$ are defined in \eqref{eq:alpha_k}.
\end{theorem}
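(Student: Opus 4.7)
The plan is to compute $S := \sum_{\si \in \Ss_N}\mu_{\beta,N}(\si)\,h^N_{m_-,m_+}(\si)$ by Laplace-style concentration around the metastable minimum $m_-$, replacing at each magnetization level the random RDCW weight by the deterministic CW weight, with Talagrand-controlled errors. First I would decompose
\begin{equation*}
S \;=\; \frac{1}{Z_{\beta,N}}\sum_{m\in\GN}\sum_{\si\in\Ss_N[m]}\eee^{-\beta H_N(\si)}\,h^N_{m_-,m_+}(\si),
\end{equation*}
observe that the perturbation $X(\si):=H_N(\si)-\tH(\si)=-\tfrac{1}{Np}\sum_{i<j}(J_{ij}-p)\si_i\si_j$ is a Lipschitz function of the independent couplings with $\EE_J[\eee^{-\beta X(\si)}]=\eee^{\alpha}(1+o(1))$ for $\alpha=\beta^2(1-p)/(4p)$, and apply Talagrand's inequality (Theorem~\ref{thm:TalConcIneq}), in the same spirit as in Theorems~\ref{thm:upperB} and~\ref{thm:lowerB}, to obtain with probability at least $1-c_1\eee^{-c_2 s^2}$ a two-sided comparison of mesoscopic masses of the form
\begin{equation*}
\eee^{-\alpha-s}\,\frac{\tilde Z_{\beta,N}}{Z_{\beta,N}}\,\ti\Qq_{\beta,N}(m)\;\le\;\mu_{\beta,N}(\Ss_N[m])\;\le\;\eee^{\alpha+s}\,\frac{\tilde Z_{\beta,N}}{Z_{\beta,N}}\,\ti\Qq_{\beta,N}(m),
\end{equation*}
with $\ti\Qq_{\beta,N}(m)$ explicit via Lemma~\ref{lem:expfN}. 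A union bound over the $N+1$ values of $m$ is absorbed into the absolute constants $k_1, k_2$.

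For the upper bound I would use $h^N_{m_-,m_+}\le 1$ and show that the contribution from $m>m^*(N)$ is negligible: from any $\si$ with $m(\si)>m^*(N)$ the projected chain must recross the $O(N)$-large free-energy saddle to reach $\Ss_N[m_-(N)]$, so $h^N_{m_-,m_+}(\si)\le \eee^{-cN}$ by a standard capacity comparison. For $m\le m^*(N)$ the upper half of the two-sided control and a Gaussian/Laplace summation around $m_-$ based on Lemma~\ref{lem:expfN} and the lattice spacing $2/N$ of $\GN$ give $\sum_{m\le m^*(N)}\ti\Qq_{\beta,N}(m)=\eee^{-\beta N f_{\beta}(m_-)}/\bigl[\tilde Z_{\beta,N}\sqrt{(1-m_-^2)\,\beta f''_{\beta}(m_-)}\bigr](1+o(1))$; the factor $\tilde Z_{\beta,N}$ cancels against the ratio $\tilde Z_{\beta,N}/Z_{\beta,N}$ in the two-sided control, yielding the stated upper bound with constant $\eee^{\alpha+s}$.

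For the lower bound I would restrict $S$ to configurations with $m(\si)\in I_\delta:=[m_-(N)-\delta, m_-(N)+\delta]$ for a small $\delta<m^*-m_-$. On $I_\delta$ the barrier $f_\beta(m^*)-f_\beta(m)$ is bounded below by a positive constant, so a direct birth--death hitting-probability estimate for the projected chain gives $h^N_{m_-,m_+}(\si)\ge 1-\eee^{-cN}$ uniformly in $J$ on a high-probability event. To lower-bound $\sum_{\si\in\Ss_N[m]}\eee^{-\beta H_N(\si)}$ a plain inversion of the upper-tail Talagrand bound is too loose; instead, for each fixed $\eta\in(0,1)$ I would use Talagrand to single out, on a high-probability event, a sub-fraction of at least $\eta|\Ss_N[m]|$ configurations on which $X(\si)$ does not exceed the threshold $t(\eta)=(p\sqrt{2c_2})^{-1}\sqrt{2\alpha+\log(c_1/(1-\eta)^2)}$. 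Restricting the sum to this sub-fraction produces a lower bound with factor $\eta\cdot\eee^{\alpha-\beta t(\eta)}$ in place of $\eee^\alpha$; optimizing over $\eta$ gives exactly $\eee^\kappa$ as in \eqref{eq:alpha_k}. Laplace summation over $I_\delta$ (whose width can be chosen much larger than $1/\sqrt N$) reproduces the same $f''_\beta(m_-)$ prefactor and concludes the lower bound.

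The main obstacle is the lower bound: since Talagrand-type concentration is essentially one-sided and must be invoked both to upper-bound $Z_{\beta,N}$ and to lower-bound the local partition sums $\sum_{\si\in\Ss_N[m]}\eee^{-\beta H_N(\si)}$, a direct inversion of the upper-tail bound is too coarse; the selection of a good fraction $\eta$ of configurations is what restores a matching lower bound, at the cost of the optimization penalty $\max_\eta\{\log\eta-\beta t(\eta)\}$, which produces the strictly worse constant $\kappa<\alpha$. A secondary technical point is the uniform-in-$J$ control of $h^N_{m_-,m_+}\approx 1$ on $I_\delta$ and $\approx 0$ beyond $m^*(N)$; this rests on the RDCW free-energy landscape remaining, with high probability, close enough to the CW one that the double-well structure and the $O(1)$ barrier height survive.
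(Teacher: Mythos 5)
Your proposal gets the right scaffolding: split $\GN$ into a neighbourhood of $m_-$, a neighbourhood of the saddle, and the well of $m_+$; compare the RDCW mesoscopic weight to the CW one via Talagrand to pick up the factors $\eee^\alpha$, $\eee^\kappa$; and do a Laplace summation around $m_-$ to produce $\sqrt{(1-m_-^2)\beta f''_\beta(m_-)}$. The critical gap is in the region $m(\si)>m^*(N)$. You claim $h^N_{m_-,m_+}(\si)\le\eee^{-cN}$ uniformly there ``by a standard capacity comparison,'' but this is false for $m(\si)$ just above $m^*(N)$: sitting essentially on top of the saddle, the probability of sliding down to $\Ss_N[m_-(N)]$ is of order one, not exponentially small. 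Moreover the paper explicitly states, at the start of Section~\ref{sec:harmonicSum}, that the usual renewal/capacity argument for bounding the harmonic function is \emph{not} available in the RDCW model because single-spin capacities are too small. The real difficulty is the well around $m_+$, where $\Qq_{\beta,N}(m)$ is exponentially \emph{larger} than $\Qq_{\beta,N}(m_-)$ and must be offset by an $m$-dependent exponential decay of $h^N_{m_-,m_+}$; this is the content of Lemma~\ref{lem:hittingP}, proved via an explicit superharmonic test function $\psi(\si)=\exp(\beta N(1-\gamma)f_\beta(m(\si)))$ and Doob's optional stopping theorem (Lemmas~\ref{lem:hitting6.3}, \ref{lem:superHarm}), plus a combinatorial path-counting bound near $m_+$ (Lemma~\ref{lem:hitting6.11}). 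That machinery is the bulk of the proof, and your proposal asserts rather than establishes the estimate it replaces. The same objection applies to your lower-bound step ``a direct birth--death hitting-probability estimate for the projected chain'': for RDCW the magnetisation process is \emph{not} Markovian, so birth--death formulas are unavailable and the paper again uses the superharmonic approach (only sketched in Section~\ref{sec:lowerHarm}).

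Two secondary issues. First, a per-$m$ Talagrand bound followed by ``a union bound over the $N+1$ values of $m$'' does not leave $k_1,k_2$ absolute: it degrades $s$ by $\sqrt{\log N}$. The paper avoids this by applying Talagrand once to the entire weighted sum $\mathcal{Z}_{N,g}=\sum_m g(m)\sum_{\si\in\Ss_N[m]}\eee^{-\beta\Delta(\si)}$ through $F_{N,g}=\tfrac1N\log\mathcal{Z}_{N,g}$ (Proposition~\ref{pro:Ysubg}, Corollary~\ref{cor:theBound}), once for each of the finitely many weight functions $g$ it needs. Second, your $\eta$-fraction selection idea lands on the correct constant $\kappa$, but the claimed factor $\eta\eee^{\alpha-\beta t(\eta)}$ does not follow from that argument, which by itself only gives $\eta\eee^{-\beta t(\eta)}$ with no $\alpha$; the paper obtains $\kappa$ from the Paley--Zygmund inequality applied to $\mathcal{Z}_{N,g}$ together with a second-moment computation (Lemma~\ref{lem:lower_p}), and that is where the $\alpha$ in the exponent actually comes from.
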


We conclude this section using Theorems~\ref{thm:upperB}-\ref{thm:upperHarm}, 
to prove the main theorem. First, we introduce the following notation which will be extensively 
used: 
\begin{equation}\label{eq:Psnotation}
	A\overset{P(s)}{\gtreqless}B
	\qquad \text{ is equivalent to } \qquad
	\PP_{J}(A\gtreqless B)\geq 1-k_1 \eee^{-k_2 s^2},
\end{equation}
for all $s>0$ and for some absolute constants $k_1,k_2>0$, whose values might change along the paper.

\begin{proof}[Proof of Theorem~\ref{teo:exptime} ]
	We prove here only the upper bound, as the lower bound follows similarly. 
	More precisely, we prove
	\begin{equation}
		\frac{ \EE_{\nu^N_{m_-,m_+}} \left[\tau_{\Ss_N[m_+(N)]}\right]}{\EE^{\text{CW}}_{m_-(N)}\left[\tau_{m_+(N)}\right]}
		\overset{P(s)}{\leq} C_2  \eee^{s}.
	\end{equation}
	We start from \eqref{eq:EtauCap}, which in our case reads
	\begin{equation}\label{eq:EtauCap1}
		\EE_{\nu^N_{m_-,m_+}} \left[\tau_{\Ss_N[m_+(N)]}\right]
		= \frac{\sum_{\si \in \Ss_N}\mu_{\beta,N}(\si)  h^N_{m_-,m_+}(\si) }{\text{cap}\left(\Ss_N[m_-(N)],\Ss_N[m_+(N)]\right)} .
	\end{equation}
	From \eqref{eq:upperHarm} we obtain
	\begin{equation}
		\EE_{\nu^N_{m_-,m_+}} \left[\tau_{\Ss_N[m_+(N)]}\right]
		\overset{P(s)}{\leq} \frac{\eee^{\alpha+s} \, \exp\big(-\beta N f_{\beta} (m_-)\big)(1+o(1)) }
		{Z_{\beta,N}\,\text{cap}(\Ss_N[m_-(N)],\Ss_N[m_+(N)]) \sqrt{\left(1-m_-^2\right) \beta  f''_{\beta} (m_-) }}.
	\end{equation}

	Via the lower bound on the capacity from Theorem~\ref{thm:lowerB}, we  obtain
	\begin{equation}
		\begin{split}
			&\EE_{\nu^N_{m_-,m_+}} \left[\tau_{\Ss_N[m_+(N)]}\right]\\
			& \quad \overset{P(s)}{\leq} \eee^{2s+2\beta(1+h) + 2\alpha}\,
			\sqrt{\frac{1-m^*}{1+m^*}} \,\frac{\pi \, N\, \exp\left(\beta N \left[f_{\beta}(m^*)-f_{\beta} (m_-)\right]\right)
			}{ \beta\sqrt{\left(1-m_-^2\right)   f''_{\beta} (m_-) \left(-f_{\beta}''(m^*)\right) }}(1+o(1))    \\
			&\quad =\eee^{2s+2\beta(1+h) + 2\alpha} \, \EE^{\text{CW}}_{m_{-}(N)}[\tau_{m_{+}(N)}],
		\end{split}
	\end{equation}
	where we used \eqref{eq:capaCW}	and Theorem~\ref{teo:cw}.	
\end{proof}

\subsection{Outline} The remainder of this paper is organised as follows. In Section~\ref{sec:two} we use the powerful 
\emph{Talagrand's concentration inequality} to obtain bounds on the equilibrium measure of the RDCW model.
These bounds allow us to write the RDCW \emph{mesoscopic} measure in terms of the deterministic CW one, 
times a random factor which is the exponential of a sub-Gaussian random variable.
In Section~\ref{sec:capacities} we give the proof of  Theorems~\ref{thm:upperB} and \ref{thm:lowerB}   
via two dual variational principles, the Dirichlet and the Thomson principles, which are
the building blocks of the potential theoretic approach to metastability. In obtaining upper and lower 
bounds on the
capacity, the main strategy is to use the results of Section~\ref{sec:two} in order to recover the capacity 
of the CW model. In Section~\ref{sec:harmonicSum}
we prove Theorem~\ref{thm:upperHarm}, i.e. we
compute the asymptotics of the numerator in the formula for the mean hitting time using estimates on the
harmonic function.


\section{Equilibrium analysis via Talagrand's concentration inequality}\label{sec:two}
In this section we prove that the equilibrium mesoscopic measure of the RDCW model
is in fact very close to that of the CW model. This is done in two steps. First,
we prove that the difference between the \emph{random free energy} at fixed magnetisation
and its average can be controlled via \emph{Talagrand's concentration 
	inequality}. Second, we find upper and lower bounds on the aforementioned average 
by estimating first and second moments of the partition function of the RDCW model 
at fixed magnetisation.

\subsection{Mesoscopic measure and closeness to the CW model}
We start by analysing the equilibrium measure of the RDCW model. The aim is to express the   
equilibrium measure $\mu_{\beta, N}$, defined in \eqref{eq:eqmeas},
in terms of the empirical magnetisation in order to obtain a \emph{mesoscopic} description, as we did for the CW 
model in Section~\ref{sec:cwmodel}.
Let us define the measure $\Qq_{\beta,N}$ on $\GN$, and let the partition function be its normalisation
\begin{equation}\label{eq:defQ}
	\Qq_{\beta,N}(\cdot) = \mu_{\beta,N}\circ m^{-1}(\cdot) =\sum_{\si \in \Ss_N[\cdot]} \mu_{\beta,N}(\si),
	\qquad Z_{\beta,N}=  \sum_{m\in\Gamma_N} \Qq_{\beta,N}(m).
\end{equation}
A priori the Hamiltonian of the RDCW model is not only depending on $m$, but it depends of course
on the whole spin configuration. Nonetheless, we will see later in this section that the mesoscopic measure 
$\Qq_{\beta,N}$ can be written in terms of the mesoscopic measure $\ti{\Qq}_{\beta,N}$ 
of the standard CW model.

\begin{equation}\label{eq:EH}
	\EE[H_N(\sigma)]= - \frac{1}{Np} \sum_{i<j}\mathbb{E}[J_{i j}] \sigma_i \sigma_j - h \sum_{i}\sigma_i 
	= - \frac{p}{Np} \sum_{i<j} \sigma_i \sigma_j - h \sum_{i}\sigma_i = \tilde{H}_N(\sigma).
\end{equation}
Therefore, 
we can split the Hamiltonian into the mean-field part and the remaining random part
obtaining
\begin{equation} \label{eq:Hdecomp}
	H_N(\si)=  \EE[H_N(\sigma)] + \Delta_{N,p}(\sigma),
\end{equation}
where, introducing the notation  $\hat{J}_{i j}= J_{i j} - p$, 
\begin{equation}\label{eq:delta}
	\Delta_{N,p}(\sigma) =H_N(\sigma) -\tilde{H}_N(\sigma)=
	-\frac{1}{Np} \sum_{i<j} \hat{J}_{i j}\sigma_i \sigma_j. 
\end{equation}
Note that $\Delta_{N,p}$ is a random variable with zero mean. 
In order to simplify the notation, we drop from now on the dependence on $N$ and $p$, 
from $\Delta_{N,p}$. Next, we  write 
the mesoscopic measure as
\begin{equation} \label{eq:ZbN}
	\Qq_{\beta,N}(m)= 
	\frac{1}{Z_{\beta,N}}\eee^{-\beta N E(m)} \cdot \sum_{\substack{\sigma \in \Ss_N[m]}}\eee^{-\beta  \Delta(\sigma)},
\end{equation}
where $E(m)$ is defined in  \eqref{eq:H_CW}. 

We will now focus on proving  bounds for functions of $\sum_{\substack{\sigma \in \Ss_N[m]}}\eee^{-\beta  \Delta(\sigma)}$ more general than $\Qq_{\beta,N}(m)$. 
These results will be fundamental to prove our main theorem in the following sections. 
We will come back to $\Qq_{\beta,N}$ at the end of this section, 
proving its closeness to the CW correspondent $\tilde\Qq_{\beta,N}$ as a consequence of those general results.

Let us introduce the 
following notation, where we drop the dependence on $\beta$ for simplicity
\begin{align}
	\mathcal{Z}_{N,g}&= \sum_{m \in  \GN } g(m) \sum_{\substack{\sigma \in \Ss_N[m]}}\eee^{-\beta  \Delta(\sigma)}=\exp\left(Np_{N,g}\right)
	\,\exp\left(N \left[F_{N,g}-p_{N,g}\right]\right) , \label{eq:defs1}\\
	F_{N,g}&=\frac{1}{N}\log \mathcal{Z}_{N,g}, \label{eq:defs2}\\
	p_{N,g}&=\mathbb{E}(F_{N,g}), \label{eq:defs3}
\end{align}
where $g:\GN \to [0,\infty)$ is a function which may depend on $N$.

We are interested in finding precise estimates on $\mathcal{Z}_{N,g}$ by writing it in terms of the entropic 
exponential term $\eee^{-NI_N(m)}$ times some random factor which takes into account the randomness of the
couplings. 
We notice that $\mathcal{Z}_{N,g}$ is the product of a deterministic factor $\eee^{Np_{N,g}}$ and a random factor $\eee^{N (F_{N,g}-p_{N,g})}$. 

We first  characterise the random variable $N (F_{N,g}-p_{N,g})$ in the following Proposition.

\begin{proposition}  \label{pro:Ysubg}
	For any $\beta$, $t>0$, 
	\begin{equation}\label{eq:Ysubg}
		\mathbb{P}_J\bigg(\abs{N(F_{N,g}-p_{N,g})} \geq t \bigg)\leq c_1 \exp\bigg(- \gamma t^2\bigg),
	\end{equation}
	where $\gamma \propto \frac{p^2}{\beta^2}$. 
\end{proposition}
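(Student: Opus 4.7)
The plan is to view $N(F_{N,g}-p_{N,g})$ as the centered version of a nice (convex, Lipschitz) function of the independent Bernoulli variables $(J_{ij})_{i<j}$ and to apply Talagrand's convex-Lipschitz concentration inequality (Theorem~\ref{thm:TalConcIneq}) directly. Setting $\hat J = (\hat J_{ij})$, the random part $\Delta(\sigma)$ in \eqref{eq:delta} is affine in the couplings, and $\mathcal{Z}_{N,g}$ is a positive linear combination of exponentials of affine functions of $\hat J$. Thus
\begin{equation*}
   F_{N,g}(\hat J) = \frac{1}{N}\log \sum_{m\in\GN} g(m) \sum_{\sigma\in\Ss_N[m]} \exp\!\Bigl(\tfrac{\beta}{Np}\sum_{i<j}\hat J_{ij}\sigma_i\sigma_j\Bigr)
\end{equation*}
is a log-sum-exp of affine functionals of $\hat J$, hence convex in $\hat J$.

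Next I would estimate the Lipschitz constant of $F_{N,g}$ with respect to the Euclidean norm on the coupling space. Differentiating,
\begin{equation*}
   \frac{\partial F_{N,g}}{\partial \hat J_{kl}} = \frac{\beta}{N^2 p}\,\langle \sigma_k\sigma_l\rangle_{g,\hat J},
\end{equation*}
where $\langle\cdot\rangle_{g,\hat J}$ denotes the tilted expectation defined by the summand in $\mathcal{Z}_{N,g}$. Since $|\sigma_k\sigma_l|\leq 1$, each partial derivative is bounded by $\beta/(N^2p)$, and summing $\binom{N}{2}$ such squares gives
\begin{equation*}
   \|\nabla F_{N,g}\|_2 \leq \frac{\beta}{N^2 p}\sqrt{\binom{N}{2}} \leq \frac{\beta}{\sqrt{2}\,Np}.
\end{equation*}
Equivalently, $N F_{N,g}$ is convex and Lipschitz with constant $L = \beta/(p\sqrt{2})$ as a function of $(\hat J_{ij})\in[-p,1-p]^{\binom{N}{2}}$.

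Now I would invoke Talagrand's concentration inequality for convex Lipschitz functions of independent bounded random variables: there exist absolute constants $c_1,c_2>0$ with
\begin{equation*}
   \PP_J\bigl(\,|NF_{N,g}-M_{N,g}|\geq t\,\bigr) \leq c_1 \exp\!\bigl(-c_2\, t^2/L^2\bigr),
\end{equation*}
where $M_{N,g}$ is a median of $NF_{N,g}$. Replacing the median by the mean $Np_{N,g}$ costs at most an additive shift $|M_{N,g}-Np_{N,g}|$, which, by the same tail bound integrated in $t$, is itself of order $L$ and can be absorbed into the constants $c_1,c_2$. Substituting $L^2 = \beta^2/(2p^2)$ yields the claimed bound with $\gamma \propto p^2/\beta^2$.

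The main delicate point is really just verifying convexity (which is immediate from the log-sum-exp structure since $\Delta$ is linear in $\hat J$) and keeping track of the constant in the Lipschitz bound; the mean-versus-median passage is standard once the tail estimate around the median is in hand. No bound on $g$ is needed since $g$ enters only through positive weights that factor out of the dependence on $\hat J$.
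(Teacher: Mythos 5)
Your proof is correct and takes essentially the same route as the paper: identify $F_{N,g}$ as a convex, Lipschitz function of the $\hat J_{ij}$, compute the Lipschitz constant, and apply Talagrand's convex-Lipschitz concentration inequality. The one substantive difference is that you derive the Lipschitz constant $\beta/(Np\sqrt 2)$ from scratch via the gradient/log-sum-exp computation, whereas the paper simply cites Talagrand's book for both convexity and the Lipschitz bound; your derivation is a useful supplement. One minor inefficiency: you invoke a median-centred version of the inequality and then argue the mean--median shift is absorbable, but the version the paper quotes (Tao, Theorem 2.1.13, reproduced as Theorem~\ref{thm:TalConcIneq}) is already stated around the mean $\mathbb{E}\,G(X)$, so that passage is unnecessary and the bound with $\gamma\propto p^2/\beta^2$ follows directly after rescaling $G=F_{N,g}\,L^{-1}$ with $L=\beta/(Np\sqrt 2)$ and $K=1$.
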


The previous result  intuitively means that the \emph{random} 
$F_{N,g}$  is in fact very well concentrated around its mean $p_{N,g}$.

As a second step we provide asymptotic bounds on the average of $F_{N,g}$, i.e. the \emph{deterministic} term $p_{N,g}$.

\begin{lemma}\label{lem:upper_p}
	Asymptotically, as $N\to\infty$,
	\begin{equation}
		p_{N,g}  \leq \frac{\alpha}{N} +\frac{1}{N}\log \left(\sum_{m \in  \GN } g(m) \exp \big(-NI_N(m)\big)\right) + o\left(\frac{1}{N}\right),
	\end{equation}	
	where $I_N(m)$ is defined in \eqref{eq:defIN} and $\alpha$ in \eqref{eq:alpha_k}.
\end{lemma}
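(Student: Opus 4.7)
The first step is to apply Jensen's inequality to the concave logarithm, yielding
\begin{equation*}
p_{N,g} \;=\; \frac{1}{N}\EE[\log \mathcal Z_{N,g}] \;\leq\; \frac{1}{N}\log \EE[\mathcal Z_{N,g}],
\end{equation*}
so the task reduces to bounding the \emph{annealed} partition function $\EE[\mathcal Z_{N,g}]$. Since $g$ and the sizes $|\Ss_N[m]|=\eee^{-NI_N(m)}$ recorded in the excerpt are deterministic, swapping the finite sums with the expectation reduces the work to estimating $\EE[\eee^{-\beta\Delta(\si)}]$ uniformly in $\si\in\Ss_N$.

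To do so, I would exploit independence of $\{\hat J_{ij}\}_{i<j}$, which factorises the Laplace transform into one-edge contributions. Setting $\phi(s):=\log\EE[\eee^{s\hat J_{12}}] = -sp + \log(1-p+p\eee^s)$, a Taylor expansion at the origin gives $\phi(0)=\phi'(0)=0$ and $\phi''(0)=p(1-p)$, so that, uniformly in $|s|\leq \beta/(Np)$,
\begin{equation*}
\phi(s) \;\leq\; \tfrac{p(1-p)}{2}\, s^2\,(1 + O(1/N)).
\end{equation*}
Plugging $s = \beta\si_i\si_j/(Np)$, using $|\si_i\si_j|=1$, and summing over the $\binom{N}{2}$ edges gives, uniformly in $\si$,
\begin{equation*}
\log\EE[\eee^{-\beta\Delta(\si)}] \;\leq\; \binom{N}{2}\,\tfrac{p(1-p)\beta^2}{2 N^2 p^2}\,(1+O(1/N)) \;=\; \alpha + O(1/N),
\end{equation*}
with $\alpha$ exactly as in \eqref{eq:alpha_k}.

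Combining the two steps with $|\Ss_N[m]|=\eee^{-NI_N(m)}$ yields
\begin{equation*}
\EE[\mathcal Z_{N,g}] \;\leq\; \eee^{\alpha+O(1/N)}\sum_{m\in\GN} g(m)\,\eee^{-NI_N(m)},
\end{equation*}
and taking $\frac{1}{N}\log$ produces the desired bound, the leftover $\frac{1}{N}\cdot O(1/N) = O(1/N^2)$ being absorbed in the stated $o(1/N)$. The only point requiring some care is a uniform control of the cubic and higher order terms in $\phi$; since $|s|\leq\beta/(Np)$, these contribute $\binom{N}{2}\cdot O(|s|^3) = O(1/N)$ to the log, which is negligible after dividing by $N$. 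Note that no concentration inequality is needed here: Lemma~\ref{lem:upper_p} is a statement about the \emph{deterministic} quantity $p_{N,g}$, so Jensen together with the explicit Bernoulli moment generating function suffice, while Proposition~\ref{pro:Ysubg} is required only later to control deviations of $F_{N,g}$ from $p_{N,g}$.
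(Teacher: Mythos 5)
Your proposal is correct and follows essentially the same route as the paper's proof: Jensen's inequality to reduce to the annealed first moment, followed by factoring the Laplace transform over independent edges and Taylor-expanding the cumulant generating function of $\hat J_{ij}$ at the origin with $|s|=\beta/(Np)$. The only cosmetic difference is that you work directly with $\phi=\log\Phi$ and track the error as a uniform $O(1/N)$ relative bound, whereas the paper exponentiates $\Phi$ and writes $o(1)$ in the exponent; both give $\EE[\mathcal Z_{N,g}]=\eee^{\alpha+o(1)}\sum_m g(m)\eee^{-NI_N(m)}$ and the lemma follows.
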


\begin{lemma}\label{lem:lower_p}
	Asymptotically, as $N\to\infty$,
	\begin{equation}
		p_{N,g}  \geq  \frac{\kappa}{N} +\frac{1}{N}\log \left(\sum_{m \in  \GN } g(m) \exp \big(-NI_N(m)\big)\right) + o\left(\frac{1}{N}\right),
	\end{equation} 	
	where $I_N(m)$ is defined in \eqref{eq:defIN} and $\kappa$ in \eqref{eq:alpha_k}.
\end{lemma}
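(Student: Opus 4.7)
The plan is to combine a second-moment/Paley--Zygmund argument for $\mathcal{Z}_{N,g}$ with the Gaussian concentration of Proposition~\ref{pro:Ysubg}. Jensen's inequality produces only the upper bound of Lemma~\ref{lem:upper_p}, so for the lower bound I would exploit that Paley--Zygmund forces $\mathcal{Z}_{N,g}$ to be of the order of $\EE[\mathcal{Z}_{N,g}]$ with probability bounded away from zero, while Proposition~\ref{pro:Ysubg} forces $\tfrac{1}{N}\log\mathcal{Z}_{N,g}$ to lie within $O(1/N)$ of $p_{N,g}$ with high probability. If $p_{N,g}$ were much smaller than $\tfrac{1}{N}\log\EE[\mathcal{Z}_{N,g}]$, these two events would be incompatible.

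First I would compute $\EE[\mathcal{Z}_{N,g}]$: expanding the centered-Bernoulli log-MGF $\log\EE[\eee^{t\hat{J}}] = p(1-p)t^2/2 + O(t^3)$ at $t = \pm\beta/(Np)$ and summing the $\binom{N}{2}$ independent contributions gives $\EE[\eee^{-\beta\Delta(\sigma)}] = \eee^{\alpha}(1+o(1))$ uniformly in $\sigma$, so
\begin{equation}
\EE[\mathcal{Z}_{N,g}] = \eee^{\alpha}(1+o(1))\sum_{m\in\GN} g(m)\eee^{-NI_N(m)}.
\end{equation}
For the second moment, using the identity $\sum_{i<j}(\sigma_i\sigma_j+\sigma_i'\sigma_j')^2 = N^2(1+q^2)-2N$ with overlap $q=\tfrac{1}{N}\sum_i\sigma_i\sigma_i'$, the same expansion gives $\EE[\eee^{-\beta(\Delta(\sigma)+\Delta(\sigma'))}] \leq \eee^{2\alpha(1+q^2)}(1+o(1))$; since $q^2\leq 1$, summing against $g(m(\sigma))g(m(\sigma'))$ yields $\EE[\mathcal{Z}_{N,g}^2] \leq \eee^{2\alpha}(1+o(1))\EE[\mathcal{Z}_{N,g}]^2$. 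The Paley--Zygmund inequality then gives, for every $\eta\in(0,1)$,
\begin{equation}
\PP_J\bigl(\mathcal{Z}_{N,g}\geq \eta\,\EE[\mathcal{Z}_{N,g}]\bigr) \geq (1-\eta)^2 \eee^{-2\alpha}(1+o(1)).
\end{equation}

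Next I would combine this with the upper tail of Proposition~\ref{pro:Ysubg}, namely $\PP_J(NF_{N,g} \geq Np_{N,g}+t) \leq c_1 \eee^{-2c_2 p^2 t^2/\beta^2}$ (reading $\gamma = 2c_2 p^2/\beta^2$ so as to match the form of $\kappa$). Suppose for contradiction that $Np_{N,g} + t < \log(\eta\EE[\mathcal{Z}_{N,g}])$; then on the Paley--Zygmund event $NF_{N,g} \geq \log(\eta\EE[\mathcal{Z}_{N,g}]) > Np_{N,g}+t$, so that event is contained in $\{NF_{N,g} > Np_{N,g}+t\}$. Comparing the two probability bounds forces $t \leq \tfrac{\beta}{p\sqrt{2c_2}}\sqrt{2\alpha + \log(c_1/(1-\eta)^2)}$, and for any strictly larger $t$ we conclude $Np_{N,g}+t \geq \log(\eta\EE[\mathcal{Z}_{N,g}])$. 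Letting $t$ approach the threshold and substituting the first-moment asymptotics yields
\begin{equation}
p_{N,g} \geq \frac{1}{N}\biggl[\alpha + \log\eta - \frac{\beta\sqrt{2\alpha+\log(c_1/(1-\eta)^2)}}{p\sqrt{2c_2}}\biggr] + \frac{1}{N}\log\sum_{m\in\GN} g(m)\eee^{-NI_N(m)} + o(1/N),
\end{equation}
and optimizing over $\eta\in(0,1)$ produces exactly the constant $\kappa$ of \eqref{eq:alpha_k}.

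The main technical point will be the second-moment estimate: one must check that the overlap-dependent factor $\eee^{2\alpha q^2}$, when averaged against $g(m(\sigma))g(m(\sigma'))$, does not concentrate on $q\approx\pm 1$ in a way that enlarges the effective constant beyond $2\alpha$. The crude bound $q^2\leq 1$ is precisely what produces the $2\alpha$ inside $\kappa$, so no finer overlap analysis is required. One should also track the $(1+o(1))$ corrections uniformly in $m\in\GN$, in particular near $m=\pm 1$ where $I_N$ is singular; this can be done by the same pattern as the Jensen-based upper bound in Lemma~\ref{lem:upper_p}.
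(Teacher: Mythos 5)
Your proposal is correct and follows essentially the same route as the paper: first-moment asymptotics $\EE[\mathcal{Z}_{N,g}]=\eee^{\alpha+o(1)}\sum_m g(m)\eee^{-NI_N(m)}$, the second-moment bound $\EE[\mathcal{Z}_{N,g}^2]\leq\eee^{2\alpha}\EE[\mathcal{Z}_{N,g}]^2(1+o(1))$, Paley--Zygmund, and the Talagrand concentration of $NF_{N,g}$ around $Np_{N,g}$, finally optimizing over $\eta$. The only cosmetic differences are that you write the second-moment bound via the overlap identity $\sum_{i<j}(\sigma_i\sigma_j+\sigma_i'\sigma_j')^2=N^2(1+q^2)-2N$ followed by $q^2\leq 1$, whereas the paper bounds $(\sigma_i^{(1)}\sigma_j^{(1)}+\sigma_i^{(2)}\sigma_j^{(2)})^2\leq 4$ termwise in the product (these are equivalent); and that you phrase the combination of the two probability bounds as a proof by contradiction, whereas the paper phrases it as the observation that the intersection of the two events is non-empty and is contained in a \emph{deterministic} event, which must therefore hold identically.
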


Proposition~\ref{pro:Ysubg} together with Lemmas~\ref{lem:upper_p} and \ref{lem:lower_p} imply
the following result.
\begin{proposition}\label{pro:ZNg}
	Asymptotically, as $N\to\infty$, we have
	\begin{equation} \label{eq:Z_upper}
		\mathcal{Z}_{N,g}\leq  \eee^{\alpha} \left(\sum_{m \in  \GN } g(m) \exp \big(-NI_N(m)\big)\right) \exp\big[N(F_{N,g}-p_{N,g})\big]\left(1+o(1)\right),
	\end{equation}
	and
	\begin{equation} \label{eq:Z_lower}
		\mathcal{Z}_{N,g}\geq \eee^{\kappa}  \left(\sum_{m \in  \GN } g(m) \exp \big(-NI_N(m)\big)\right) \exp\big[N(F_{N,g}-p_{N,g})\big]\left(1+o(1)\right),
	\end{equation}
	where $	\mathcal{Z}_{N,g}$ is defined in \eqref{eq:defs1}, $\alpha$ and $\kappa$ in \eqref{eq:alpha_k}, and $I_N(m)$ in \eqref{eq:defIN}.
	Moreover,\break $N(F_{N,g}-p_{N,g})$ is a sub-Gaussian random variable with variance
	\begin{equation}
		\var\big[N(F_{N,g}-p_{N,g}) \big] \leq \frac{c \, \beta^2}{p^2},
	\end{equation}
	where $c$ is a positive constant.
\end{proposition}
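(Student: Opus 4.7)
The plan is to \emph{assemble} this proposition directly from the three preceding results: the sub-Gaussian concentration estimate of Proposition~\ref{pro:Ysubg} and the one-sided deterministic bounds on $p_{N,g}$ given by Lemmas~\ref{lem:upper_p} and \ref{lem:lower_p}. No additional probabilistic or combinatorial work should be required here; the heavy lifting (Talagrand's inequality, first and second moment computations of the partition function at fixed magnetisation) is already packaged in those statements.

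First I would start from the factorisation already recorded in \eqref{eq:defs1},
\begin{equation}
\mathcal{Z}_{N,g} = \exp\bigl(Np_{N,g}\bigr)\,\exp\bigl(N[F_{N,g}-p_{N,g}]\bigr).
\end{equation}
Exponentiating the upper bound of Lemma~\ref{lem:upper_p} gives
\begin{equation}
\exp(Np_{N,g}) \leq \eee^{\alpha} \left(\sum_{m\in\GN} g(m)\,\eee^{-NI_N(m)}\right)(1+o(1)),
\end{equation}
and multiplication by the random factor $\exp(N[F_{N,g}-p_{N,g}])$ yields \eqref{eq:Z_upper}. The lower bound \eqref{eq:Z_lower} is obtained identically from Lemma~\ref{lem:lower_p}, with $\alpha$ replaced by $\kappa$.

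For the sub-Gaussian statement, Proposition~\ref{pro:Ysubg} already supplies the tail bound $\PP_J(|N(F_{N,g}-p_{N,g})|\geq t) \leq c_1\exp(-\gamma t^2)$ with $\gamma \propto p^2/\beta^2$. Since $N(F_{N,g}-p_{N,g})$ is centred by the very definition $p_{N,g}=\EE[F_{N,g}]$, its variance coincides with its second moment, which I would bound using the layer-cake identity
\begin{equation}
\var\bigl[N(F_{N,g}-p_{N,g})\bigr] = \int_0^\infty 2t\,\PP_J\bigl(|N(F_{N,g}-p_{N,g})|\geq t\bigr)\,\dd t \leq \int_0^\infty 2t\,c_1\,\eee^{-\gamma t^2}\,\dd t = \frac{c_1}{\gamma},
\end{equation}
which is exactly of the form $c\,\beta^2/p^2$.

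I expect no serious obstacle in this proof: everything reduces to exponentiating, multiplying, and integrating a Gaussian tail. The only care needed is bookkeeping of the multiplicative $(1+o(1))$ error terms produced by Lemmas~\ref{lem:upper_p} and \ref{lem:lower_p} when they are combined with the random factor, and making sure the constants $\alpha$, $\kappa$ inherited from \eqref{eq:alpha_k} are transported without loss through the argument.
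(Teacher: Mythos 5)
Your proof is correct and matches the paper's (implicit) argument exactly: the paper simply asserts that Proposition~\ref{pro:Ysubg} together with Lemmas~\ref{lem:upper_p} and \ref{lem:lower_p} imply the proposition, and you have filled in precisely the routine steps — exponentiating the bounds on $p_{N,g}$, multiplying through by the random factor using the factorisation in \eqref{eq:defs1}, and deriving the variance bound from the sub-Gaussian tail of \eqref{eq:Ysubg_b} via the layer-cake identity.
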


We prove Proposition~\ref{pro:Ysubg}  in Section~\ref{sec:proof_Ysubg},  and Lemmas~\ref{lem:upper_p} and \ref{lem:lower_p} in Section~\ref{sec:proof_pNm}.

We are ready to state the main result of this section, as a corollary of Proposition~\ref{pro:Ysubg}  and Proposition~\ref{pro:ZNg}.
\begin{corollary}\label{cor:theBound}
	Asymptotically, as $N\to\infty$, using notation \eqref{eq:Psnotation}, the following bounds hold for any $\beta>0$ and any function  $g: \GN \to [0,\infty)$ 
	\begin{equation} \label{eq:upperBound}
		\sum_{m \in  \GN } g(m) \sum_{\substack{\sigma \in \Ss_N[m]}}\eee^{-\beta  \Delta(\sigma)} \overset{P(s)}{\leq}  \eee^{s+\alpha} \left(\sum_{m \in  \GN } g(m) \exp \big(-NI_N(m)\big)\right) \left(1+o(1)\right),
	\end{equation}
	\begin{equation}\label{eq:lowerBound} 
		\sum_{m \in  \GN } g(m) \sum_{\substack{\sigma \in \Ss_N[m]}}\eee^{-\beta  \Delta(\sigma)}\overset{P(s)}{\geq} \eee^{-s+\kappa}  \left(\sum_{m \in  \GN } g(m) \exp \big(-NI_N(m)\big)\right) \left(1+o(1)\right),
	\end{equation}
	where $\alpha$ and $\kappa$ are defined in \eqref{eq:alpha_k}, $I_N(m)$ in \eqref{eq:defIN} and $\Delta(\sigma)$ in \eqref{eq:delta}.
\end{corollary}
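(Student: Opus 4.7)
The plan is direct: the corollary is essentially a packaging result obtained by combining the deterministic sandwich for $\mathcal{Z}_{N,g}$ in Proposition~\ref{pro:ZNg} with the sub-Gaussian tail bound of Proposition~\ref{pro:Ysubg} applied to the single random factor $\exp[N(F_{N,g}-p_{N,g})]$ that sits inside both bounds.

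First I would recall the two inequalities of Proposition~\ref{pro:ZNg}, which separate a deterministic entropic sum from one random exponential, namely
\begin{equation*}
\eee^{\kappa}\Big(\sum_{m\in\GN} g(m)\eee^{-NI_N(m)}\Big)\exp[N(F_{N,g}-p_{N,g})](1+o(1))\ \leq\ \mathcal{Z}_{N,g}
\end{equation*}
and the matching upper bound with $\eee^{\kappa}$ replaced by $\eee^{\alpha}$. The entropic sums and the prefactors $\eee^{\kappa}, \eee^{\alpha}$ are deterministic, so all of the randomness of $\mathcal{Z}_{N,g}$ has been concentrated into the scalar $\exp[N(F_{N,g}-p_{N,g})]$.

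Second, I would invoke Proposition~\ref{pro:Ysubg} with $t=s$ to get
\begin{equation*}
\PP_J\Big(|N(F_{N,g}-p_{N,g})|\leq s\Big)\ \geq\ 1-c_1\eee^{-\gamma s^2},
\end{equation*}
with $\gamma\propto p^2/\beta^2$ independent of $N$ and of the choice of $g$. On the event in question we have $\eee^{-s}\leq \exp[N(F_{N,g}-p_{N,g})]\leq \eee^{s}$, and substituting these bounds into the deterministic sandwich of Proposition~\ref{pro:ZNg} yields exactly \eqref{eq:upperBound} and \eqref{eq:lowerBound}. Setting $k_1=c_1$ and $k_2=\gamma$ and rewriting the resulting probability statement in the notation \eqref{eq:Psnotation} concludes the argument.

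There is no real obstacle at this stage, since the heavy lifting has been carried out in Proposition~\ref{pro:Ysubg} (Talagrand's concentration inequality for the random free energy) and in Lemmas~\ref{lem:upper_p}--\ref{lem:lower_p} (moment bounds on $p_{N,g}$) that feed into Proposition~\ref{pro:ZNg}. The only point worth checking explicitly is that the constants $k_1,k_2$ appearing in $\overset{P(s)}{\leq}$ are \emph{absolute} in the sense required, i.e. that they depend on $p,\beta$ but neither on $N$ nor on the function $g$; this is inherited directly from the form of $c_1$ and $\gamma$ in Proposition~\ref{pro:Ysubg}, since that estimate already holds for arbitrary nonnegative $g$ on $\GN$.
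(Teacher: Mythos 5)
Your proposal is correct and follows precisely the paper's own route: combine the deterministic sandwich on $\mathcal{Z}_{N,g}$ from Proposition~\ref{pro:ZNg} with the sub-Gaussian tail on $N(F_{N,g}-p_{N,g})$ from Proposition~\ref{pro:Ysubg}, and intersect events. The only cosmetic caveat is that the constants in the notation~\eqref{eq:Psnotation} are called ``absolute'' in the paper even though $\gamma\propto p^2/\beta^2$; you correctly interpret ``absolute'' as meaning independent of $N$ and of $g$.
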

\begin{proof}
	By Proposition~\ref{pro:Ysubg} we obtain, for any fixed $s>0$, 	
	\begin{equation}\label{eq:upper_expNF-p}
		\exp\big[N (F_{N,g}-p_{N,g})\big] \overset{P(s)}{\leq}  \eee^{s}
		\quad \text{ and } \quad
		\exp\big[N (F_{N,g}-p_{N,g})\big] \overset{P(s)}{\geq}  \eee^{-s},
	\end{equation}
	where  $k_1,k_2>0$ are absolute constants.
	
	To conclude the proof it is sufficient to use the definition of $	\mathcal{Z}_{N,g}$ \eqref{eq:defs1} and Proposition~\ref{pro:ZNg}.
\end{proof}

\begin{remark}\label{rem:-beta}
	The exact same statement of Corollary~\ref{cor:theBound} holds replacing $\eee^{-\beta  \Delta(\sigma)}$ with $\eee^{\, \beta  \Delta(\sigma)}$. 
	The proof remains the same: the Lipschitz constant for the Talagrand concentration inequality (in Section~\ref{sec:proof_Ysubg}) is the same and the change of sign, being squared, disappears from \eqref{eq:proofEZ} onwards.
\end{remark}

We conclude this section with an immediate  application of Corollary~\ref{cor:theBound} which states the closeness of the random mesoscopic measure  $\Qq_{\beta,N}$ to the correspondent deterministic CW quantity $\ti{\mathcal{Q}}_{\beta,N}$. This result will be widely used in Section~\ref{sec:harmonicSum}.
\begin{corollary}\label{cor:sumQ}
	Asymptotically, as $N\to\infty$, using notation \eqref{eq:Psnotation}, the following bounds hold for any fixed $s>0$ and any function $\bar{g}:\GN \to [0,\infty)$ 
	\begin{equation}\label{eq:sumQUpp1}
		\begin{split}
			\sum_{m \in \GN} \bar{g}(m) \, \Qq_{\beta,N}(m) \overset{P(s)}{\leq} \eee^{s+\alpha}  \, \frac{\ti{Z}_{\beta,N}}{Z_{\beta,N}}\left(\sum_{m \in \GN} \bar{g}(m)  \,\ti{\mathcal{Q}}_{\beta,N}(m) \right)
			\left(1+o(1)\right),
		\end{split}
	\end{equation}
	
	\begin{equation}\label{eq:sumQUpp2}
		\begin{split}
			&\sum_{m \in \GN} \bar{g}(m) \, \Qq_{\beta,N}(m)\\ &\quad\overset{P(s)}{\leq} \eee^{s+\alpha}  \, \frac{1}{Z_{\beta,N}}\left(\sum_{m \in \GN \setminus \{1,-1\}} \bar{g}(m)  \, \exp{\Big(-\beta N f_{\beta} (m)\Big) } \sqrt{\frac{2}{\pi N (1-m^2)}} \right)
			\left(1+o(1)\right)\\
			& \qquad +\eee^{s+\alpha}  \, \frac{1}{Z_{\beta,N}}\left(\sum_{m \in \{1,-1\}} \bar{g}(m)  \,  \exp{\Big(-\beta N f_{\beta} (m)\Big) }  \right)
			\left(1+o(1)\right),\\
		\end{split}
	\end{equation}
	where	$\alpha$ and $\kappa$ are defined in \eqref{eq:alpha_k}.
\end{corollary}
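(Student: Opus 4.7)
The plan is to show that Corollary~\ref{cor:sumQ} is essentially a reformulation of Corollary~\ref{cor:theBound} applied to a specific choice of the weight function $g$. Starting from the definition in \eqref{eq:ZbN},
\begin{equation*}
	\sum_{m \in \GN} \bar{g}(m)\, \Qq_{\beta,N}(m)
	= \frac{1}{Z_{\beta,N}} \sum_{m \in \GN} \bar{g}(m)\, \eee^{-\beta N E(m)} \sum_{\sigma \in \Ss_N[m]} \eee^{-\beta \Delta(\sigma)},
\end{equation*}
I would invoke Corollary~\ref{cor:theBound} with the choice $g(m) := \bar{g}(m)\, \eee^{-\beta N E(m)}$, which is non-negative since $\bar{g} \geq 0$. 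This directly gives that the double sum on the right-hand side is, with $P(s)$-probability, bounded above by
\begin{equation*}
	\eee^{s+\alpha} \left(\sum_{m \in \GN} \bar{g}(m)\, \eee^{-\beta N E(m)}\, \eee^{-N I_N(m)}\right)(1+o(1)).
\end{equation*}

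The next step is to recognise the sum inside the parentheses. By the definition \eqref{eq:f_betaN} of $f_{\beta,N}(m) = E(m) + \beta^{-1} I_N(m)$, the exponent collapses to $-\beta N f_{\beta,N}(m)$, and by \eqref{eq:meso} this is exactly $\ti Z_{\beta,N}\, \ti{\mathcal{Q}}_{\beta,N}(m)$. Dividing by $Z_{\beta,N}$ then yields \eqref{eq:sumQUpp1}. This is the cleanest statement and it requires no analytic work beyond matching definitions.

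For the more explicit bound \eqref{eq:sumQUpp2} I would apply Lemma~\ref{lem:expfN} to replace $\eee^{-\beta N f_{\beta,N}(m)}$ in the sum on the right of \eqref{eq:sumQUpp1}. The lemma distinguishes two cases: for $m \in \GN \setminus \{-1,+1\}$ one picks up the Gaussian-like prefactor $\sqrt{2/(\pi N(1-m^2))}\,(1+o(1))$, while for $m \in \{-1,+1\}$ one has $f_{\beta,N}(m) = f_{\beta}(m)$ exactly, so no prefactor appears. Splitting the sum over $\GN$ according to these two cases, and absorbing the constant $\ti Z_{\beta,N}/Z_{\beta,N}$ into the prefactor through the identity $\ti Z_{\beta,N}\, \ti{\mathcal{Q}}_{\beta,N}(m) = \eee^{-\beta N f_{\beta,N}(m)}$, yields \eqref{eq:sumQUpp2}.

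There is no real obstacle here: the corollary is an immediate consequence of Corollary~\ref{cor:theBound} combined with the bookkeeping identity \eqref{eq:f_betaN} linking $E$, $I_N$ and $f_{\beta,N}$, and with Lemma~\ref{lem:expfN} for the second form. The only point to keep track of is that the $(1+o(1))$ factor coming from Lemma~\ref{lem:expfN} is uniform in $m$ on compact subsets of $(-1,1)$, so that it can be pulled out of the sum; since the endpoints $\pm 1$ are treated separately, no uniformity issue arises at the boundary. The statement for $\bar g \geq 0$ is preserved throughout, and the constants $k_1, k_2$ inherited from \eqref{eq:Psnotation} are the same as those in Corollary~\ref{cor:theBound}.
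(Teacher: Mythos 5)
Your proposal is correct and follows essentially the same route as the paper: apply Corollary~\ref{cor:theBound} with the choice $g(m)=\bar g(m)\,\eee^{-\beta N E(m)}$ (the paper folds the constant $1/Z_{\beta,N}$ into $g$, while you divide afterwards, which amounts to the same thing and is arguably cleaner since the Talagrand step needs a deterministic $g$), then identify $\eee^{-\beta N f_{\beta,N}(m)}=\ti Z_{\beta,N}\,\ti\Qq_{\beta,N}(m)$ via \eqref{eq:f_betaN} and \eqref{eq:meso}, and finally invoke Lemma~\ref{lem:expfN} for \eqref{eq:sumQUpp2}. No substantive difference.
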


\begin{proof}
	Using \eqref{eq:ZbN} we obtain
	\begin{equation}\label{eq:sumQ1}
		\begin{split}
			\sum_{m \in \GN} \bar{g}(m) \,  \Qq_{\beta,N}(m) &
			=\frac{1}{Z_{\beta,N}}\sum_{m \in \GN} \bar{g}(m) \, \eee^{-\beta N E(m)}   \sum_{\substack{\sigma \in \Ss_N[m]}}\eee^{-\beta  \Delta(\sigma)}. \\
		\end{split}
	\end{equation}
	Now we can apply the upper bound in Corollary~\ref{cor:theBound}, with $g(m)=\frac{1}{Z_{\beta,N}} \bar{g}(m) \, \eee^{-\beta N E(m)} $, to the right hand side of \eqref{eq:sumQ1}. We conclude the proof of \eqref{eq:sumQUpp1} using the definition of  $\ti{\mathcal{Q}}_{\beta,N}$ \eqref{eq:meso} and \eqref{eq:f_betaN}.
	
	\eqref{eq:sumQUpp2} follows by \eqref{eq:sumQUpp1} simply applying Lemma~\ref{lem:expfN}.
\end{proof}

\medskip

\subsection{Sub-Gaussian bounds on the random term}\label{sec:proof_Ysubg}

Proposition~\ref{pro:Ysubg} follows from 
Talagrand's concentration inequality,
which we cite for completeness in the version of Tao~\cite[Theorem 2.1.13]{Tao12}.

\begin{theorem}[Talagrand concentration inequality]\label{thm:TalConcIneq} 
	Let $G: \RR^M \to \RR$ be a $1$-Lipschitz and  convex function. 
	Let $M \in \NN$, $X=(X_1, \dots, X_M)$, with $X_i$ be independent r.v., uniformly bounded by $K>0$, i.e. $|{X_i} |\leq K$, for every $1 \leq i \leq M$.
	Then, for any  $t\geq 0$, 
	\begin{equation}
		\mathbb{P}\Big(|G(X)-\mathbb{E}\,G(X)| \geq t K\Big)\leq c_1  \exp\big(-c_2 t^2\big),
	\end{equation}
	with positive absolute constants $c_1$, $c_2$.
\end{theorem}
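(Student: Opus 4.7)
The plan is to derive the concentration inequality from Talagrand's \emph{convex distance inequality} on the product space, which is the standard route to concentration for convex $1$-Lipschitz functionals. The argument proceeds in three stages.

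In the first stage I would introduce Talagrand's convex distance: for any measurable $A \subset [-K,K]^M$, set
\begin{equation*}
d_T(x, A) = \sup_{\alpha \in \RR^M,\, |\alpha|_2 \leq 1}\; \inf_{y \in A}\, \sum_{i=1}^M \alpha_i \mathbf{1}\{x_i \neq y_i\},
\end{equation*}
and establish the key exponential-moment bound
\begin{equation*}
\EE\Big[\exp\bigl(d_T(X, A)^2/4\bigr)\Big] \leq \frac{1}{\PP(X \in A)}.
\end{equation*}
This is proved by induction on $M$. Decomposing $X=(X',X_M)$, one bounds a conditional Laplace transform by convex-combining the inductive estimates for the slice $\{y' : (y',X_M) \in A\}$ and for the projection of $A$ onto the first $M-1$ coordinates, with a tuning parameter $\lambda \in [0,1]$ that is optimised at the end. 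This optimisation, together with the elementary convexity inequality that underlies it, is the technical heart of the argument and I expect it to be the main obstacle.

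The second stage connects $d_T$ to $G$. Let $m$ be a median of $G(X)$ and set $A = \{y : G(y) \leq m\}$. By Sion's minimax theorem, $d_T$ admits the dual representation
\begin{equation*}
d_T(x, A) = \inf_{\nu}\, \Big| \big(\textstyle\int \mathbf{1}\{x_i \neq y_i\}\,d\nu(y)\big)_{i=1}^M \Big|_2,
\end{equation*}
where the infimum is over probability measures $\nu$ supported on $A$. For any such $\nu$, the barycentre $\bar y = \int y\,d\nu(y)$ satisfies $G(\bar y) \leq \int G\,d\nu \leq m$ by Jensen's inequality (using convexity of $G$), while the componentwise estimate $|x_i - \bar y_i| \leq 2K \int \mathbf{1}\{x_i \neq y_i\}\,d\nu(y)$, valid because the integrand is bounded by $2K$ and vanishes where $y_i=x_i$, gives $|x - \bar y|_2 \leq 2K\, d_T(x,A)$. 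The $1$-Lipschitz property of $G$ then yields $G(x) - m \leq 2K\, d_T(x, A)$ for every $x \in [-K,K]^M$.

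Finally, I combine these ingredients via Markov's inequality. For $t \geq 0$,
\begin{equation*}
\PP\bigl(G(X) \geq m + tK\bigr)
\leq \PP\bigl(d_T(X, A) \geq t/2\bigr)
\leq \eee^{-t^2/16}\, \EE\bigl[\eee^{d_T(X,A)^2/4}\bigr]
\leq 2\, \eee^{-t^2/16},
\end{equation*}
where the final step uses $\PP(A) \geq 1/2$ since $m$ is a median. The symmetric argument applied to $B = \{y : G(y) \geq m\}$ handles the lower tail. Integrating the two-sided bound gives $|m - \EE\,G(X)| \leq c\,K$ for an absolute constant $c$, so the median may be replaced by the mean at the cost of enlarging $c_1$ and $c_2$, producing the claimed sub-Gaussian tail estimate.
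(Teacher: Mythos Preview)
The paper does not prove this theorem: it is quoted verbatim from Tao~\cite[Theorem~2.1.13]{Tao12} and used as a black box. Your proposal therefore supplies strictly more than the paper does, and the route you sketch --- Talagrand's convex distance inequality, the barycentre/Jensen argument to compare $G$ with its median, then Markov and a median-to-mean swap --- is exactly the standard proof found in Tao's book and elsewhere.

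One point to tighten. Your Stage~2 argument uses convexity of $G$ to conclude $G(\bar y)\leq m$ when $\nu$ is supported on $\{G\leq m\}$; this gives the upper tail $G(x)-m\leq 2K\,d_T(x,A)$. The phrase ``the symmetric argument applied to $B=\{G\geq m\}$'' does not work as written, because Jensen goes the wrong way: for $\nu$ supported on $B$ you only get $G(\bar y)\leq\int G\,d\nu$, not $G(\bar y)\geq m$. The correct way to recover the lower tail from the same ingredients is to set $A_t=\{G\leq m-tK\}$, observe that any $y$ with $G(y)\geq m$ satisfies $d_T(y,A_t)\geq t/2$ by your Stage~2 inequality, and hence
\[
\tfrac12 \leq \PP\bigl(G(X)\geq m\bigr) \leq \PP\bigl(d_T(X,A_t)\geq t/2\bigr) \leq \frac{\eee^{-t^2/16}}{\PP(A_t)},
\]
which rearranges to $\PP(G\leq m-tK)\leq 2\eee^{-t^2/16}$. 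With this correction your outline is complete.
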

\begin{proof}[Proof of Proposition~\ref{pro:Ysubg}]
	We can apply Theorem~\ref{thm:TalConcIneq} to the free energies $F_{N,g}$ as a function of the $N^2$ coupling constants $\hat J_{ij}$.
	Indeed it is standard to see that $F_{N,g}$ is convex and Lipschitz continuous with constant $\frac{\beta}{N p\sqrt{2}}$ 
	(see e.g. Talagrand~\cite[Corollary 2.2.5]{Talabook} ).
	Thus, applying Theorem~\ref{thm:TalConcIneq} for $G =F_{N,g} \left(\frac{\beta}{N p\sqrt{2}}\right)^{-1}$ and $K = 1$, after defining $t'=t \frac{\beta}{N p\sqrt{2}}$ we obtain,
	for some positive constants $c_1,c_2$ and for any $t'\geq 0$,
	\begin{equation}\label{eq:Ysubg_b}
		\mathbb{P}_J\,\Big(N\vert{F_{N,g}-p_{N,g}}\vert \geq t' \Big) \leq c_1  \exp\Bigg(-c_2 \frac{2p^2} {\beta^2}t'^2\Bigg),
	\end{equation}
	concluding   the proof of \eqref{eq:Ysubg} and hence Proposition~\ref{pro:Ysubg}.
\end{proof}

\medskip

\subsection{Asymptotic bounds on the deterministic term} \label{sec:proof_pNm}
In this section we prove first the upper bound on $p_{N,g}$ (Lemma~\ref{lem:upper_p}) and then the lower bound (Lemma~\ref{lem:lower_p}).
The upper bound is obtained by estimates on the first moment of the random partition function $\mathcal{Z}_{N,g}$, while 
the lower bound is in the spirit of Talagrand~\cite[Theorem 2.2.1]{Talabook} and is more delicate. We will see that it involves
also estimates on the second moment of the random partition function.

\begin{proof}[Proof of Lemma~\ref{lem:upper_p}]
	Observing that $\{\hat{J}_{ij}\}_{i,j\in[N]}$ defined in \eqref{eq:delta} are i.i.d. random variables
	such that $\mathbb{E}\,\hat{J}_{ij}=0$, we easily obtain 
	\begin{equation}\label{eq:indepJ}
		\begin{split}
			\mathbb{E}[\mathcal{Z}_{N,g}]
			&= \sum_{m \in  \GN } g(m) \sum_{\substack{\sigma \in  \Ss_N[m]}} \mathbb{E}\left(\exp\left[ \frac{\beta}{Np}\sum_{i<j}\hat{J}_{i j} \sigma_i \sigma_j \right]\right) \\
			&=\sum_{m \in  \GN } g(m)  \sum_{\substack{\sigma \in \Ss_N[m]}} \prod_{i<j}\mathbb{E}\left(\exp\left[\frac{\beta}{Np}\hat{J}_{i j} \sigma_i \sigma_j \right]\right).
		\end{split}
	\end{equation}
	In order to find estimates for \eqref{eq:indepJ}, we first define
	\begin{equation}
		\Phi(x):=\mathbb{E}\,[\exp(x \hat{J}_{i j})],
	\end{equation}
	which is a function independent of $ i, j$, being $\{\hat{J}_{ij}\}_{i,j}$ i.i.d.,  
	with first and second derivatives 
	\begin{align}
		\Phi'(0)&=\mathbb{E}\,\hat{J}_{i j}=0,  \\
		\Phi''(0)&=\mathbb{E}\,\hat{J}^2_{i j}=p(1-p).
	\end{align}
	Performing a Taylor expansion of $\Phi$ we get	
	\begin{equation}
		\Phi(x)= \Phi(0) + x \,\Phi'(0) + \frac{x^2}{2} \Phi '' (0) +o(x^2) = 1 + \frac{x^2}{2}p(1-p) +o(x^2).
	\end{equation}
	Thus, we can exponentiate $\Phi(x)$ to obtain
	\begin{equation}
		\Phi(x)= \exp \bigg( \log \big(\Phi(x)\big)\bigg) 
		=\exp \bigg(\frac{x^2}{2} p(1-p) +o(x^2) \bigg),
	\end{equation}
	where 
	we used the  expansion  $\log(1+x)=x + o(x)$.
	Therefore, for any sequence of coefficients $x^2_{i j}$ which are independent of $i, j$
	and $\sigma$, we have the following
	\begin{equation} \label{eq:xij}
		\begin{split}
			&\sum_{m \in  \GN } g(m)  \sum_{\substack{\sigma \in \Ss_N[m]}}  \prod_{i<j}\mathbb{E}\Big[\exp(x_{i j} \hat{J}_{i j})\Big] = 
			\sum_{m \in  \GN } g(m) 	\sum_{\substack{\sigma \in \Ss_N[m]}} \prod_{i<j}\Phi(x_{i j}) \\
			& = \sum_{m \in  \GN } g(m) \sum_{\substack{\sigma \in \Ss_N[m]}} \prod_{i<j}\exp \left(\frac{x_{i j}^2}{2} p(1-p) +o(x_{i j}^2) \right)\\ 
			& =  \sum_{m \in  \GN } g(m) \, \eee^{-N I_N(m)} \exp \left(\frac{x_{i j}^2}{2} p(1-p) +o(x_{i j}^2) \right)^{N(N-1)/2}\\
			&= \sum_{m \in  \GN } g(m) \, \eee^{-N I_N(m)} \exp \left(x_{i j}^2 p(1-p)\frac{N(N-1)}{4} +o\left(x_{i j}^2 N(N-1)\right) \right),
		\end{split}
	\end{equation}
	asymptotically, for $x_{ij}\to 0$,
	where the third equality holds only if $x^2_{i j}$ is independent of $i, j$
	and $\sigma$. Moreover, we used that the cardinality of $\Ss_N[m]$ is $\eee^{-N I_N(m)}$, where $I_N(m)$ is defined in \eqref{eq:defIN}, and the cardinality of $\{(i,j)\in [N]^2:i<j\}$ is $\frac{N(N-1)}{2}$.
	
	We can apply \eqref{eq:xij} with $x_{i j} = \frac{\beta}{Np} \sigma_i \sigma_j$ because $x_{ij}^2$ is independent of $i,j$ and $\si$. Indeed $x_{ij}^2=\frac{\beta^2}{N^2p^2}$, being $\si_i,\si_j \in \{-1,+1\}$ for any $i,j \in [N]$ and $\si \in \Ss_N$. Thus, we get, asymptotically as $N\to\infty$, 
	\begin{equation}\label{eq:proofEZ}
		\begin{split}
			\mathbb{E}\left[\mathcal{Z}_{N,g} \right]
			&= \sum_{m \in  \GN } g(m) \, \eee^{-N I_N(m)} \exp \Bigg(\frac{\beta^2 (1-p)}{4p} +o(1) \bigg)  \\
			& =   \exp \Big(\alpha +o(1)\Big)  \sum_{m \in  \GN } g(m) \exp \big(-NI_N(m)\big), \\
		\end{split}
	\end{equation}
	where $\alpha$ is defined in \eqref{eq:alpha_k}.
	
	Therefore, by Jensen's inequality and \eqref{eq:proofEZ},
	we have
	\begin{equation}
		\mathbb{E} \Big[ \log \mathcal{Z}_{N,g}   \Big] \leq
		\log \Big(	\mathbb{E}[\mathcal{Z}_{N,g} ]\Big) \\
		= \alpha + o(1) +\log \left(\sum_{m \in  \GN } g(m) \exp \big(-NI_N(m)\big)\right)
	\end{equation}
	which proves  the upper bound.
\end{proof}

\begin{proof}[Proof of Lemma~\ref{lem:lower_p}]
	A key ingredient in the proof is to control the upper bound on the second moment of $\mathcal{Z}_{N,g}$, 
	i.e. prove that the following bound holds 
	\begin{equation}\label{eq:boundsec}
		\mathbb{E}\left[\mathcal{Z}^2_{N,g} \right]\leq \eee^{2\alpha} \,\mathbb{E}\left[\mathcal{Z}_{N,g} \right]^2 \left(1+o(1)\right),
	\end{equation}
	where $\alpha$ is defined in \eqref{eq:alpha_k}.
	
	We estimate $\mathbb{E}\left[\mathcal{Z}^2_{N,g} \right]$ using the first two lines of \eqref{eq:xij} with 
	\begin{equation}
		x_{ij}= \frac{\beta}{Np} \left( \sigma^{(1)}_i \sigma^{(1)}_j + \sigma^{(2)}_i \sigma^{(2)}_j  \right),
	\end{equation} 
	which hold also when $x_{ij}^2$ is not independent on $i,j$ and $\si$,

	\begin{equation}\label{eq:proofEZ2}
		\begin{split}
			&\mathbb{E}\left[\mathcal{Z}^2_{N,g} \right] 
			=\mathbb{E}\,\left[ \sum_{\substack{m,m' \in  \GN }} g(m)\,   g(m') 
			\sum_{\substack{\sigma^{(1)} \in \Ss_N[m], \\ \sigma^{(2)}\in \Ss_N[m']}} \exp \Bigg(\sum_{i <j} \frac{\beta}{Np} \hat{J}_{i j} \,\bigg( \sigma^{(1)}_i \sigma^{(1)}_j + \sigma^{(2)}_i \sigma^{(2)}_j  \bigg) \Bigg) \right] \\
			& = \sum_{\substack{m,m' \in  \GN }} g(m)\,  g(m') \, \mathbb{E}\,\left[ \sum_{\substack{\sigma^{(1)} \in \Ss_N[m], \\ \sigma^{(2)}\in \Ss_N[m']}}\exp \Bigg(\sum_{i <j} \frac{\beta}{Np} \hat{J}_{i j} \,\bigg( \sigma^{(1)}_i \sigma^{(1)}_j + \sigma^{(2)}_i \sigma^{(2)}_j  \bigg) \Bigg) \right] \\ 
			& = \sum_{\substack{m,m' \in  \GN }} g(m) \, g(m') \hspace{-6.2pt}\sum_{\substack{\sigma^{(1)} \in \Ss_N[m], \\ \sigma^{(2)}\in \Ss_N[m']}} \prod_{i<j}\exp \left(\frac{1}{2} \frac{\beta^2}{N^2p^2} \bigg( \sigma^{(1)}_i \sigma^{(1)}_j + \sigma^{(2)}_i \sigma^{(2)}_j  \bigg)^2 p(1-p) +o\left(\frac{\beta^2}{N^2}\right) \right)\\ 
			&\leq  \sum_{\substack{m,m' \in  \GN }} g(m)\,  g(m') \hspace{-6.2pt}\sum_{\substack{\sigma^{(1)} \in \Ss_N[m], \\ \sigma^{(2)}\in \Ss_N[m']}}\prod_{i<j}\exp \left(\frac{\beta^2 }{N^2p}2(1-p) +o\left(\frac{1}{N^2}\right)\right)\\
			&= \sum_{\substack{m,m' \in  \GN }} g(m) \, g(m') \,\eee^{-N I_N(m)}\,\eee^{-N I_N(m')} \exp \left(\frac{N(N-1)}{2} \left[\frac{\beta^2 }{N^2p}2(1-p) +o\left(\frac{1}{N^2}\right)\right]\right)\\
			&= \sum_{\substack{m,m' \in  \GN }} g(m) \, g(m') \, \eee^{-N I_N(m)}\eee^{-N I_N(m')} \exp \left( \beta^2 \frac{(1-p)}{p} +o(1) \right)\\
			& = \exp \left( 4 \alpha+o(1) \right) \sum_{\substack{m \in  \GN }} g(m)  \, \eee^{-N I_N(m)} \sum_{\substack{m' \in  \GN }}g(m')  \, \eee^{-N I_N(m')}  \\
			&=\eee^{2\alpha} \,\mathbb{E}\left[\mathcal{Z}_{N,g} \right]^2 \left(1+o(1)\right),
		\end{split}
	\end{equation}
	where, similarly to the last steps in \eqref{eq:xij}, we used that the cardinality of $\Ss_N[m]$ is $\eee^{-N I_N(m)}$, the cardinality of $\{(i,j)\in [N]^2:i<j\}$ is $\frac{N(N-1)}{2}$. Moreover, 
	in the last line we used \eqref{eq:proofEZ}.
	
	We recall the Paley--Zygmund inequality, which states that
	\begin{equation}\label{eq:smm}
		\mathbb{P}_J \big( X \geq \eta\,  \mathbb{E}X\big) \geq (1-\eta)^2 \frac{(\mathbb{E}X)^2}{\mathbb{E}X^2},
	\end{equation}
	for any non negative random variable $X$  and any $\eta \in (0,1)$. Using \eqref{eq:smm} with $X=\mathcal{Z}_{N,g}$, \eqref{eq:proofEZ} and \eqref{eq:proofEZ2}
	we get, asymptotically as ${N\to \infty}$,
	\begin{equation}\label{eq:firstEvent}
		\begin{split}
			&\mathbb{P}_J \left( \frac{1}{N}\log \mathcal{Z}_{N,g}  \geq  \frac{1}{N}\log \left( \eta \, \mathbb{E}\mathcal{Z}_{N,g} \right) \right) = \mathbb{P}_J \left( \frac{1}{N}\log \mathcal{Z}_{N,g}  
			\geq   \frac{1}{N}\log \left(\mathbb{E}\mathcal{Z}_{N,g} \right)  +  \frac{1}{N}\log \eta \right)\\
			&\geq  \frac{(1-\eta)^2}{\exp \Big( 2\alpha +o(1) \Big)}.  \\
		\end{split}
	\end{equation}
	Moreover, using \eqref{eq:Ysubg_b} together with \eqref{eq:defs2} and the change of variables $t'= Nt''$, we obtain $\forall\ t''>0$,  
	\begin{equation}
		\mathbb{P}_J\,\left(\bigg \lvert {\frac{1}{N}\log \mathcal{Z}_{N,g}-p_{N,g}} \bigg \rvert \geq t'' \right)\leq c_1 \exp\left(-\frac{ 2 c_2 N^2 p^2 t''^2}{\beta^2}\right).
	\end{equation}
	Thus, taking the complementary event, we get 
	\begin{equation}
		\mathbb{P}_J\,\left(-t'' \leq  {\frac{1}{N}\log \mathcal{Z}_{N,g}-p_{N,g}} \leq t'' \right)\geq 1- c_1 \exp\left(-\frac{ 2 c_2 N^2 p^2 t''^2}{\beta^2}\right).
	\end{equation}
	Now, using 
	\begin{equation}
		\mathbb{P}_J\,\left({\frac{1}{N}\log \mathcal{Z}_{N,g}-p_{N,g}} \leq t'' \right) \geq
		\mathbb{P}_J\,\left(-t'' \leq  {\frac{1}{N}\log \mathcal{Z}_{N,g}-p_{N,g}} \leq t'' \right)
	\end{equation} 
	and the change of variable $t=\frac{ N p\sqrt{2 c_2}}{\beta}  t''$ we obtain
	
	\begin{equation} \label{eq:event2}
		\mathbb{P}_J\,\left(\frac{1}{N}\log \mathcal{Z}_{N,g} \leq p_{N,g} +  \frac{t \beta}{N p \sqrt{2c_2}}\right)\geq 1- c_1 \exp(-t^2).
	\end{equation}
	Next we prove that the intersection of the events in  \eqref{eq:firstEvent} and \eqref{eq:event2} is non empty.
	\emph{Assuming}, for $\eta \in (0,1)$, that
	\begin{equation} \label{secondEvent2}
		\mathbb{P}_J\,\left(\frac{1}{N}\log \mathcal{Z}_{N,g} \leq p_{N,g} +  \frac{t \beta}{N p \sqrt{2c_2}}\right)> 1- \frac{(1-\eta)^2}{\exp \Big( 2\alpha +o(1) \Big)} 
	\end{equation}
	and comparing \eqref{eq:firstEvent} and \eqref{secondEvent2}, we notice that the sum of the probabilities of the  two events 
	\begin{equation}
		\left\{\frac{1}{N}\log \mathcal{Z}_{N,g} \leq p_{N,g} +  \frac{t \beta}{N p \sqrt{2c_2}}\right\},
	\end{equation}
	and
	\begin{equation}
		\left\{\frac{1}{N}\log \mathcal{Z}_{N,g}  \geq \frac{1}{N}\log \left(\mathbb{E}\mathcal{Z}_{N,g} \right)  +  \frac{1}{N}\log \eta \right\}
	\end{equation}
	is strictly greater than $1$. Therefore, they intersect in the not empty event 
	\begin{equation}
		\left\{\frac{1}{N}\log \left(\mathbb{E}\mathcal{Z}_{N,g} \right)  +  \frac{1}{N}\log \eta \leq \frac{1}{N}\log \mathcal{Z}_{N,g} \leq p_{N,g} +  \frac{t \beta}{N p\sqrt{2c_2}}\right\}
	\end{equation}
	which is contained in the deterministic set
	\begin{equation}
		\left\{ \frac{1}{N}\log \left(\mathbb{E}\mathcal{Z}_{N,g} \right)  +  \frac{1}{N}\log \eta \leq  p_{N,g} +  \frac{t \beta}{N p\sqrt{2c_2}}\right\}.
	\end{equation} 
	As a consequence, the latter set is non empty and, being deterministic, 
	\begin{equation}\label{eq:pN>}
		p_{N,g}  \geq \frac{1}{N}\log \left(\mathbb{E}\mathcal{Z}_{N,g} \right)  +  \frac{1}{N}\log \eta -  \frac{t \beta}{N p\sqrt{2c_2}} 
	\end{equation} 
	holds with probability 1.
	
	It remains to choose a suitable $t>0$ for  assumption \eqref{secondEvent2} to hold.  A sufficient condition is, for every $\eta \in (0,1)$,
	\begin{equation}\label{eq:cond_on_t}
		c_1 \exp(-t^2)<  \frac{(1-\eta)^2}{\exp \Big( 2\alpha +o(1) \Big)}, 
	\end{equation}
	namely 
	\begin{equation} \label{eq:real_cond_t}
		t^2 > 2\alpha + \log\left(\frac{c_1}{(1-\eta)^2}\right) +o(1).
	\end{equation} 
	Therefore, by \eqref{eq:pN>} and \eqref{eq:real_cond_t}, using \eqref{eq:proofEZ} we obtain, for every $\eta \in (0,1)$,
	\begin{equation}
		\begin{split}
			p_{N,g}&  \geq
			\frac{1}{N}\log \left(\mathbb{E}\mathcal{Z}_{N,g} \right)  +  \frac{1}{N}\log \eta -  \frac{\beta \sqrt{2\alpha +  \log\left(\frac{c_1}{(1-\eta)^2}\right) +o(1)} }{N p\sqrt{2c_2}} \\
			&=	 \frac{1}{N}\log\left(\sum_{m \in  \GN } g(m) \exp \big(-NI_N(m)\big) \right) +\frac{\kappa_{\eta}}{N} +o\left(\frac{1}{N}\right),
		\end{split}
	\end{equation} 
	where 
	\begin{equation}\label{eq:def_k}
		\kappa_{\eta}=\alpha + \log \eta -  \frac{\beta \sqrt{2\alpha +  \log\left(\frac{c_1}{(1-\eta)^2}\right)} }{p\sqrt{2c_2}}. 
	\end{equation}
	Notice that $\kappa_{\eta}<\alpha$.
	In order to obtain the best lower bound, namely the closer to the upper bound proven in Lemma~\ref{lem:upper_p}, we choose $\eta \in (0,1)$ s.t. $\alpha - \kappa_{\eta}$ is minimised and we conclude the proof. This choice motivates the maximum in the definition of $\kappa$, in \eqref{eq:alpha_k}.
\end{proof}	


\section{Capacity estimates} \label{sec:capacities}

This section is entirely devoted to 
obtain upper and lower bounds on capacities between sets with a
fixed magnetisation. These bounds are obtained via two dual 
variational principles, i.e. the \emph{Dirichlet} and \emph{Thomson principles} 
which are extensively discussed in Bovier and den Hollander~\cite[Sections 7.3.1, 7.3.2]{BdH}. The 
result will be expressed in terms of the capacity for the Curie--Weiss model, see \eqref{eq:capaCW}.
In particular, we prove Theorem~\ref{thm:upperB} in Section~\ref{sec:upperbound} and 
Theorem~\ref{thm:lowerB} in Section~\ref{sec:lowerbound}.

\subsection{Asymptotics on capacity: upper bound}\label{sec:upperbound}
In this section we prove Theorem~\ref{thm:upperB}, obtaining the upper bound 
on the capacity of the RDCW model in terms 
of the capacity of the CW model.

\begin{proof}[Proof of Theorem~\ref{thm:upperB}]	
	The main idea of the proof is to find an upper bound on the capacity via 
	the following Dirichlet principle (see Bovier and den Hollander~\cite[Section 7.3.1 and (7.1.29)]{BdH}
	for  details)
	\begin{equation}\label{eq:capadiri}
		\capa\,(\Ss_N[m_1],\Ss_N[m_2]) =
		\min_{f \in \mathcal H} \sum_{\si, \si' \in \Ss_N} \mu_{\beta,N}(\si) p_N(\si,\si') [f(\si) - f(\si ')]^2, 
	\end{equation}
	where 
	\begin{equation}\label{eq:defH}
		\mathcal H=\Big\{ f: \Ss_N \to [0,1] \text{ s.t. } f\vert_{\Ss_N[m_1]}=1, f\vert_{\Ss_N[m_2]}=0\Big\}.
	\end{equation}
	Later it will be clear that we can restrict the previous variational principle over the functions 
	on the space $\GN$, hence it is useful to define 
	\begin{equation}\label{eq:defHti}
		\ti{\mathcal H}=\Big\{ v: \GN \to [0,1]  \text{ s.t. } v(m_1)=1, v(m_2)=0  \Big\}.
	\end{equation}
	In order to simplify the notation we will often neglect the dependency on $m_1,m_2$  when this will not generate confusion.
	
	From \eqref{eq:capadiri}, in view of \eqref{eq:rates} and since $[f(\si) - f(\si')]$ vanishes for $\si=\si'$, we are left only with the terms such that $\si\sim\si'$ and obtain the following first equality in \eqref{eq:diribound}. The second equality in \eqref{eq:diribound} follows by \eqref{eq:tiHNE}, \eqref{eq:EH}, \eqref{eq:Hdecomp} and multiplying and dividing by $\exp\left(-\beta N \left[E(m(\si')) - E(m(\si))\right]_+ \right)$. The inequality in \eqref{eq:diribound} is obtained restricting the minimum on $ \mathcal{H}$ to the minimum on $\{f \in \mathcal{H}: f(\eta)=f(\eta') \, \, \forall \eta, \eta' \in \Ss_N \text{ s.t. } m(\eta)=m(\eta')\}$ and noticing that the latter is in bijection with $\ti{\mathcal H}$.
	\begin{equation}\label{eq:diribound}
		\begin{split}
			&Z_{\beta,N}\,\capa\left(\Ss_N[m_1],\Ss_N[m_2]\right) \\
			& =\min_{f \in \mathcal H}\frac{1}{N} \sum_{\si,\si' \in \Ss_N}\mathds{1}_{\si \sim \si'}  \exp\big(-\beta H_N(\si)\big)\exp\left(-\beta \left[H_N(\si') - H_N(\si )\right]_+\right) [f(\si) - f(\si ')]^2 \\
			&  = \min_{f \in \mathcal{H}}\tilde{Z}_{\beta,N}
			\sum_{m,m' \in \GN}\sum_{\substack{\si \in \Ss_N[m], \\ \si' \in \Ss_N[m'] }}  \mathds{1}_{\si \sim \si'}
			\frac{ \exp\left(-\beta NE(m(\si))\right)}{ \tilde{Z}_{\beta,N}N} \exp\left(-\beta N \left[E(m(\si')) - E(m(\si))\right]_+ \right) \\
			&\qquad \qquad \qquad \quad \times [f(\si) - f(\si')]^2 \exp\left(-\beta\Delta(\si)\right) \frac{\exp\left(-\beta  \left[H_N(\sigma')-H_N(\sigma)\right]_+\right)}{\exp\left(-\beta N \left[E(m(\si')) - E(m(\si))\right]_+ \right)}  \\
			&  \leq 	\min_{v \in \tilde{\mathcal{H}}}\tilde{Z}_{\beta,N} \sum_{m,m'\in \GN} \frac{ \exp\big(-\beta NE(m)\big)}{\tilde{Z}_{\beta,N}N}\exp\big(-\beta N  \left[E(m') -E(m)\right]_+ \big) [v(m) - v(m')]^2 \\ 
			&  \qquad \qquad \qquad\quad \times \sum_{\si \in \Ss_N[m]}  \exp\big(-\beta\Delta(\si)\big)  \sum_{\si' \in \Ss_N[m']}  \mathds{1}_{\si \sim \si'}
			\frac{\exp\big(-\beta  \left[H_N(\sigma')-H_N(\sigma)\right]_+\big)}{\exp\big(-\beta N  \left[E(m') - E(m)\right]_+ \big)}.  \\
		\end{split}
	\end{equation}
	
	We turn now to the last sum in \eqref{eq:diribound} and call this quantity $G(\si, m')$. 
	If $\si \sim \si'$, then $\si$ and $\si'$ differ on a single vertex, say $\ell \in [N]$, i.e. $\forall i \in [N]\setminus \{\ell\}$, $\si_i=\si'_i$  and $\si_{\ell}=-\si'_{\ell}$. 
	Thus, setting $m=m(\si)$ and recalling \eqref{eq:delta} and \eqref{eq:H_CW}, we can write 	
	\begin{align} 
		&\Delta(\si')-\Delta(\si)= - \frac{2}{Np} \sum_{i: i\neq \ell} \hat{J}_{i \ell} \si_i' \si_{\ell}' 
		= \frac{2}{Np} \sum_{i: i\neq \ell} \hat{J}_{i \ell} \si_i \si_{\ell},\label{eq:DeltaDiff}\\
		& \tH(\si')-\tH(\si)= \si_{\ell} \left[ \frac{2}{N} \sum_{i: i\neq \ell} \si_i  +2h\right]
		= \si_{\ell} \left[ \frac{2}{N} (Nm - \si_{\ell})  +2h\right]  \label{eq:diff_tiH}.
	\end{align}
	Moreover, using \eqref{eq:EH}, \eqref{eq:Hdecomp}, the definition of $\hat{J}_{ij}$ below \eqref{eq:Hdecomp}, the second equality in \eqref{eq:DeltaDiff} and the first equality in \eqref{eq:diff_tiH} we can write
	\begin{equation}\label{eq:diff_H}
		H_N(\sigma')-H_N(\sigma)=\tH(\si')-\tH(\si)+\Delta(\si')-\Delta(\si)= \si_{\ell} \left[ \frac{2}{Np} \sum_{i: i\neq \ell} J_{i 		\ell}\si_i  +2h\right].
	\end{equation}
	Due to the presence of the indicator function $ \mathds{1}_{\si \sim \si'}$, $G(\si, m')$  vanishes if $m' \notin \{m \pm \frac{2}{N}\}$.  Moreover, we can rewrite the sum 
	$\sum_{\si' \in \Ss_N[m']}  \mathds{1}_{\si \sim \si'}$ in terms of the single vertex $\ell \in [N]$ on which $\si$ and $\si'$ differ. Notice that if $m(\si')= m + \frac{2}{N}$ then $\si_{\ell}=-1=-\si'_{\ell}$ and if $m(\si')= m - \frac{2}{N}$ then $\si_{\ell}=1=-\si'_{\ell}$.
	
	Therefore, calling $ i^\pm (\si):=\{j \in [N] : \si_j=\pm 1\}$, and using \eqref{eq:tiHNE}, \eqref{eq:diff_tiH} and \eqref{eq:diff_H},  we obtain 
	\begin{equation}\label{eq:ratio_trans+}
		\begin{split}
			G\left(\si, m+\tfrac{2}{N}\right)=&\sum_{\ell \in i^-(\si)} 
			\frac{\exp\left(-\beta \left[- \frac{2}{Np} \sum_{i: i \neq \ell} J_{i\ell} \si_i -2h \right]_+ \right)}
			{\exp\left(-\beta \left[- \frac{2}{N} (Nm+1) -2h \right]_+ \right)} \leq N\, \frac{1-m}{2}\, \eee^{2\beta},\\
		\end{split}
	\end{equation}
	\begin{equation}\label{eq:ratio_trans-}
		\begin{split}
			G\left(\si, m-\tfrac{2}{N}\right)=&\sum_{\ell \in i^+(\si)} 
			\frac{\exp\left(-\beta \left[ \frac{2}{Np} \sum_{i: i \neq \ell} J_{i\ell} \si_i +2h \right]_+ \right)}
			{\exp\left(-\beta \left[ \frac{2}{N} (Nm-1) +2h \right]_+ \right)} \leq N\, \frac{1+m}{2} \, \eee^{2\beta(1+h)}. \\
		\end{split}
	\end{equation}
	To obtain the inequalities we used  the fact that, for any $\si$ in $\Ss_N$,  the cardinalities of $ i^- (\si)$ and $ i^+ (\si)$ are respectively $N\, \frac{1-m(\si)}{2}$ and $N\, \frac{1+m(\si)}{2}$.
	Moreover, for the inequality in \eqref{eq:ratio_trans+} we used the following elementary facts holding asymptotically in $N$,
	\begin{equation}
		\exp\left(-\beta \left[- \frac{2}{Np} \sum_{i: i \neq \ell} J_{i\ell} \si_i -2h \right]_+ \right) \leq 1,
	\end{equation}
	\begin{equation}
		\exp\left(\beta \left[- \frac{2}{N} (Nm+1) -2h \right]_+ \right) \leq \exp\left(\beta \left[- 2m - \frac{2}{N} -2h \right]_+ \right) \leq  \eee^{2\beta}.
	\end{equation}
	Similar inequalities were used to prove \eqref{eq:ratio_trans-}.
	
	Thus, using  \eqref{eq:diribound}, \eqref{eq:ratio_trans+}, \eqref{eq:ratio_trans-} we obtain 
	\begin{equation}
		\begin{split}
			Z_{\beta,N}& \,\capa\left(\Ss_N[m_1],\Ss_N[m_2]\right) \\
			&\leq \min_{v \in \tilde{\mathcal{H}}}\,\tilde{Z}_{\beta,N} \sum_{m,m' \in \GN} 
			\frac{\exp\big(-\beta NE(m)\big)}{\tilde{Z}_{\beta,N}N} \exp\left(-\beta N\left[E(m') - E(m)\right]_+ \right)  
			[v(m) - v(m')]^2\\ 
			& \qquad \times \eee^{2\beta(1+h)} \sum_{\si \in \Ss_N[m]} \exp\big(-\beta\Delta(\si)\big)   \left[N \tfrac{1+m}{2}\mathds{1}_{m-\frac{2}{N}}(m') + N \tfrac{1-m}{2}\mathds{1}_{m+\frac{2}{N}}(m') 
			\right].\\
		\end{split}
	\end{equation}
	Using the upper bound in Corollary~\ref{cor:theBound} with 
	\begin{equation}
		\begin{split}
			g(m)=&
			\sum_{m' \in \GN} 
			\frac{\exp\big(-\beta NE(m)\big)}{\tilde{Z}_{\beta,N}N} \exp\left(-\beta N\left[E(m') - E(m)\right]_+ \right)  
			[v(m) - v(m')]^2\\ 
			& \qquad \times \eee^{2\beta(1+h)} \left[N \tfrac{1+m}{2}\mathds{1}_{m-\frac{2}{N}}(m') + N \tfrac{1-m}{2}\mathds{1}_{m+\frac{2}{N}}(m') 
			\right]\\
		\end{split}
	\end{equation}
	we obtain 
	\begin{equation}\label{capU1}
		\begin{split}
			&Z_{\beta,N}\,\capa\left(\Ss_N[m_1],\Ss_N[m_2]\right)   \\ 
			& \overset{P(s)}{\leq}  \eee^{s+2\beta(1+h) + \alpha } \tilde{Z}_{\beta,N}\min_{v \in \tilde{\mathcal{H}}}\sum_{m,m' \in \GN}  \frac{\exp\left(-\beta NE(m)-N I_N(m)\right)}{\tilde{Z}_{\beta,N}N}\,\exp(-\beta N\left[E(m') - E(m)\right]_+ )  \\ 
			& \qquad \times [v(m) - v(m')]^2\,  \left[N \frac{1+m}{2}\mathds{1}_{m-\frac{2}{N}}(m') + N \frac{1-m}{2}\mathds{1}_{m+\frac{2}{N}}(m') \right]
			\left(1+o(1)\right)\\
			& = \eee^{s+2\beta(1+h) + \alpha } \tilde{Z}_{\beta,N}\min_{v \in \tilde{\mathcal{H}}}\sum_{m,m' \in \GN}   
			\tilde{\mathcal Q}(m)\,\tilde{r}(m, m')\, [v(m) - v(m')]^2\,  
			\left(1+o(1)\right)\\
			& = \eee^{s+2\beta(1+h) + \alpha} \,\tilde{Z}_{\beta,N} \,\capa^{CW} (\Ss_N[m_1],\Ss_N[m_2])\left(1+o(1)\right),
		\end{split}
	\end{equation}
	where we used notation \eqref{eq:Psnotation} and in the middle step we used \eqref{eq:meso}, the first equality in \eqref{eq:f_betaN} and \eqref{eq:rates_r}. Furthermore, we noticed that the variational form appearing 
	in the previous formula is the  Dirichlet principle (see Bovier and den Hollander~\cite[(7.1.29), (7.3.1)]{BdH}) applied to the random walk performed by the projection of the CW model dynamics onto the magnetisation space. See Section~\ref{sec:cwmodel} for the CW model. 
	
	\label{lumping} We conclude that the minimum equals the capacity of the CW model using lumping techniques.  More precisely, here we used Bovier and den Hollander~\cite[(9.3.6)]{BdH}, stating that the capacity for the  dynamics projected onto the magnetisation space equals the capacity for the CW dynamics on the configuration space,  which holds because of the CW model mean-field property. For reference on lumping see Bovier and den Hollander~\cite[Section 9.3]{BdH}.
\end{proof}

\subsection{Asymptotics on capacity: lower bound}\label{sec:lowerbound}
In this section we prove Theorem~\ref{thm:lowerB}, 
obtaining the lower bound on the capacity of the RDCW model in terms 
of the capacity of the CW model. We will prove it without loss of generality, only for $m_1 < m_2\in \GN$, because the capacity can be proven to be symmetric, using the reversibility of the dynamics.

The main idea of the proof is to find a lower bound on the capacity of the RDCW model via the Thomson principle (see e.g. Bovier and den Hollander~\cite[Theorem 7.37]{BdH}.
For $\capa\left(\Ss_N[m_1],\Ss_N[m_2]\right)$ it reads
\begin{equation}\label{eq:Thoms}
	\capa\left(\Ss_N[m_1],\Ss_N[m_2]\right) =  \sup \left\{ \frac{1}{\Dd(\bar{\Psi})}: \bar{\Psi}\in \mathcal U_{\Ss_N[m_1],\Ss_N[m_2]}\right\},
\end{equation}
where we denote by $\mathcal U_{\Ss_N[m_1],\Ss_N[m_2]}$ the space of all unitary antisymmetric flows from\break $\Ss_N[m_1]$ to $\Ss_N[m_2]$ and $\Dd$ is defined by 
\begin{equation}
	\Dd(\psi)= \frac{1}{2}\sum_{\sigma, \sigma' \in \Ss_N}\mathds{1}_{\sigma' \sim  \sigma}\, \frac{\psi (\sigma,\sigma')^2}{\mu_{\beta,N}(\sigma)\, p_N(\sigma,\sigma')} 
\end{equation}
for any $\psi:\Ss_N^2\to \RR$ antisymmetric flow.
Thus, in order to find a lower bound in terms of the capacity of the CW model we have to find a unitary flow from which we could reconstruct the CW capacity term.

For all $\sigma, \sigma' \in \Ss_N$, we define the candidate flow $\Psi_N$
as follows
\begin{equation} \label{eq:flowLower}
	\Psi_N(\sigma, \sigma')=\phi_N(m(\sigma),m(\sigma')),
\end{equation}
where, for all  $m,m' \in \GN$,   
\begin{equation} \label{eq:defPhi}
	\begin{split}
		\phi_N (m,m')=
		\begin{cases}
			\bigg[\frac{(1-m)N}{2} \exp\left(-N I_N(m)\right)  \bigg]^{-1} \quad  &\text{ if } m_1 \leq m \leq  m_2 -\frac{2}{N},  m' = m + \frac{2}{N} \\
			-\bigg[\frac{(1+m)N}{2} \exp\left(-N I_N(m)\right)  \bigg]^{-1} \quad  &\text{ if } m_1 + \frac{2}{N} \leq m \leq  m_2,  m' = m - \frac{2}{N} \\
			0  &\text{ otherwise.}
		\end{cases}
	\end{split}
\end{equation}

The proof of Theorem~\ref{thm:lowerB} is postponed after two technical intermediate 
results which are essential for it. 
The following lemma allows us to use $\Psi_N$ in the Thomson principle.
\begin{lemma}\label{lem:unitFlow}
	Let $m_1<m_2 \in \GN$. The  flow $\Psi_N$  on $\Ss_N$, defined in \eqref{eq:flowLower} is a unitary antisymmetric flow from $\Ss_N[m_1]$ to $\Ss_N[m_2]$,
	i.e. $\Psi_N \in \mathcal{U}_{\Ss_N[m_1],\Ss_N[m_2]}$.
\end{lemma}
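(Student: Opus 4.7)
The plan is to verify directly the three defining properties of a unitary antisymmetric flow from $\Ss_N[m_1]$ to $\Ss_N[m_2]$ on the one-spin-flip graph on $\Ss_N$: (i) antisymmetry $\Psi_N(\si,\si')=-\Psi_N(\si',\si)$; (ii) Kirchhoff's conservation law $\sum_{\si'\sim\si}\Psi_N(\si,\si')=0$ for every $\si$ with $m(\si)\notin\{m_1,m_2\}$; and (iii) unit net outflow from the source $\Ss_N[m_1]$. Since $\Psi_N$ depends on its arguments only through the magnetisations and since adjacent configurations have magnetisations differing by exactly $2/N$, each requirement reduces to a purely combinatorial identity involving $\phi_N$ and the cardinality $\abs{\Ss_N[m]}=\eee^{-NI_N(m)}=\binom{N}{(1+m)N/2}$.

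For antisymmetry, the only non-trivial case is a pair of magnetisations $(m,m+2/N)$ with $m_1\leq m\leq m_2-2/N$. Setting $k=(1+m)N/2$, so that $(1+(m+2/N))N/2=k+1$, the identity $\phi_N(m,m+2/N)=-\phi_N(m+2/N,m)$ rewrites, after equating the two non-zero branches of \eqref{eq:defPhi}, as
\begin{equation*}
(1-m)\binom{N}{k}=\Big(1+m+\tfrac{2}{N}\Big)\binom{N}{k+1},
\end{equation*}
which is equivalent to the elementary Pascal-type relation $(N-k)\binom{N}{k}=(k+1)\binom{N}{k+1}$. All other magnetisation pairs fall in the third branch of \eqref{eq:defPhi} and give $0=0$.

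For conservation at an interior $\si$ with $m_1<m(\si)=m<m_2$, the one-spin-flip neighbours $\si'\sim\si$ split into $N(1-m)/2$ configurations obtained by flipping a $-1$ spin to $+1$ (so $m(\si')=m+2/N$) and $N(1+m)/2$ configurations obtained by flipping a $+1$ spin to $-1$ (so $m(\si')=m-2/N$). Plugged into \eqref{eq:defPhi}, these multiplicities cancel the factors $(1\mp m)N/2$ in the denominators of $\phi_N$, so the forward and backward contributions evaluate to $+\eee^{NI_N(m)}$ and $-\eee^{NI_N(m)}$ respectively and cancel. For $m(\si)\notin[m_1,m_2]$ every applicable branch of $\phi_N$ vanishes and conservation is trivial.

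The unitarity check is the same computation restricted to the source: for every $\si\in\Ss_N[m_1]$ only the forward branch of $\phi_N$ is active, contributing $\tfrac{N(1-m_1)}{2}\cdot\phi_N(m_1,m_1+2/N)=\eee^{NI_N(m_1)}$, and summing over the $\abs{\Ss_N[m_1]}=\eee^{-NI_N(m_1)}$ configurations in the source yields total outflow equal to $1$. There is no serious obstacle here---the normalising factors in \eqref{eq:defPhi} have been reverse-engineered precisely so that the neighbour counts and the entropic factor $\eee^{-NI_N(m)}$ cancel in all three checks; the only real care required is the bookkeeping of the boundary conditions in \eqref{eq:defPhi} to confirm that antisymmetry and conservation hold trivially on the magnetisations lying outside $[m_1,m_2]$.
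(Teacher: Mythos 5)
Your proof is correct and follows essentially the same route as the paper's: antisymmetry reduces to the binomial identity $(N-k)\binom{N}{k}=(k+1)\binom{N}{k+1}$ (which is the paper's \eqref{eq:binComp} in disguise), and both Kirchhoff's law and the unit-strength normalisation follow from the cancellation of the neighbour counts $N(1\mp m)/2$ against the denominators of $\phi_N$ together with $\abs{\Ss_N[m]}=\eee^{-NI_N(m)}$. The only presentational difference is that the paper additionally spells out the inflow at the sink $\Ss_N[m_2]$, whereas you compute only the source outflow; this is harmless because, once antisymmetry and Kirchhoff's law hold, the sink inflow is forced to equal the source outflow.
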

\begin{proof}
	$\Psi_N$ is \emph{antisymmetric} because for all $m \in \GN$, i.e. 
	\begin{equation}\label{eq:binComp}
		\frac{(1+m)}{2}N \exp\big(-N I_N(m)\big) =  \frac{\left(1-\left(m-\tfrac{2}{N}\right)\right)}{2}N \exp\left(-N I_N\left(m-\tfrac{2}{N}\right)\right).
	\end{equation}
	Indeed, using  \eqref{eq:defIN}, the right hand side of \eqref{eq:binComp} writes
	\begin{equation}
		\begin{split}
			& \frac{\left(1-\left(m-\tfrac{2}{N}\right)\right)}{2}N \exp\big(-N I_N\left(m-\tfrac{2}{N}\right)\big)= \tfrac{\left(1-\left({m}-\frac{2}{N}\right)\right)N}{2}
			\binom{N}{\frac{1-\left({m}-\frac{2}{N}\right)}{2}N} \\
			&=  \tfrac{\left(1-\left({m}-\frac{2}{N}\right)\right)N}{2} \frac{N!}{\left[\tfrac{\left(1-{m}\right)N}{2}+1\right]!
				\left[\tfrac{\left(1+{m}\right)N}{2}-1\right]!}
			=\frac{N!}{\left[\tfrac{\left(1-{m}\right)N}{2}\right]!
				\left[\tfrac{\left(1+{m}\right)N}{2}-1\right]!} \\
			&= \frac{(1+m)}{2}N \, \binom{N}{\frac{1+{m}}{2}N} = \frac{(1+m)}{2}N \exp\big(-N I_N(m)\big). 
		\end{split}
	\end{equation}
	
	Next we prove that the \emph{Kirchhoff law} holds, i.e., for all ${\sigma} \in  \Ss_N \setminus (\Ss_N[m_1]\cup \Ss_N[m_2])$
	\begin{equation}\label{eq:Kl}
		\sum_{\sigma' \in \Ss_N: \sigma \sim {\sigma'}  } \Psi_N(\sigma, {\sigma'})=0.
	\end{equation}
	
	For all ${\sigma} \in  \Ss_N$ such that ${m(\si)} \notin (m_1,m_2)$, \eqref{eq:Kl} holds trivially being all terms zero, by \eqref{eq:defPhi}. 
	Now, for all ${\sigma} \in  \Ss_N$ such that ${m(\si)} \in (m_1,m_2)$, 
	\begin{equation}
		\begin{split}
			\sum_{\sigma' \in \Ss_N: \sigma \sim {\sigma'}  } \Psi_N(\sigma, {\sigma'})&= 
			\sum_{\substack{\sigma' \in \Ss_N: \sigma \sim {\sigma'},\\ m(\sigma')={m(\si)}+\frac{2}{N} }} \phi_N\left(m(\si),m(\sigma')\right)	
			+ 	\sum_{\substack{\sigma' \in \Ss_N: \sigma \sim {\sigma'},\\ m(\sigma')={m(\si)}-\frac{2}{N} }} \phi_N\left(m(\sigma), {m(\si')}\right)\\
			&=\frac{(1-m(\si))N}{2}	\bigg[\frac{(1-m(\si))N}{2} \exp\big(-N I_N(m(\si))\big)  \bigg]^{-1} 	\\
			& \quad -\frac{(1+m(\si))N}{2}\bigg[\frac{(1+m(\si))N}{2} \exp\big(-N I_N(m(\si))\big)  \bigg]^{-1} \\
			&=0,
		\end{split}
	\end{equation}
	where $\frac{(1\mp {m(\si)})N}{2}$ in the second equality are the cardinalities of the set over which we were summing, namely the number of negative, respectively positive, spins in a configuration $\si \in \Ss_N$.

	We are left to show that $\Psi_N$ is \emph{unitary}  from $\Ss_N[m_1]$ to $\Ss_N[m_2]$, namely
	\begin{equation}\label{eq:unit}
		\sum_{a \in \Ss_N[m_1]} \sum_{\sigma' \in \Ss_N : a \sim \sigma' } \Psi_N(a,\sigma') = 1 =	\sum_{b \in \Ss_N[m_2]} \sum_{\sigma \in \Ss_N : \sigma \sim b} \Psi_N(\sigma,b).
	\end{equation} 
	The left hand side of \eqref{eq:unit} equals 
	\begin{equation}
		\begin{split}
			&	\sum_{a \in \Ss_N[m_1]} \sum_{\sigma' \in \Ss_N : a \sim \sigma' } \phi_N\left(m(a), m(\sigma')\right) \\
			&= \sum_{a \in \Ss_N[m_1]} \sum_{\substack{\sigma' \in \Ss_N: a \sim \sigma', \\ 		m(\sigma')=m_1+\frac{2}{N} }} 
			\left[\tfrac{(1-m_1)N}{2} \exp\big(-N I_N(m_1)\big) \right]^{-1}  \\ 
			&=   \exp\big(-N I_N(m_1)\big)  \tfrac{(1-m_1)N}{2} \left[\tfrac{(1-m_1)N}{2} \exp\big(-N I_N(m_1)\big) \right]^{-1} =1. 
		\end{split}
	\end{equation} 
	
	The right hand side of \eqref{eq:unit} equals 
	\begin{equation}
		\begin{split}
			&	\sum_{b \in \Ss_N[m_2]} \sum_{\sigma \in \Ss_N : \sigma \sim b } \phi_N\left(m(\sigma), m(b)\right) \\
			&= \sum_{b \in \Ss_N[m_2]} \sum_{\substack{\sigma \in \Ss_N: \sigma \sim b, \\ 		m(\sigma)=m_2-\frac{2}{N} }} 
			\left[\tfrac{\left(1-\left(m_2-\tfrac{2}{N}\right)\right)}{2}N\exp \left(-N I_N \left(m_2-\frac{2}{N}\right) \right) \right]^{-1}  \\ 
			&=  \exp\left(-N I_N(m_2)\right)  \tfrac{\left(1+\left(m_2\right)\right)}{2}N  \left[\tfrac{\left(1-\left(m_2-\tfrac{2}{N}\right)\right)}{2}N\exp \left(-N I_N \left(m_2-\frac{2}{N}\right) \right) \right]^{-1}.
		\end{split}
	\end{equation}
	We use \eqref{eq:binComp} to  conclude the proof.
\end{proof}

\begin{lemma}\label{lem:claim1}
	For all $\si \in \Ss_N$ and $m' \in \GN$, the following holds
	\begin{multline}
		\sum_{\sigma' \in \Ss_N[m']}\mathds{1}_{\sigma' \sim  \sigma} \frac{\exp\left(-\beta\left[\tH(\sigma')-\tH(\sigma) \right]_+\right)}{\exp\Big(-\beta\left[H_N(\sigma')-H_N(\sigma)\right]_+\Big)} \\
		\leq \eee^{2\beta(1+h)} \,\left[N \frac{1+m(\sigma)}{2}\mathds{1}_{m(\sigma)-\frac{2}{N}}(m') + N \frac{1-m(\sigma)}{2}\mathds{1}_{m(\sigma)+\frac{2}{N}}(m') \right].
	\end{multline}
\end{lemma}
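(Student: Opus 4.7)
The proof would follow the template of \eqref{eq:ratio_trans+}--\eqref{eq:ratio_trans-} in the proof of Theorem~\ref{thm:upperB}, now applied to the ratio with the roles of $H_N$ and $\tH$ exchanged between numerator and denominator. First, the indicator $\mathds{1}_{\si'\sim\si}$ restricts the sum to $\si'$ obtained from $\si$ by flipping a single spin $\ell$, so the sum vanishes unless $m' \in \{m(\si)+2/N,\, m(\si)-2/N\}$. I would parametrise the admissible $\si'$ by the flipped index $\ell$: when $m'=m(\si)+2/N$, the index $\ell$ ranges over $i^{-}(\si)$ (of cardinality $N(1-m(\si))/2$), and when $m'=m(\si)-2/N$, it ranges over $i^{+}(\si)$ (of cardinality $N(1+m(\si))/2$). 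Substituting the closed-form single-flip expressions from \eqref{eq:diff_tiH} and \eqref{eq:diff_H} puts each summand in a form amenable to direct estimation.

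The main step is a uniform per-term bound. The numerator $\exp(-\beta[\tH(\si')-\tH(\si)]_+)$ is at most $1$ because $[\,\cdot\,]_+ \geq 0$. For the denominator I would invoke the same crude estimate on the Metropolis exponent that underlies \eqref{eq:ratio_trans+}--\eqref{eq:ratio_trans-}: the positive part of $H_N(\si')-H_N(\si)$ is bounded by $2(1+h)$ up to $O(1/N)$ corrections, uniformly in $\si$, $\ell$, and in the realisation of the couplings, so that $\exp(-\beta[H_N(\si')-H_N(\si)]_+) \geq \eee^{-2\beta(1+h)}$. Hence each summand is at most $\eee^{2\beta(1+h)}$, and performing the sum over the admissible indices $\ell$ contributes the cardinality factor $N(1\pm m(\si))/2$. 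Rewriting the result in terms of the indicators $\mathds{1}_{m(\si)\pm 2/N}(m')$ then matches the right-hand side of the claim exactly.

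The delicate point to verify carefully is precisely this uniform denominator estimate, which mirrors the deterministic bound applied to $\tH$ in the upper bound proof. Since the lemma is a pointwise inequality holding for every realisation of $\{J_{ij}\}_{i,j \in [N]}$, no Talagrand-type concentration or other probabilistic input is required; the argument is purely algebraic, built from the single-flip formulas and the elementary observation $|\tH(\si')-\tH(\si)| \leq 2(1+h)+O(1/N)$.
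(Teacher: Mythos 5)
Your structural outline matches the paper's proof exactly: restrict to single-flip $\si'$ so that the sum vanishes unless $m'\in\{m(\si)\pm\frac2N\}$, parametrise by the flipped index $\ell\in i^{\mp}(\si)$, bound the numerator $\exp(-\beta[\tH(\si')-\tH(\si)]_+)$ by $1$, bound the denominator uniformly in $\ell$ and in the coupling realisation, and read off the cardinality factor $N(1\mp m(\si))/2$.

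The gap is in the uniform denominator estimate. You assert that $[H_N(\si')-H_N(\si)]_+\leq 2(1+h)+O(1/N)$ uniformly over realisations of $\{J_{ij}\}$, calling it ``the same crude estimate that underlies \eqref{eq:ratio_trans+}--\eqref{eq:ratio_trans-}.'' But in those displays the quantity being bounded by $2(1+h)$ is the \emph{Curie--Weiss} increment $\tH(\si')-\tH(\si)=\si_\ell[\frac{2}{N}(Nm-\si_\ell)+2h]$, whose interaction term has the $\frac1N$ normalisation and is indeed $O(1)$. Here the denominator carries the \emph{diluted} increment, which by \eqref{eq:diff_H} reads
\begin{equation*}
H_N(\si')-H_N(\si)=\si_\ell\Big[\tfrac{2}{Np}\sum_{i\neq\ell}J_{i\ell}\si_i+2h\Big],
\end{equation*}
with the $\frac1{Np}$ normalisation. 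Since each $J_{i\ell}\si_i\in\{-1,0,1\}$, for an adversarial realisation the coupling sum equals $\pm(N-1)$, so $[H_N(\si')-H_N(\si)]_+$ can be as large as $\frac{2(N-1)}{Np}+2h\to\frac{2}{p}+2h$, which strictly exceeds $2(1+h)$ for every $p<1$. The pointwise inequality $\exp(-\beta[H_N(\si')-H_N(\si)]_+)\geq\eee^{-2\beta(1+h)}$ therefore fails, and no $O(1/N)$ correction repairs it. (Even the sharper route $[b]_+-[a]_+\leq[b-a]_+=[\Delta(\si')-\Delta(\si)]_+$ only improves this to $\frac{2}{Np}(N-1)\max(p,1-p)$, i.e.\ $\max\big(2,\tfrac{2(1-p)}{p}\big)$ in the limit, which still blows up as $p\to0$.) You should note that the paper's own displayed computation at this step writes $-\frac{2}{N}\sum_{i\neq\ell}J_{i\ell}\si_i-2h$ for the denominator exponent, silently dropping the $\frac1p$ and thereby making the $\leq 2$ bound appear valid; so the gap is inherited from the source, but your write-up asserts the uniform bound without the $p$-dependence it actually requires, and this is precisely the step that cannot be justified as stated.
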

\begin{proof}
	Let $m=m(\sigma)$.
	The left hand side is non-zero only if $m' \in \left\{m+\frac{2}{N}, m-\frac{2}{N}\right\}$.\\
	Recalling the definition $ i^\pm (\si)=\{j \in [N] : \si_j=\pm 1\}$, if $m'=m+\frac{2}{N}$, we have
	\begin{equation}
		\begin{split}
			&\sum_{\sigma' \in \Ss_N\left[m+\frac{2}{N}\right]}\mathds{1}_{\sigma' \sim  \sigma} \frac{\exp\left(-\beta\left[\tH(\sigma')-\tH(\sigma) \right]_+\right)}{\exp\Big(-\beta\left[H_N(\sigma')-H_N(\sigma)\right]_+\Big)} \\
			&=  \sum_{\ell \in i^-(\sigma)} 
			\frac{\exp\left(-\beta \left[- \frac{2p}{N} (Nm+1) -2h \right]_+ \right)}
			{\exp\left(-\beta \left[- \frac{2}{N} \sum_{i: i \neq \ell} J_{i\ell} \sigma_i -2h \right]_+ \right)}  \\
			& \leq \sum_{\ell \in i^-(\sigma)} 
			\exp\left(\beta \left[- \frac{2}{N} \sum_{i: i \neq \ell} J_{i\ell} \sigma_i -2h \right]_+ \right)  
			\leq  \sum_{\ell \in i^-(\sigma)} \eee^{2\beta} =  N \frac{1-m}{2} \eee^{2\beta},
		\end{split}
	\end{equation}
	where we have used that, since $h>0$, 
	\begin{equation}
		\begin{split}
			&- \frac{2}{N} \sum_{i: i \neq \ell} J_{i\ell} \sigma_i -2h 
			\leq  \frac{2}{N} \sum_{i: i \neq \ell} |J_{i\ell} \sigma_i  |
			\leq\frac{2(N-1)}{N} \leq 2.
		\end{split}
	\end{equation}
	Similarly, if $m'=m-\frac{2}{N}$, we get
	\begin{multline}
		\sum_{\sigma' \in \Ss_N\big[m-\frac{2}{N}\big]}\mathds{1}_{\sigma' \sim  \sigma} \frac{\exp\left(-\beta\left[\tH(\sigma')-\tH(\sigma) \right]_+\right)}{\exp\Big(-\beta\left[H_N(\sigma')-H_N(\sigma)\right]_+\Big)} \\
		=  \sum_{\ell \in i^+(\sigma)} 
		\frac{\exp\left(-\beta \left[ \frac{2p}{N} (Nm-1) +2h \right]_+ \right)}
		{\exp\left(-\beta \left[ \frac{2}{N} \sum_{i: i \neq \ell} J_{i\ell} \sigma_i +2h \right]_+ \right)}  
		\leq N \frac{1+m}{2} \eee^{2\beta(1+h)}.
	\end{multline}
\end{proof}

With the previous lemmas at hand, we are now ready to prove the lower bound on the capacity. 
\begin{proof}[Proof of Theorem~\ref{thm:lowerB}]
	As we mentioned above, since the capacity is symmetric, we will prove the result only for $m_1<m_2 \in \GN$.
	
	Let $\Psi_N $ be the test flow defined in \eqref{eq:flowLower}, which by Lemma~\ref{lem:unitFlow} is in $ \mathcal U_{\Ss_N[m_1],\Ss_N[m_2]}$. Thus, using the Thomson principle \eqref{eq:Thoms}, we obtain the following bound
	\begin{equation}\label{eq:step1lower}
		\capa\left(\Ss_N[m_1],\Ss_N[m_2]\right) \geq \frac{1}{\Dd(\Psi_N )}.
	\end{equation}
	Therefore, we are interested in upper bounds on $\Dd(\Psi_N )$ which, using \eqref{eq:eqmeas}, \eqref{eq:rates} and \eqref{eq:delta}, can be written as follows
	\begin{equation} \label{eq:psi2_1}
		\begin{split}
			\Dd(\Psi_N ) =\frac{1}{2}N\sum_{\sigma, \sigma'  \in \Ss_N}\mathds{1}_{\sigma' \sim  \sigma} \,
			\frac{\phi_N (m(\sigma),m(\sigma'))^2}{\exp\left(-\beta (\tH(\sigma)+ \Delta(\sigma))\right)}
			\frac{Z_{\beta,N}}{\exp\Big(-\beta[H_N(\sigma')-H_N(\sigma)]_+\Big)}.
		\end{split}
	\end{equation}
	
	By multiplying and dividing by $\exp(-\beta[\tH(\sigma')-\tH(\sigma) ]_+)\,\tilde{Z}_{\beta,N}$, and using \eqref{eq:tiHNE}, \eqref{eq:ti_mu} and \eqref{eq:tiMuQu},
	we get
	\begin{equation}
		\begin{split}
			\Dd(\Psi_N ) &
			=  N\,\frac{ Z_{\beta,N} }{2\tilde{Z}_{\beta,N}}\sum_{m,m' \in \Gamma_N} 
			\frac{\phi_N (m,m')^2}{ \ti\Qq_{\beta,N}(m) \exp{\big(NI_N(m)\big)} \exp\left(-\beta N\left[E(m')-E(m)\right]_+\right)} \\ 
			&\qquad \times \sum_{\sigma \in \Ss_N[ m]} \exp(\beta \Delta(\sigma)) \sum_{\sigma' \in \Ss_N[m']}\mathds{1}_{\sigma' \sim  \sigma} \,\frac{\exp\left(-\beta \left[\tH(\sigma')-\tH(\sigma) \right]_+\right)}
			{\exp\Big(-\beta\left[H_N(\sigma')-H_N(\sigma)\right]_+\Big)}\\
			&
			\leq  N\,\frac{ Z_{\beta,N} }{2\tilde{Z}_{\beta,N}} \eee^{2\beta(1+h) }\sum_{m,m' \in \Gamma_N} 
			\frac{\phi_N (m,m')^2}{ \ti\Qq_{\beta,N}(m) \exp{\big(NI_N(m)\big)} \exp\left(-\beta N\left[E(m')-E(m)\right]_+\right)} \\ 
			&\qquad \times \Big[N \frac{1+m}{2}\mathds{1}_{m-\frac{2}{N}}(m') + N \frac{1-m}{2}\mathds{1}_{m+\frac{2}{N}}(m') \Big] \sum_{\sigma \in \Ss_N[ m]} \exp(\beta \Delta(\sigma)) ,\\
		\end{split}
	\end{equation}
	where we used Lemma~\ref{lem:claim1} to bound the sum over $\sigma'$, uniformly in $\si \in \Ss_N[ m]$. 
	Then, to bound the remaining sum over $\sigma$, we use the upper bound in Corollary~\ref{cor:theBound} (in the version with $\eee^{\,\beta \Delta(\sigma)}$, motivated by the Remark therein) with 
	\begin{equation}
		\begin{split}
			g(m)&=\sum_{m' \in \Gamma_N} 
			\frac{\phi_N (m,m')^2}{ \ti\Qq_{\beta,N}(m) \exp{\big(NI_N(m)\big)} \exp\left(-\beta N\left[E(m')-E(m)\right]_+\right)} \\ 
			&\qquad \times \Big[N \frac{1+m}{2}\mathds{1}_{m-\frac{2}{N}}(m') + N \frac{1-m}{2}\mathds{1}_{m+\frac{2}{N}}(m') \Big], 
		\end{split}
	\end{equation}
	obtaining, with notation \eqref{eq:Psnotation}, 
	\begin{equation}\label{eq:Dpsi}
		\begin{split}
			\Dd(\Psi_N ) 
			&\overset{P(s)}{\leq}N\,\frac{Z_{\beta,N} }{2\tilde{Z}_{\beta,N}}\sum_{m,m' \in \Gamma_N} 
			\frac{\eee^{s+2\beta(1+h) + \alpha} \,\phi_N (m,m')^2   \exp \big(-NI_N(m)\big)}{ \ti\Qq_{\beta,N}(m) \exp{\big(NI_N(m)\big)} \exp\left(-\beta N\left[E(m')-E(m)\right]_+\right)} \,  \\
			&\qquad  \times \Big[N \frac{1+m}{2}\mathds{1}_{m-\frac{2}{N}}(m') + N \frac{1-m}{2}\mathds{1}_{m+\frac{2}{N}}(m') \Big]
			\left(1+o(1)\right) \\
			&= \frac{Z_{\beta,N} }{2\tilde{Z}_{\beta,N}}	\eee^{s+2\beta(1+h) +\alpha} \sum_{m,m' \in \Gamma_N}  
			\frac{\phi_N (m,m')^2 \exp\big(-2N I_N(m)\big)}{ \tilde{\mathcal Q}_{\beta,N}(m) \,\tilde{r}_N(m,m')}\,  \\
			&\qquad  \times \Big[N \frac{1+m}{2}\mathds{1}_{m-\frac{2}{N}}(m') + N \frac{1-m}{2}\mathds{1}_{m+\frac{2}{N}}(m') \Big]^2
			\left(1+o(1)\right), \\
		\end{split}
	\end{equation}
	where in the equality we only used \eqref{eq:rates_r}.
	
	Now we first substitute $\phi_N$ defined in \eqref{eq:defPhi}  into \eqref{eq:Dpsi} and then use reversibility to obtain 
	\begin{equation}\label{eq:lastPsi}
		\begin{split}
			\Dd(\Psi_N ) 
			&\overset{P(s)}{\leq}
			\frac{Z_{\beta,N} }{2\tilde{Z}_{\beta,N}} \,\eee^{s+2\beta(1+h) + \alpha}   \sum_{\substack{m_1\leq m < m_2,\\ m \in \GN}} \frac{1}{ \tilde{\mathcal{Q}}_{\beta,N}(m)\,\tilde{r}_N\left(m,m+\frac{2}{N}\right)} \left(1+o(1)\right) \\
			&+\frac{Z_{\beta,N} }{2\tilde{Z}_{\beta,N}} \,\eee^{s+2\beta(1+h) + \alpha}   \sum_{\substack{m_1< m \leq  m_2,\\ m \in \GN}}\frac{1}{ \tilde{\mathcal{Q}}_{\beta,N}(m-\frac{2}{N})\,\tilde{r}_N\left(m-\frac{2}{N},m\right)} \left(1+o(1)\right) \\
			& = \frac{Z_{\beta,N} }{\,\tilde{Z}_{\beta,N}} \eee^{s+2\beta(1+h) + \alpha}   
			\sum_{m_1\leq m < m_2} 
			\frac{1}{\tilde{\mathcal{Q}}_{\beta,N}(m)\,\tilde{r}_N\left(m,m+\frac{2}{N}\right)}\left(1+o(1)\right), \\
		\end{split}
	\end{equation}
	where the last equality follows noticing that the two sums in the previous step are equal.
	
	Therefore, by \eqref{eq:step1lower} and \eqref{eq:lastPsi}, we obtain
	\begin{equation}\label{eq:finlowerb}
		\begin{split}
			Z_{\beta,N}\,&\capa(\Ss_N[m_1],\Ss_N[m_2]) 	\geq \frac{Z_{\beta,N}}{\Dd(\Psi_N)} \\
			&\overset{P(s)}{\geq} \tilde{Z}_{\beta,N}   \, \eee^{-s-2\beta(1+h) - \alpha}   \left[ \sum_{m_1\leq m < m_2}  \frac{1}{ \tilde{\mathcal{Q}}_{\beta,N}(m)\,\tilde{r}_N\left(m,m+\frac{2}{N}\right)} \right]^{-1}\left(1+o(1)\right) \\
			& = \tilde{Z}_{\beta,N}\, \eee^{-s-2\beta(1+h) - \alpha}  \, \capa^{\text{CW}}(\Ss_N[m_1],\Ss_N[m_2])\left(1+o(1)\right),
		\end{split}
	\end{equation}
	where  we used notation \eqref{eq:Psnotation} and we noticed that the inverse of the expression appearing 
	in brackets in \eqref{eq:finlowerb} gives exactly the capacity for the CW model. 
	Indeed, 
	that expression gives exactly the capacity for the one-dimensional random walk in $\GN$ which is the projection of the CW dynamics onto the magnetisation space $\GN$ 
	(see the formula for the capacity in Bovier and den Hollander~\cite[Section 7.1.4, (7.1.60)]{BdH}). 
	Using lumping techniques exactly as at the end of the proof of Theorem~\ref{thm:upperB} (end of Section~\ref{lumping}), we have that the aforementioned capacity equals the CW capacity.
\end{proof}

\medskip


\section{Estimates on the harmonic function} \label{sec:harmonicSum}
As pointed out in Section~\ref{sec:ideaproof}, the proof of Theorem~\ref{teo:exptime} relies on  sharp estimates on capacities, carried out in Section~\ref{sec:capacities},  and estimates on the harmonic function. We entirely devote this section to obtain asymptotic 
upper and lower bounds on the numerator in \eqref{eq:EtauCap1}, which is given by the following sum 
\begin{equation}\label{eq:harmonicsum}
	\sum_{\si \in \Ss_N} \mu_{\beta,N}(\si)   h^N_{m_-,m_+}(\si),
\end{equation}
that is to give the proof of
Theorem~\ref{thm:upperHarm}.

In order to control the sum \eqref{eq:harmonicsum}, one 
generally uses a renewal argument which relies again on estimates over capacities. However, in our case
this is not possible, due to the fact that capacities of single spins are too small.

We first  prove 
the upper bound and then give some details about how to prove the lower bound, which is very similar 
and more straightforward. Our proof follows Bianchi, Bovier and Ioffe~\cite[Section 6]{BBI09}.

\subsection{Notation and decomposition of the space} \label{sec:notat_delta_theta}
Before starting with the proof, we  introduce some notation. We refer to Figure~\ref{fig:wells} below
for a better visual understanding of the objects we are defining.

Recall that we denote  by 
$m_+$ the global minimum, by $m_-$ the local minimum,
and by $m^*$ the local maximum of $ f_{\beta}(\cdot)$ in $[-1,1]$, where $ f_{\beta}(\cdot)= \lim_{N \to \infty} f_{\beta,N}(\cdot)$, 
defined in \eqref{eq:f_betaN}.
We want to decompose the space $\GN$ (and eventually the set of spin configurations $\Ss_N$) 
according to the values of $f_{\beta}$. The notation and the decomposition are organised in 4 steps. 

\medskip
\textbf{\emph{Step 1.}} \label{step1Sec4.1} First, let $\delta>0$ be small  in a way which will become clear later, and define the set
\begin{equation}\label{eq:Udelta}
	U_{\delta}=\big\{m \in [-1,1]: f_{\beta}(m)\leq f_{\beta}(m_-)+\delta  \big\}.
\end{equation}
We write $U_{\delta}^c= [-1,1] \setminus U_{\delta}$ and we denote by $U_{\delta}(m)$ the connected component of $U_{\delta}$ 
containing $m$. Note that $\{m_-,m_+\}\in U_{\delta}$.
In general, $U_{\delta}(m_-)$ and $U_{\delta}(m_+)$ may have non empty intersection, but we \emph{choose $\delta$} such that  
$m^* \notin U_{\delta}$, implying that $U_{\delta}$ is partitioned by the \emph{disjoint} sets $U_{\delta}(m_-)$ and $U_{\delta}(m_+)$. 
For this to hold, it suffices to take $\delta < f_{\beta}(m^*) -f_{\beta}(m_-)$.
Moreover, we \emph{choose} $\delta$ also such that $-1\notin U_{\delta}(m_-)$. For this to hold, it suffices to take $\delta < f_{\beta}(-1) -f_{\beta}(m_-)$. Thus, we \emph{choose}  $\delta < \min{\big(f_{\beta}(-1), f_{\beta}(m^*)\big)} -f_{\beta}(m_-)$. These conditions are needed to prove \eqref{eq:part1} below.

Let us denote by $m_\delta$ the unique point in $(m^*, m_+)$ such that 
\begin{equation}
	f_{\beta}(m_{\delta}) = f_{\beta}(m_-)+\delta.
\end{equation}

\medskip
\textbf{\emph{Step 2.}} With  $\delta$  chosen as above, we  define a sequence $(\delta_N)_{N\in \NN}$,
converging to $ \delta$ from below, 
such that the left extreme of $U_{\delta_N}(m_+)$ is in $\GN$. Specifically, 
we define $\delta_N$ as follows: 
\begin{equation}\label{eq:deltaN}
	\delta_N= \max \left\{  \bar{\delta}\in(0, \delta] \,: \,\exists \, m \in  U_{\delta}(m_+) \cap \GN \setminus [m_+,1] \text{ s.t. } f_\beta(m)= f_\beta(m_-)+\bar{\delta} \right\},
\end{equation}
for $N$ sufficiently large.
Moreover,  set
\begin{equation}
	U_{\delta,N}=U_{\delta_N} \cap \GN \text{, } \qquad 
	U_{\delta,N}^c= \GN \setminus  U_{\delta,N}\qquad  \text{ and } \qquad  
	U_{\delta,N}(m)=U_{\delta_N}(m) \cap \GN, 
\end{equation}
for all $m \in [-1,1]$. 
Thus, we have the partitions
\begin{equation}
	\GN=U_{\delta,N}(m_-) \cup U_{\delta,N}(m_+) \cup U_{\delta,N}^c
\end{equation}
and
\begin{equation}\label{eq:partit_SN}
	\Ss_N =\Ss_N\left[U_{\delta,N}(m_-)\right] \cup \Ss_N\left[m_+(N)\right] \cup  
	\Ss_N\big[U_{\delta,N}^c\big] \cup \Ss_N\left[U_{\delta,N}(m_+)\setminus\{m_+(N)\}\right].
\end{equation}	
\begin{remark}
	Notice that, for $N$ sufficiently large, ${U_{\delta,N}(m_-(N))=U_{\delta,N}(m_-)}$ and $U_{\delta,N}(m_+(N))=U_{\delta,N}(m_+)$.  Furthermore, with these definitions, $m_{\delta_N} \in U_{\delta,N}$ and it is the left extreme of $ U_{\delta,N}(m_+)$. 
\end{remark}

\medskip
\textbf{\emph{Step 3.} }
Let $\varepsilon>0$ be arbitrarily small (the choice of $\varepsilon$ will be relevant in Section~\ref{sec:part3}). 
We denote by  $m_{\varepsilon}$ the only point in a small left neighbourhood of $m_+$, 
more precisely in $U_{\delta}(m_+)\setminus [m_+,1]$, such that
\begin{equation}\label{eq:mEps}
	f_{\beta}(m_{\varepsilon})=f_{\beta}(m_+) + \varepsilon.
\end{equation}
Let us define an $\varepsilon$-dependent parameter $\theta>0$ by
\begin{equation}\label{eq:deftheta}
	\theta=m_+-m_{\varepsilon}.
\end{equation}

\medskip

\textbf{\emph{Step 4.}} Similarly to Step 2, fixed $\varepsilon>0$, we want to define a sequence $(\varepsilon_N)_{N\in \NN}$ 
converging to $\varepsilon$ from below such that $m_{\varepsilon_N}$ is in $\GN$. More precisely, we define $\varepsilon_N$ as follows
\begin{equation}\label{eq:epsN}
	\varepsilon_N= \max \big\{  \bar{\varepsilon} \in(0,\varepsilon] \,: \,\exists \, m \in  U_{\delta,N}(m_+)  \setminus [m_+,1] \text{ s.t. } f_\beta(m)= f_\beta(m_+)+\bar{\varepsilon} \,\big\}.
\end{equation}
We will  use later that $m_{\varepsilon_N} \in U_{\delta,N}(m_+)$ and it satisfies $f_\beta(m_{\varepsilon_N})= f_\beta(m_+)+\varepsilon_N$.

Moreover, given $\varepsilon>0$, we define the sequence $(\theta_N)_{N \in \NN}$, analogously to \eqref{eq:deftheta}, by setting $\theta_N= m_+(N)-m_{\varepsilon_N}$. 
$\theta_N$ plays an important role in Lemma~\ref{lem:hitting6.11} below. 
Notice that $\lim_{N \to \infty} \theta_N = \theta$ and, if $m_+ \neq m_+(N)$, then ${f(m_{\varepsilon_N})-f(m_+(N))\neq \varepsilon_N}$.

\medskip

\begin{figure}
	\centering
	\begin{tikzpicture}
		\begin{axis}[
			xtick=\empty,
			ytick=\empty,
			extra y ticks={2.1192, 6.0356,-0.2798,4},
			extra y tick labels={$f_{\beta}(m_-)$, $$,$$,$f_{\beta}(m_-)+\delta$ }, 
			xmin=-1.1, xmax=1.1, 
			ymin=-1, 
			extra x ticks={-1,-0.936,-0.728, -0.418,-0.043, 0.303,0.7,0.772,1},
			extra x tick labels={-1 ,,  , , ,,,,1}, 
			]
			\addplot[blue, name path=curve,samples=300, smooth, domain=-1:1]
			plot (\x, { (16*\x^4 -18*\x^2-1.6*\x+6) }) ;
			\addlegendentry{$f_{\beta}$};
			
			\coordinate  (m1) at (axis cs: -0.936,-1);  
			\coordinate  (fm1) at (axis cs: -0.936,4);   	 
			\coordinate  (m-) at (axis cs: -0.728,-1);
			\coordinate  (fm-) at (axis cs: -0.728,2.1192);
			\coordinate  (m2) at (axis cs: -0.418,-1);  
			\coordinate  (fm2) at (axis cs: -0.418,4);  
			\coordinate (m*) at (axis cs: -0.043,-1);
			\coordinate (fm*) at (axis cs: -0.043,6.0356);
			\coordinate (mDelta) at (axis cs: 0.303,-1);
			\coordinate (fmDelta) at (axis cs: 0.303,4);
			\coordinate (mTheta) at (axis cs: 0.7,-1);
			\coordinate (fmTheta) at (axis cs: 0.7,-0.105);
			\coordinate  (m+) at (axis cs: 0.772,-1);
			\coordinate  (fm+) at (axis cs: 0.772,-0.2798);
			\coordinate (plus1) at (axis cs: 1,-1);
			
			\draw [red, line width=8pt] (m1) -- (m2); 
			\draw [red, line width=8pt] (mDelta) -- (plus1) ; 
			
			\draw [dashed] (m-) --  (fm-); 
			\draw [dashed] (m*) --  (fm*); 
			\draw [dashed] (m+) --  (fm+); 
			\draw (m1) --  (fm1); 
			\draw (m2) --  (fm2); 
			\draw (mDelta) --  (fmDelta); 
			\draw (mTheta) --  (fmTheta); 
			
			\draw [dashed, red, name path=plus_delta] (axis cs: -1,4) -- (axis cs: 1,4); 
			
		\end{axis}
		\foreach \m/\name in {m-/$m_-$,mDelta/$m_{\delta}$, m+/$m_+$} 
		{\node [anchor=north] at ([yshift=-3pt]\m) {\name};}  
		\node [anchor=north east] (labelmE) at ([yshift=-3pt] mTheta) {$m_{\varepsilon}$};  
		\node [anchor=north] at (m*) {$m^*$};  
		\node [outer sep=3.2pt] (pointmE) at (mTheta) {};
		\draw [<-] (labelmE) -- (pointmE);
		\draw [decorate,decoration={brace,amplitude=8pt,mirror}]
		([yshift=-16pt] m1) -- ([yshift=-16pt] m2) node [black,midway, below=8pt] {\footnotesize	$U_{\delta}(m_-)$};
		\draw [decorate,decoration={brace,amplitude=8pt,mirror}]
		([yshift=-16pt] mDelta) -- ([yshift=-16pt] plus1) node [black,midway, below=8pt] {\footnotesize	$U_{\delta}(m_+)$};
	\end{tikzpicture}
	\caption{Graph of $f_{\beta}$ and decomposition  of the magnetisation space $[-1,1]$: 
		the two intervals $U_{\delta}(m_-)$
		and $U_{\delta}(m_+)$ around the two minima are drawn, together with the special points 
		$m_{\delta}$, $m_{\varepsilon}$. $U_{\delta}$ is painted in red. } \label{fig:wells}
\end{figure}
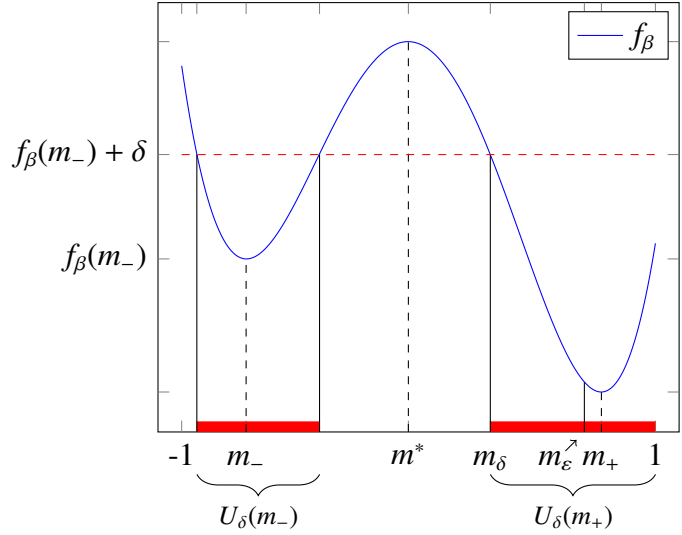

\subsection{Upper bound on the harmonic sum} \label{sec:upperHarm}
In this section we prove the first part of Theorem~\ref{thm:upperHarm} by giving an upper bound 
on the harmonic sum in \eqref{eq:harmonicsum}.

We will estimate the contribution of each set of the partition in \eqref{eq:partit_SN} to the sum in \eqref{eq:harmonicsum}. As one expects, the only relevant contribution will be given by the terms in $ \Ss_N[U_{\delta,N}(m_-)]$. Indeed,  $ \mu_{\beta,N}$  is very small in $\Ss_N[U_{\delta,N}^c]$ while $ h^N_{m_-,m_+}$   is very small in $\Ss_N[U_{\delta,N}(m_+)]$ and we will see the two contributions on these two sets turn out to be irrelevant. 

The main ingredients in the proof of the upper bound are Corollary~\ref{cor:sumQ} 
and  Lemma~\ref{lem:hittingP} below. The proof of the latter result is quite technical and it is postponed to Section~\ref{sec:lemmas_hitting}.

\begin{proof}[Proof of Theorem~\ref{thm:upperHarm}. Upper bound.]
	We are ready to start estimating the contributions of each disjoint set of the partition in \eqref{eq:partit_SN} to the sum in \eqref{eq:harmonicsum}.
	
	\medskip\noindent
	\textbf{Part 1.  Sum on $ \Ss_N[U_{\delta,N}(m_-)]$.}
	This will be the relevant part. 
	Using first that  $h^N_{m_-,m_+}(\si)\leq1$, \eqref{eq:defQ} and  \eqref{eq:sumQUpp2} of Corollary~\ref{cor:sumQ} with $\bar{g}(m)=\mathds{1}_{m\in U_{\delta,N}(m_-)}(m)$
	we obtain 
	\begin{equation}\label{eq:part1}
		\begin{split}
			&\sum_{\si \in \Ss_N[U_{\delta,N}(m_-)] } \mu_{\beta,N}(\si)  \,  h^N_{m_-,m_+}(\si) 
			\leq \sum_{m \in U_{\delta,N}(m_-)} \Qq_{\beta,N}(m) \\
			&\overset{P(s)}{\leq}\frac{ \eee^{s+\alpha}\left(1+o(1)\right)}{Z_{\beta,N}} 
			\sum_{m \in U_{\delta,N}(m_-)}  \exp{\left(-\beta N f_{\beta} (m)\right)} \sqrt{\frac{2}{\pi N (1-m^2)}}\\
			&=  \frac{\eee^{s+\alpha }\left(1+o(1)\right)\exp{\left(-\beta N f_{\beta} (m_-)\right)}}{Z_{\beta,N} \sqrt{(1-m_-^2) \, \beta  f''_{\beta} (m_-) }}.\\
		\end{split}
	\end{equation}
	In the second line we used our assumption of $\delta_N$ being small enough such that $-1\notin U_{\delta,N}(m_-)$ (see Section~\ref{step1Sec4.1}, Step 1).
	To obtain the last equality  we first approximated, for $N$ sufficiently large, the sum with an integral and then applied the saddle point method (see, for instance, de Bruijn~\cite[Chp 5.7]{Bru61}),  where  $m_-$ 
	is the maximum point of $-\beta f_{\beta}$ on the considered domain. Notice that here we use the fact that $m^* \notin U_{\delta,N}(m_-)$, which holds again for $\delta_N$ small enough (see Section~\ref{step1Sec4.1}, Step 1).
	More precisely, 
	\begin{equation}\label{eq:useLapl}
		\begin{split}
			\sum_{m \in U_{\delta,N}(m_-)}& \exp\Big(-\beta N f_{\beta} (m)\Big)\frac{1}{\sqrt{ (1-m^2)}} \\
			&\approx 
			\frac{N}{2} \int_{a}^{b}  \exp{\Big(-\beta N f_{\beta} (x)\Big)}\frac{1}{ \sqrt{(1-x^2)}} \dd x  \\
			&= \exp\Big(-\beta N f_{\beta} (m_-)\Big) \frac{1}{ \sqrt{(1-m_-^2) }} \sqrt{\frac{\pi N }{2\beta  f''_{\beta} (m_-) }}\left(1+o(1)\right),
		\end{split}
	\end{equation}
	where $-1<a,b \in \GN$ are the left and right extremes of $U_{\delta,N}(m_-)$, respectively.

	\smallskip
	\noindent
	\textbf{ Part 2.  Sum on $\Ss_N[m_+(N)]$.}
	Being by definition $ h^N_{m_-,m_+}(\si)=0$ 
	for all $\si \in \Ss_N[m_+(N)]$, we trivially have 
	\begin{equation}\label{eq:onm+}
		\sum_{\si \in \Ss_N[m_+(N)]} \mu_{\beta,N}(\si) \,  h^N_{m_-,m_+}(\si)= 0.
	\end{equation}
	
	\noindent
	\textbf{ Part 3.  Sum on $ \Ss_N[U_{\delta,N}^c]$.}
	
	Using $h^N_{m_-,m_+} \leq 1$ and  \eqref{eq:defQ}, we have
	\begin{equation}\label{eq:part2}
		\begin{split}
			&\sum_{\si \in \Ss_N[U_{\delta,N}^c]} \mu_{\beta,N} (\si) \,  h^N_{m_-,m_+}(\si) \leq \sum_{\si \in \Ss_N[U_{\delta,N}^c]} \mu_{\beta,N} (\si)  = \sum_{m \in U_{\delta,N}^c} \Qq_{\beta,N} (m)\\
			& = \sum_{m \in U_{\delta,N}^c\setminus\{1,-1 \}} \Qq_{\beta,N} (m) + \sum_{m \in U_{\delta,N}^c \cap \{1,-1 \}} \Qq_{\beta,N} (m)
			. \\
		\end{split}
	\end{equation} 
	
	We bound the right hand side using \eqref{eq:sumQUpp2} of Corollary~\ref{cor:sumQ} with $\bar{g}(m)=\mathds{1}_{m\in U_{\delta,N}^c}(m)$ obtaining
	\begin{equation}
		\begin{aligned}
			&\sum_{\si \in \Ss_N[U_{\delta,N}^c]} \mu_{\beta,N} (\si)\,  h^N_{m_-,m_+}(\si) \\
			&	\overset{P(s)}{\leq} \frac{ \eee^{s+\alpha}\left(1+o(1)\right)}{Z_{\beta,N}} \sum_{\substack{m \in U_{\delta,N}^c\setminus\{1,-1 \}}}\exp{\Big(-\beta N f_{\beta} (m)\Big) }\sqrt{\frac{2}{\pi N (1-m^2)}} \\
			&	\qquad
			+ \frac{ \eee^{s+\alpha}\left(1+o(1)\right)}{Z_{\beta,N}} \sum_{m \in U_{\delta,N}^c \cap \{1,-1 \}}\exp\Big(-\beta N f_{\beta} (m)\Big) \\
			& \leq \frac{\eee^{s+\alpha }\left(1+o(1)\right) }{Z_{\beta,N}}
			\exp{\Big(-\beta N  \left(f_{\beta} (m_-) + \delta_N \right)\Big)}
			\left(\sqrt{\frac{2}{\pi N}} \sum_{m \in U_{\delta,N}^c \setminus\{1,-1 \}} \frac{1}{\sqrt{(1-m^2)}} + 2 \right), 
		\end{aligned}
	\end{equation}
	where in the last inequality we used the bound $f_{\beta}(m)\geq f_{\beta} (m_-) + \delta_N$ given by the definition of $ U_{\delta,N}^c$ (see \eqref{eq:Udelta}).
	
	\smallskip\noindent
	\textbf{ Part 4.   Sum on $\Ss_N[U_{\delta,N}(m_+)\setminus\{m_+(N)\}]$.} \label{sec:part3}
	Using \eqref{eq:h_prop} and the fact that, for any $\si \in \Ss_N$ such that $m(\si)>m_+(N)$,
	$\PP_{\si}\left(\tau_{\Ss_N[m_-(N)]} < \tau_{\Ss_N[m_+(N)]}\right)= 0$, 
	we get 
	\begin{equation}\label{eq:part3}
		\begin{split}
			\sum_{{\si \in \Ss_N[U_{\delta,N}(m_+)\setminus\{m_+(N)\}]}} \hspace{-25pt}\mu_{\beta,N}(\si) \,  h^N_{m_-,m_+}(\si) 
			=	\sum_{\si \in \Ss_N\left[[m_{\delta_N},m_+(N))\right]} \hspace{-13pt}\mu_{\beta,N} (\si) \,\PP_{\si}\left(\tau_{\Ss_N[m_-(N)]} < \tau_{\Ss_N[m_+(N)]}\right).
		\end{split}
	\end{equation}
	Thus, applying Lemma~\ref{lem:hittingP} below, the following holds for any $\gamma \in (0,1)$
	\begin{equation} \label{eq:Um+}
		\begin{split}
			&\sum_{\si \in \Ss_N[U_{\delta,N}(m_+)\setminus\{m_+(N)\}]} \mu_{\beta,N}(\si)\, h^N_{m_-,m_+}(\si) \\
			&\leq   \exp{\Big(-\beta N (1-\gamma)f_{\beta}(m_-)\Big)}\sum_{m \in [m_{\delta_N},m_+(N))} \Qq_{\beta,N} (m)\,
			\Big[ \exp{\Big(\beta N (1-\gamma)f_{\beta}(m)\Big)}\Bigr.\\
			&\qquad \Bigl.+\exp{\Big(\beta N (1-\gamma)\left(f_{\beta}(m_+)+3\,\varepsilon_N\right)+N\ell_N(\theta_N)\Big)}\Big]\exp{\Big(-\beta N (1-\gamma)\, \delta_N \Big)} (1+o(1)).
		\end{split}
	\end{equation} 
	We use \eqref{eq:sumQUpp2} of Corollary~\ref{cor:sumQ} with $\bar{g}(m)$ defined by
	\begin{equation}
		\bar{g}(m)=\left[ \exp{\Big(\beta N (1-\gamma)f_{\beta}(m)\Big)} +\exp{\Big(\beta N (1-\gamma)\left(f_{\beta}(m_+)+3\,\varepsilon_N\right)+N\ell_N(\theta_N)\Big)}\right],
	\end{equation}
	for $m \in [m_{\delta_N},m_+(N))$ and $\bar{g}(m)=0$ for $m \in \GN\setminus[m_{\delta_N},m_+(N))$.
	Thus, we obtain 
	\begin{equation}\label{eq:better_b}
		\begin{split}
			&\sum_{\si \in \Ss_N[U_{\delta,N}(m_+)\setminus\{m_+(N)\}]} \mu_{\beta,N}(\si) \,  h^N_{m_-,m_+}(\si)  \\
			&\overset{P(s)}{\leq} \frac{\eee^{s+\alpha}\left(1+o(1)\right) }{Z_{\beta,N}}
			\exp{\Big(-\beta N (1-\gamma)\left(f_{\beta}(m_-)+\delta_N\right)\Big)}
			\sum_{m \in [m_{\delta_N},m_+(N))} \exp{\Big(-\beta N f_{\beta} (m)\Big)} \\
			& \, \, \,\times \sqrt{\frac{2}{\pi N (1-m^2)}}   \Big[\exp{\Big(\beta N (1-\gamma)f_{\beta}(m)\Big)} +
			\exp{\Big(\beta N (1-\gamma)\left(f_{\beta}(m_+)+3\varepsilon_N\right)+N\ell_N(\theta_N)\Big)}
			\Big]\\
			& \leq \frac{\eee^{s+\alpha} \left(1+o(1)\right)}{Z_{\beta,N}} \exp{\Big(-\beta N (1-\gamma)\left(f_{\beta}(m_-)+\delta_N\right)\Big)} \sqrt{\frac{2N}{\pi (1-m_+^2)}} \\
			&\, \, \, \,\times  
			\Big[\exp{\Big(-\gamma \,\beta N f_{\beta} (m_+)\Big)} + 
			\exp{\Big(\beta N (1-\gamma)\left(f_{\beta}(m_+)+3\varepsilon_N\right)+N\ell_N(\theta_N) -\beta N f_{\beta} (m_+)\Big)} \Big]\\
			&= \frac{\eee^{s+\alpha}\left(1+o(1)\right) }{Z_{\beta,N}}\exp{\Big(-\beta N f_{\beta} (m_-)\Big)} \sqrt{\frac{2N}{\pi  (1-m_+^2)}} \exp{\Big(-\gamma \,\beta N[ f_{\beta} (m_+)-f_{\beta} (m_-)] \Big)} \\
			& \, \, \, \, \times   \exp{\Big(-\beta N (1-\gamma)(\delta_N -3\,\varepsilon_N) +N\ell_N(\theta_N)\Big)}
			\Big[ \exp{\Big(-\beta N (1-\gamma)3\,\varepsilon_N -N\ell_N(\theta_N)\Big)}+1\Big] 
			\\
			&\leq \frac{\eee^{s+\alpha}\left(1+o(1)\right)}{Z_{\beta,N}} \exp{\Big(-\beta N f_{\beta} (m_-)\Big)} \sqrt{\frac{2}{\pi  (1-m_+^2)}} \\
			&\, \, \, \,\times  
			\exp\left[-\beta N\left(\gamma\,[ f_{\beta} (m_+)-f_{\beta} (m_-)] +(1-\gamma)(\delta_N -3\,\varepsilon_N) -\tfrac{1}{\beta}\ell_N(\theta_N) -\varepsilon_N \right)\right].
		\end{split}
	\end{equation}
	In the last step we embedded $[\,\exp{\big(-\beta N (1-\gamma)3\,\varepsilon_N -N\ell_N(\theta_N)\big)}+1]$   in the already present $(1+o(1))$ 
	and bounded $\sqrt{N}$ by $\exp{\big(-\beta N (-\varepsilon_N)\big)}$, because for $N$ large enough $\tfrac{\log(N)}{2\beta N} \leq \varepsilon_N$ (which converges to $\varepsilon>0$, see Step 4 in Section \ref{sec:notat_delta_theta}). 
	
	Now we prove that this part is not relevant compared to the right hand side of \eqref{eq:part1}. In particular, we  show that,
	for a certain choice of $\gamma$,
	\begin{equation}
		c_N=\gamma [f_{\beta}(m_+) -f_{\beta}(m_-)] +(1-\gamma)( \delta_N -3\,\varepsilon_N)-\tfrac{1}{\beta}\ell_N(\theta_N) -\varepsilon_N
	\end{equation}
	is positive and its limit,
	\begin{equation} 
		\lim_{N\to \infty} c_N=\gamma \,[f_{\beta}(m_+) -f_{\beta}(m_-)] +(1-\gamma)( \delta -3\,\varepsilon)-\tfrac{\theta}{2\beta}\big(  \log (2) +3 - \log(1-m_+)  \big) -\varepsilon,
	\end{equation}
	is positive and finite.	In order to achieve this, we  choose $\gamma \in (0,1)$ small enough, such that $c_N$ and its limit are positive, definitely in $N$. 
	In particular, we want to impose
	\begin{equation}\label{eq:gammaConstr}
		0<\gamma < \frac{\delta_N -4\,\varepsilon_N -\tfrac{1}{\beta}\ell_N(\theta_N)}{f_{\beta}(m_-) -f_{\beta}(m_+) +\delta_N -3\,\varepsilon_N}<1,
	\end{equation}
	definitely in $N$, and
	\begin{equation}\label{eq:gammaConstr_1}
		0<\gamma < \frac{\delta -4\,\varepsilon -\tfrac{1}{\beta}\lim_{N \to \infty} \ell_N(\theta)}{f_{\beta}(m_-) -f_{\beta}(m_+) +\delta -3\,\varepsilon}<1.
	\end{equation}
	
	First, we notice that it is easy to check that the previous quantities are strictly smaller than $1$. Second, we want to show that 
	a strictly positive $\gamma$ satisfying \eqref{eq:gammaConstr}-\eqref{eq:gammaConstr_1}  exists.
	Note that $\ell_N(\theta_N)$, defined in \eqref{eq:elltheta}, has the following trivial upper bound for every $N$,
	\begin{equation}
		\ell_N(\theta_N)
		\leq \theta_N\left(\beta + \log 2 + O(\theta_N)\right).
	\end{equation}
	Thus, a sufficient condition is to choose, for $N$ large enough, $\gamma\geq \gamma_0$,
	where
	\begin{equation}\label{eq:gammaConstr_2}
		\gamma_0 =	
		\frac{\delta -4\,\varepsilon - \theta\left(1 +\tfrac{\log 2}{\beta} +O(\theta)\right)}{f_{\beta}(m_-) -f_{\beta}(m_+) +\delta}
	\end{equation}
	is clearly strictly positive. Indeed, we can choose $\varepsilon>0$ sufficiently small for the numerator on the left hand side of \eqref{eq:gammaConstr_2} to be positive, while $\theta$ is  small accordingly to $\varepsilon$ (see Section \ref{sec:notat_delta_theta}).
	We conclude by obtaining,
	for $N$ sufficiently large,
	\begin{multline}\label{eq:upper_part4}
		\sum_{\si \in \Ss_N[U_{\delta,N}(m_+)\setminus\{m_+(N)\}]}  \mu_{\beta,N}(\si) \,  h^N_{m_-,m_+}(\si)
		\overset{P(s)}{\leq} \frac{ \eee^{s+\alpha + o(1)}}{Z_{\beta,N}} \exp{\left(-\beta N (f_{\beta} (m_-)+c_N)\right)}  \sqrt{\tfrac{2}{\pi (1-m_+^2)}},
	\end{multline}
	where $0<c_N=O(1)$.
	
	\medskip\noindent
	\textbf{ Conclusion.}
	
	With the previous bounds at hand, we are now ready to conclude the proof of the upper bound. 
	Decomposing the sum over $\Ss_N$ using \eqref{eq:partit_SN}, and inserting the estimates we computed above
	into \eqref{eq:harmonicsum}, we obtain
	
	\begin{equation}\label{eq:sumUp}
		\begin{split}
			&\sum_{\si \in \Ss_N} \mu_{\beta,N}(\si) \,  h^N_{m_-,m_+}(\si) \\
			&\overset{P(s)}{\leq} \frac{\eee^{s+\alpha }\left(1+o(1)\right)}{Z_{\beta,N}}  \exp{\Big(-\beta N f_{\beta} (m_-)\Big)} \left[\eee^{-\beta N  \delta_N} 
			\left(\sqrt{\frac{2}{\pi N}} \sum_{m \in U_{\delta,N}^c \setminus\{ 1,-1 \}} \frac{1}{\sqrt{(1-m^2)}} + 2 \right)   \right.\\
			&\qquad \left. + \sqrt{\frac{2}{\pi \,(1-m_+^2)}}   \eee^{-\beta N c_N}  
			+ \frac{1}{ \sqrt{(1-m_-^2) \,\beta  f''_{\beta} (m_-) }}\right]\\
			& \leq \frac{\eee^{s+\alpha}}{Z_{\beta,N}}  \exp{\Big(-\beta N f_{\beta} (m_-)\Big)} \frac{1}{ \sqrt{(1-m_-^2) \, \beta  f''_{\beta} (m_-) }}\left(1+o(1)\right),
		\end{split}
	\end{equation}  
	concluding the proof.	
\end{proof}

\subsection{Some technical results} \label{sec:lemmas_hitting}
In this section we prove  Lemma~\ref{lem:hittingP}, which is pivotal in obtaining
the upper bound in Theorem~\ref{thm:upperHarm} (see \eqref{eq:Um+}).  The proof is quite involved, therefore we split it 
into subsequent technical results. 
Before starting the proof, we give a brief outline of this section. First, we state Lemma~\ref{lem:hittingP} and 
prove it via Lemmas~\ref{lem:hitting6.11}, \ref{lem:for_mTheta} and \ref{lem:hitting6.3}, which follow later on. 
Second, we give the proof of Lemmas~\ref{lem:hitting6.11} and \ref{lem:for_mTheta}.
The latter relies on Lemma~\ref{lem:hitting6.3}, which we subsequently prove using Lemma~\ref{lem:superHarm}.
We conclude the section proving Lemma~\ref{lem:superHarm}. 
Throughout this section we will use the notation introduced in Section~\ref{sec:notat_delta_theta}.

\begin{lemma}\label{lem:hittingP}
	For all $\sigma \in \Ss_N\left[[m_{\delta_N},m_+(N))\right]$, for all $\gamma \in (0,1)$ and $\varepsilon>0$, 
	\begin{equation}\label{eq:hittingP}
		\begin{split}
			\mathbb{P}_{\sigma}& \Big(\tau_{\Ss_N[m_-(N)]}<\tau_{\Ss_N[m_+(N)]}\Big)  \leq  \exp{\Big(-\beta N (1-\gamma)[f_{\beta}(m_-)+\delta_N]\Big)} (1+o(1)) \\
			&\qquad \times \Bigg[ \exp{\Big(\beta N (1-\gamma)f_{\beta}(m(\si))\Big)} +  
			\exp{\Big(\beta N (1-\gamma)[f_{\beta}(m_+)+3\varepsilon_N] +N\ell_N(\theta_N) \Big)}\Bigg],
		\end{split}
	\end{equation}
	where $\ell_N(\cdot)$ is defined in \eqref{eq:elltheta}.
\end{lemma}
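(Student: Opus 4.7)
The plan is as follows. I would start from the observation that $\PP_\sigma\big(\tau_{\Ss_N[m_-(N)]} < \tau_{\Ss_N[m_+(N)]}\big) = h^N_{m_-,m_+}(\sigma)$ is the equilibrium potential, and decompose the excursion from $\sigma$ according to whether the chain first reaches a small magnetisation-neighbourhood of $\Ss_N[m_+(N)]$. Concretely, let $\mathcal{N}_{\theta_N} := \Ss_N[[m_{\varepsilon_N}, m_+(N)]]$ be the set of configurations within magnetisation-distance $\theta_N$ of $m_+(N)$ (see Step~4 of Section~\ref{sec:notat_delta_theta}), and use the strong Markov property together with $h^N_{m_-,m_+} \leq 1$ to write
\begin{equation*}
h^N_{m_-,m_+}(\sigma) \;\leq\; \PP_\sigma\!\left(\tau_{\Ss_N[m_-(N)]} < \tau_{\mathcal{N}_{\theta_N}}\right) \;+\; \sup_{\sigma' \in \mathcal{N}_{\theta_N} \setminus \Ss_N[m_+(N)]} h^N_{m_-,m_+}(\sigma').
\end{equation*}
Each of the two summands is designed to produce one of the two exponential terms in \eqref{eq:hittingP}.

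For the first piece (the role of Lemma~\ref{lem:hitting6.3}), I would construct a super-harmonic function $V : \Ss_N \to \RR_{\geq 0}$ of the form $V(\sigma) = C\exp\!\big(\beta N(1-\gamma)\, f_\beta(m(\sigma))\big)$. Once the $(1-\gamma)$-margin absorbs the typical size of the random perturbation $\Delta$ from \eqref{eq:delta}, this $V$ is super-harmonic for the Metropolis generator on $\Ss_N \setminus \big(\Ss_N[m_-(N)] \cup \mathcal{N}_{\theta_N}\big)$, and the maximum principle gives
\begin{equation*}
\PP_\sigma\!\left(\tau_{\Ss_N[m_-(N)]} < \tau_{\mathcal{N}_{\theta_N}}\right) \;\leq\; \frac{V(\sigma)}{\min_{\sigma'' \in \Ss_N[m_-(N)]} V(\sigma'')}.
\end{equation*}
This produces a factor $\exp\!\big(-\beta N(1-\gamma)\,[f_\beta(m_-) - f_\beta(m(\sigma))]\big)$; the remaining $-\beta N(1-\gamma)\delta_N$ in \eqref{eq:hittingP} comes from renormalising against the boundary $m_{\delta_N}$ of the right well (recall $f_\beta(m_{\delta_N}) = f_\beta(m_-) + \delta_N$), which is the natural place to stop the super-harmonic comparison.

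For the second piece, I would apply Lemmas~\ref{lem:hitting6.11} and \ref{lem:for_mTheta} to bound $\sup_{\sigma' \in \mathcal{N}_{\theta_N}} h^N_{m_-,m_+}(\sigma')$. Following the Bianchi, Bovier and Ioffe strategy of~\cite{BBI09}, the hitting probability from a configuration at magnetisation-distance $\theta_N$ of $m_+(N)$ is bounded by the Arrhenius cost across the barrier, roughly $\exp\!\big(-\beta N(1-\gamma)[f_\beta(m_{\delta_N}) - f_\beta(m_+)]\big)$, times the entropic factor $\exp(N\ell_N(\theta_N))$ that counts configurations in $\mathcal{N}_{\theta_N}$. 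The extra $3\varepsilon_N$ inside the exponent absorbs (i) the discrepancy $f_\beta(m_{\varepsilon_N}) - f_\beta(m_+) = \varepsilon_N$, (ii) the discretisation error from working on $\GN$ rather than $[-1,1]$, and (iii) the Taylor remainder of $f_\beta$ on $\mathcal{N}_{\theta_N}$.

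The main obstacle is verifying the super-harmonicity of $V$ in the random landscape. Comparing the RDCW transition probabilities \eqref{eq:rates} with their CW counterparts, the discrepancy is governed by the single-spin-flip increments $\Delta(\sigma') - \Delta(\sigma)$ appearing in \eqref{eq:DeltaDiff}, which must be controlled uniformly in $\sigma$. The clean way to proceed is to first check the super-harmonic inequality at the mesoscopic level, applying the sub-Gaussian bounds of Proposition~\ref{pro:Ysubg} and Corollary~\ref{cor:theBound} to pass between the RDCW Gibbs weights on $\Ss_N[m]$ and their CW analogues. The parameter $\gamma$ is precisely the slack required so that the sub-Gaussian fluctuations are dominated with $\PP_J$-probability converging to one, and the resulting penalty $(1-\gamma)$ in the exponent is exactly what appears in \eqref{eq:hittingP}. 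Once this super-harmonic inequality is established, combining it with the decomposition above and the near-minimum estimate from Lemmas~\ref{lem:hitting6.11} and \ref{lem:for_mTheta} yields the claimed bound.
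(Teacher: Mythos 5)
Your overall architecture matches the paper's: decompose according to whether the chain hits the $m_{\varepsilon_N}$-shell before $\Ss_N[m_-(N)]$, bound the outer excursion by a super-harmonic barrier function $\exp\big(\beta N(1-\gamma)f_\beta(m(\cdot))\big)$ (Lemma~\ref{lem:hitting6.3}), and control the inner contribution via the path-escape estimate that produces the $\eee^{N\ell_N(\theta_N)}$ factor (Lemmas~\ref{lem:hitting6.11} and \ref{lem:for_mTheta}). The role of $\gamma$ and the sources of the $3\varepsilon_N$ are also identified correctly.

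There is one genuine gap. Your decomposition
\[
h^N_{m_-,m_+}(\sigma)\;\leq\;\PP_\sigma\big(\tau_{\Ss_N[m_-(N)]}<\tau_{\mathcal N_{\theta_N}}\big)\;+\;\sup_{\sigma'\in\mathcal N_{\theta_N}\setminus\Ss_N[m_+(N)]}h^N_{m_-,m_+}(\sigma')
\]
is \emph{vacuous} whenever $\sigma\in\mathcal N_{\theta_N}$: the supremum on the right is taken over a set that contains $\sigma$, so the inequality reduces to $h^N(\sigma)\leq\sup h^N$. In particular, it cannot by itself deliver a bound on $\sup_{\mathcal N_{\theta_N}}h^N$; you still need a separate argument to break that circularity. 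The paper does this via a renewal (fixed-point) step at the shell $\Ss_N[m_{\varepsilon_N}]$: for $\sigma$ there, the strong-Markov decomposition gives
\[
h^N(\sigma)\;\leq\;\PP_\sigma\big(\tau_{\Ss_N[m_-(N)]}<\tau_{\Ss_N[\{m_+(N),m_{\varepsilon_N}\}]}\big)+\Big(\max_{\eta\in\Ss_N[m_{\varepsilon_N}]}h^N(\eta)\Big)\,\PP_\sigma\big(\tau_{\Ss_N[m_{\varepsilon_N}]}<\tau_{\Ss_N[m_+(N)]}\big),
\]
and then Lemma~\ref{lem:hitting6.11} gives $\PP_\sigma\big(\tau_{\Ss_N[m_{\varepsilon_N}]}<\tau_{\Ss_N[m_+(N)]}\big)\leq 1-\eee^{-N\ell_N(\theta_N)}(1+o(1))$, so that, after taking the maximum over $\sigma\in\Ss_N[m_{\varepsilon_N}]$, the term $\max_\eta h^N(\eta)$ appears on both sides with a factor strictly less than $1$ and can be solved for. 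This is where the entropy factor $\eee^{N\ell_N(\theta_N)}$ actually enters, and it is why Lemma~\ref{lem:hitting6.11} is a \emph{lower} bound on an escape probability rather than, as you wrote, a count of configurations in $\mathcal N_{\theta_N}$. Without spelling out this renewal step, "apply Lemmas~\ref{lem:hitting6.11} and \ref{lem:for_mTheta} to bound the sup" is not yet a proof.
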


\begin{proof}
	For all $\sigma \in \Ss_N\left[[m_{\delta_N},m_+(N))\right]$, we have
	\begin{equation}
		\begin{split}
			&\mathbb{P}_{\sigma} \Big(\tau_{\Ss_N[m_-(N)]}<\tau_{\Ss_N[m_+(N)]}\Big)   \\
			&= \mathbb{P}_{\sigma} \left(\tau_{\Ss_N[m_-(N)]}<\tau_{ \Ss_N[m_+(N)]}, \tau_{\Ss_N[m_-(N)]}<\tau_{\Ss_N[m_{\varepsilon_N}]}\right) \\
			& \qquad + \sum_{\eta \in \Ss_N[m_{\varepsilon_N}] } \PP_{\si}\left(\tau_{\Ss_N[m_-(N)]}<\tau_{\Ss_N[m_+(N)]}, \tau_{\eta} < \tau_{ \Ss_N[\{m_{\varepsilon_N},m_-(N),m_+(N)\}]  }\right) \\
			& = \mathbb{P}_{\sigma} \left(\tau_{\Ss_N[m_-(N)]}<\tau_{ \Ss_N[\{m_+(N), m_{\varepsilon_N}\}]}\right)    \\
			&\qquad + \sum_{\eta \in \Ss_N[m_{\varepsilon_N}] } \PP_{\si}\left(\tau_{\Ss_N[m_-(N)]}<\tau_{\Ss_N[m_+(N)]}\,
			\Big| \,\tau_{\eta} < \tau_{ \Ss_N[\{m_{\varepsilon_N},m_-(N),m_+(N)\}]}\right) \\
			&\qquad \qquad \times   \PP_{\si}\left(\tau_{\eta} < \tau_{ \Ss_N[\{m_{\varepsilon_N},m_-(N),m_+(N)\}]  }\right), 
		\end{split}
	\end{equation} 
	where we notice that,
	\begin{equation}
		\begin{split}
			\PP_{\si}&\left(\tau_{\Ss_N[m_-(N)]}<\tau_{\Ss_N[m_+(N)]}\,\Big| \, \tau_{\eta} < \tau_{ \Ss_N[\{m_{\varepsilon_N},m_-(N),m_+(N)\}]}\right) \\
			&	\quad= \PP_{\eta}\Big(\tau_{\Ss_N[m_-(N)]}<\tau_{\Ss_N[m_+(N)]}\Big).
		\end{split}
	\end{equation}
	Using the Markov property and taking the maximum of the first factor out of the sum, we have that, 
	for all $\sigma \in \Ss_N\big[[m_{\delta_N},m_+(N))\big]$,
	\begin{equation}\label{eq:allSigma}
		\begin{split}
			&\mathbb{P}_{\sigma} \Big(\tau_{\Ss_N[m_-(N)]}<\tau_{\Ss_N[m_+(N)]}\Big)   \\
			& \leq \mathbb{P}_{\sigma} \left(\tau_{\Ss_N[m_-(N)]}<\tau_{ \Ss_N[\{m_+(N), m_{\varepsilon_N}\}]}\right) \\
			&\quad+ \left(\max_{\eta \in \Ss_N[m_{\varepsilon_N}] } \PP_{\eta} \left(\tau_{\Ss_N[m_-(N)]}<\tau_{\Ss_N[m_+(N)]}\right)  \right)
			\sum_{\eta \in \Ss_N[m_{\varepsilon_N}] } \PP_{\si}\left(\tau_{\eta} < \tau_{ \Ss_N[\{m_{\varepsilon_N},m_-(N),m_+(N)\}]  }\right) \\
			& = \mathbb{P}_{\sigma} \left(\tau_{\Ss_N[m_-(N)]}<\tau_{ \Ss_N[\{m_+(N), m_{\varepsilon_N}\}]}\right) \\
			& \quad + \left(\max_{\eta \in \Ss_N[m_{\varepsilon_N}] } \PP_{\eta} \left(\tau_{\Ss_N[m_-(N)]}<\tau_{\Ss_N[m_+(N)]}\right)  \right)
			\PP_{\si}\left(\tau_{\Ss_N[m_{\varepsilon_N}]} < \tau_{ \Ss_N[\{m_-(N),m_+(N)\}]  }\right) \\
			& \leq \mathbb{P}_{\sigma} \left(\tau_{\Ss_N[m_-(N)]}<\tau_{ \Ss_N[\{m_+(N), m_{\varepsilon_N}\}]}\right) \\
			&\quad+ \left(\max_{\eta \in \Ss_N[m_{\varepsilon_N}] } \PP_{\eta} \left(\tau_{\Ss_N[m_-(N)]}<\tau_{\Ss_N[m_+(N)]}\right)  \right)
			\PP_{\si}\left(\tau_{\Ss_N[m_{\varepsilon_N}]} < \tau_{ \Ss_N[m_+(N)]  }\right). \\
		\end{split}
	\end{equation}

	We first consider the case  $\sigma \in \Ss_N[m_{\varepsilon_N}]$. By Lemma~\ref{lem:hitting6.11}, we get
	\begin{equation}\label{eq:use611}
		\begin{split}
			\mathbb{P}_{\sigma}& \Big(\tau_{\Ss_N[m_-(N)]}<\tau_{\Ss_N[m_+(N)]}\Big)   
			\leq  \mathbb{P}_{\sigma} \left(\tau_{\Ss_N[m_-(N)]}<\tau_{ \Ss_N[\{m_+(N), m_{\varepsilon_N}\}]}\right) \\
			&+ \left(\max_{\eta \in \Ss_N[m_{\varepsilon_N}] } \PP_{\eta} \left(\tau_{\Ss_N[m_-(N)]}<\tau_{\Ss_N[m_+(N)]}\right)  \right) 	\left(1- \eee^{- N \ell_N(\theta_N)}(1+o(1))\right). \\
		\end{split}
	\end{equation} 
	Taking the maximum over $\si$ and noticing that the same term appears in both right and left hand side of the inequality, we obtain
	\begin{equation} 
		\begin{split}
			&\max_{\si \in \Ss_N[m_{\varepsilon_N}]} \mathbb{P}_{\sigma} \Big(\tau_{\Ss_N[m_-(N)]}<\tau_{\Ss_N[m_+(N)]}\Big)   \\ 
			&	\leq \max_{\si \in \Ss_N[m_{\varepsilon_N}]} \mathbb{P}_{\sigma} \left(\tau_{\Ss_N[m_-(N)]}<\tau_{ \Ss_N[\{m_+(N), m_{\varepsilon_N}\}]}\right)\,
			\eee^{N \ell_N(\theta_N)}\,(1+o(1)) \\
			& \leq \exp\Big(-\beta N \, (1-\gamma)\left[f_{\beta}(m_-) + \delta_N-f_{\beta}\left(m_{\varepsilon_N}-\tfrac{2}{N}\right)  -\varepsilon_N\right]-N\ell_N(\theta_N) \Big)(1+o(1)), \\
		\end{split}
	\end{equation}	
	where we used Lemma~\ref{lem:for_mTheta}.
	
	By Taylor expansion of $f_{\beta}\left(m_{\varepsilon_N}-\tfrac{2}{N}\right)$ and definition of $m_{\varepsilon_N}$,  we get
	\begin{equation} \label{eq:maxTheta}
		\begin{split}
			&\max_{\si \in \Ss_N[m_{\varepsilon_N}]} \mathbb{P}_{\sigma} \Big(\tau_{\Ss_N[m_-(N)]}<\tau_{\Ss_N[m_+(N)]}\Big)   \\
			& \leq   \exp\Big(-\beta N \, (1-\gamma)\left[f_{\beta}(m_-) + \delta_N- 3\varepsilon_N -f_{\beta}(m_+) \right]-N\ell_N(\theta_N)\Big)(1+o(1)),
		\end{split}
	\end{equation}	
	where the last inequality holds for $N$ sufficiently large. Here we bounded the Taylor expansion error $O\left(\tfrac{1}{N}\right)$ with $\varepsilon_N$, which converges to $\varepsilon>0$ (see Step 4 in Section \ref{sec:notat_delta_theta}).

	Now we  consider the case where $\sigma \in \Ss_N\big[[m_{\delta_N},m_+(N)) \setminus \{m_{\varepsilon_N}\}\big]$. Going back to \eqref{eq:allSigma} and using again \eqref{eq:maxTheta}, we obtain
	\begin{equation}\label{eq:last_step}
		\begin{split}
			\mathbb{P}_{\sigma} &\Big(\tau_{\Ss_N[m_-(N)]}<\tau_{\Ss_N[m_+(N)]}\Big)
			\leq \mathbb{P}_{\sigma} \left(\tau_{\Ss_N[m_-(N)]}<\tau_{ \Ss_N[\{m_+(N), m_{\varepsilon_N}\}]}\right)   \\
			& 	\qquad+ \exp\Big(-\beta N (1-\gamma)\left[f_{\beta}(m_-) + \delta_N - 3\varepsilon_N-f_{\beta}(m_+) \right]-N\ell_N(\theta_N)\Big)(1+o(1))	\\
			& \leq \exp{\Big(-\beta N (1-\gamma)[f_{\beta}(m_-)+\delta_N]\Big)}(1+o(1)) \\
			&\qquad \times \Bigg[ \exp{\Big(\beta N (1-\gamma)f_{\beta}(m(\si))\Big)} +  
			\exp{\Big(\beta N (1-\gamma)[f_{\beta}(m_+)+3\varepsilon_N] +N\ell_N(\theta_N) \Big)}\Bigg].
		\end{split}
	\end{equation}  
	In the last inequality we used Lemma~\ref{lem:hitting6.3}, which holds for $\si \in \Ss_N[ [m_{\delta_N},m_{\varepsilon_N})]$, 
	and that $\mathbb{P}_{\sigma}\Big(\tau_{\Ss_N[m_-(N)]}<\tau_{ \Ss_N[\{m_+(N), m_{\varepsilon_N}\}]}\Big) =0$
	for all $\si \in \Ss_N[ (m_{\varepsilon_N},m_+(N))]$. 
	
\end{proof}

\begin{remark}
	In Lemma~\ref{lem:hittingP} one might try to further bound the right hand side of \eqref{eq:hittingP} using that ${f_{\beta}(m(\si))}$ is bounded by $f_{\beta}(m_{\delta_N})=f_{\beta}(m_-)+\delta_N$. 
	This would yield to the trivial upper bound $1$ on $\mathbb{P}_{\sigma} \big(\tau_{\Ss_N[m_-(N)]}<\tau_{\Ss_N[m_+(N)]}\big)  $, 
	which is not sufficient for our purpose of proving that the second term in \eqref{eq:sumUp} is negligible with respect to the last one. 
	The way to go is, therefore, to keep the dependence on $m(\si)$ in order to obtain later a more suitable bound, uniform in $m$,
	by exploiting the smallness of $\Qq_{\beta,N} (m(\si))$ in  \eqref{eq:Um+} and \eqref{eq:better_b}.
\end{remark}

In order for \eqref{eq:use611} to be true, we have to prove the following result. 
\begin{lemma}\label{lem:hitting6.11}
	For all $\si \in \Ss_N[m_{\varepsilon_N}]$, for $\varepsilon$ sufficiently small and for $N$ sufficiently large,
	\begin{equation}
		\PP_{\si}\Big(\tau_{\Ss_N[m_+(N)]}<\tau_{ \Ss_N[m_{\varepsilon_N}]} \Big) \geq \eee^{- N \ell_N(\theta_N)}(1+o(1)),
	\end{equation}
	where $\ell_N:\RR \to \RR$ is defined by
	\begin{multline}\label{eq:elltheta}
		\ell_N(x)= \tfrac{1}{2} \Big[ x  \,\big(\log 2 +\beta\,\abs{2-2h} +1 \big) - (1-m_+(N)+x) \log(1-m_+(N)+ x) \\ +(1-m_+(N)) \log(1-m_+(N)) \Big].
	\end{multline}
	
\end{lemma}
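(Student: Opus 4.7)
The strategy is to lower-bound the probability by restricting to a specific family of paths that ascend from $\Ss_N[m_{\varepsilon_N}]$ to $\Ss_N[m_+(N)]$ in exactly $k := N\theta_N/2$ steps, each of which flips one spin from $-1$ to $+1$. Such a path, starting at $\si = \si^{(0)} \in \Ss_N[m_{\varepsilon_N}]$, is parametrized by an ordered tuple of distinct sites $(\ell_0, \ldots, \ell_{k-1})$ with $\si^{(j)}_{\ell_j} = -1$, and its magnetization sequence is $m_j = m_{\varepsilon_N} + 2j/N$, terminating at $m_k = m_+(N)$. Since the magnetization strictly increases along the path, the chain avoids $\Ss_N[m_{\varepsilon_N}]$ at times $1, \ldots, k$, and $\si^{(k)} \in \Ss_N[m_+(N)]$.

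I would then sum the individual path probabilities
\begin{equation*}
\prod_{j=0}^{k-1}\tfrac{1}{N}\exp\bigl(-\beta[H_N(\si^{(j+1)})-H_N(\si^{(j)})]_+\bigr)
\end{equation*}
over all admissible tuples. The number of admissible tuples equals $\prod_{j=0}^{k-1}(N_-(\si)-j)$, where $N_-(\si) = N(1-m_{\varepsilon_N})/2$, so that one obtains a lower bound of the form
\begin{equation*}
\PP_\si\bigl(\tau_{\Ss_N[m_+(N)]} < \tau_{\Ss_N[m_{\varepsilon_N}]}\bigr) \geq \frac{N_-(\si)!}{N^k (N_-(\si)-k)!}\exp\bigl(-\beta\textstyle\sum_{j=0}^{k-1}[H_N(\si^{(j+1)})-H_N(\si^{(j)})]_+\bigr).
\end{equation*}
To handle the exponential factor uniformly in the choice of path, I would establish that for any $-1 \to +1$ single-spin flip with $m(\si) \in [m_{\varepsilon_N}, m_+(N)]$ one has $[H_N(\si')-H_N(\si)]_+ \leq |2-2h| + o(1)$. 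This should be extracted from the explicit formula \eqref{eq:diff_H} via the observation that the mean-field increment $\tH(\si')-\tH(\si) = -(2m+2/N+2h)$ is non-positive in our regime $m \geq m_{\varepsilon_N} > 0 > -h$, together with a careful bound on the random increment $\Delta(\si')-\Delta(\si) = -\tfrac{2}{Np}\sum_{i\neq\ell}\hat{J}_{i\ell}\si_i$ that exploits the smallness of the minority population $N(1-m)/2$ in our regime (so that extreme-case contributions from $-$ spins are limited).

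Given such a bound, the exponent becomes at most $k\beta|2-2h| = \tfrac{N\theta_N}{2}\beta|2-2h|$. Writing $N_-(\si) = N(1-m_+(N)+\theta_N)/2$ and $N_-(\si)-k = N(1-m_+(N))/2$ and applying Stirling's formula, a direct computation gives
\begin{equation*}
\log\frac{N_-(\si)!}{N^k(N_-(\si)-k)!} = \tfrac{N}{2}\bigl[(1-m_+(N)+\theta_N)\log(1-m_+(N)+\theta_N) - (1-m_+(N))\log(1-m_+(N))\bigr] - \tfrac{N\theta_N}{2}(\log 2 + 1) + o(N).
\end{equation*}
Combining the two contributions recovers exactly $-N\ell_N(\theta_N)+o(N)$, giving the claim.

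The main obstacle is the uniform energy-rise estimate $[H_N(\si')-H_N(\si)]_+ \leq |2-2h| + o(1)$. The crude bound $|\Delta(\si')-\Delta(\si)| \leq 2/p$ is too loose and would produce a constant depending on $1/p$ rather than the sharp $|2-2h|$. The correct constant must be extracted by combining the non-positive mean-field contribution with the structure of $\sum_{i\neq\ell}\hat{J}_{i\ell}\si_i$ when $m$ is close to $m_+$ (so that only $N(1-m_+(N)+\theta_N)/2 = o(N)$ of the $\si_i$ carry the ``unfavourable'' sign); I expect this is where the bulk of the technical work in the proof will lie, and where some care is needed to control the worst-case realisations of the couplings uniformly along admissible paths.
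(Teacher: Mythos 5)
Your overall strategy is exactly the paper's: restrict to monotone-increasing paths of length $k=\tfrac{N}{2}\theta_N$, lower-bound each step's transition probability, count the number of such paths as $\frac{N_-(\si)!}{(N_-(\si)-k)!}$, and apply Stirling to recover $\eee^{-N\ell_N(\theta_N)}$. You have also put your finger on exactly where the content is: the per-step estimate on $[H_N(\si')-H_N(\si)]_+$.

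However, your proposed route to that estimate has a gap, and it is a real one. You argue that the ``unfavourable'' sign contributions are limited because the minority ($-$) population is $o(N)$, but it is not: $\tfrac{N}{2}(1-m_+(N)+\theta_N)\to\tfrac{N}{2}(1-m_+)$, which is of order $N$ since $m_+<1$. Moreover, the danger does not come only from the $-$ spins. Writing the random increment for a $-1\to+1$ flip as $\Delta(\si')-\Delta(\si)=-\tfrac{2}{Np}\sum_{i\neq\ell}\hat J_{i\ell}\si_i$ with $\hat J_{i\ell}\in\{-p,1-p\}$, the worst case over realisations of $J$ makes every $-$ spin contribute $-(1-p)$ and every $+$ spin contribute $-p$, giving $\Delta(\si')-\Delta(\si)\le \tfrac{(1-p)(1-m)}{p}+(1+m)$. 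After adding the non-positive mean-field part $-2m-\tfrac{2}{N}-2h$, the deterministic worst-case bound is $H_N(\si')-H_N(\si)\lesssim \tfrac{1-m_+}{p}-2h$, which exceeds $2-2h$ whenever $p<\tfrac{1-m_+}{2}$. So neither the smallness of the minority nor the sign of the mean-field increment can, by themselves, deliver the constant $|2-2h|$ that you (and the paper) write in $\ell_N$.

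In fact, tracing the paper's own one-line estimate through \eqref{eq:diff_H}, the $\tfrac{1}{Np}$ appears to have been silently replaced by $\tfrac{1}{N}$: with the correct prefactor, the crude bound $|\sum_{j\neq r}J_{jr}\si_j|\le N-1$ yields $[H_N(\si')-H_N(\si)]_+\le \tfrac{2(N-1)}{Np}+2h$, so the natural per-step constant is $|2/p-2h|$ rather than $|2-2h|$. This only changes the coefficient of $x$ in $\ell_N$ by a bounded amount depending on $p$; since $\theta_N$ can be made as small as you like by choosing $\varepsilon$ small, the downstream use of the lemma (showing the contribution from $U_{\delta,N}(m_+)\setminus\{m_+(N)\}$ is negligible) still goes through. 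So the safe way to repair your argument is not to try to squeeze out $|2-2h|$, but to (i) observe that the mean-field increment is $\le 0$ on $[m_{\varepsilon_N},m_+(N))$ with $m_{\varepsilon_N}>0,h>0$ so the $[\cdot]_+$ of the CW part vanishes, and (ii) accept the crude deterministic bound on the random part, $\tfrac{2}{Np}\bigl|\sum_{i\neq\ell}\hat J_{i\ell}\si_i\bigr|\le \tfrac{2(N-1)}{Np}$, replacing $|2-2h|$ by the resulting (larger) constant in $\ell_N$. The rest of your computation (path count, Stirling) then carries through unchanged and matches the paper.
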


\begin{proof}	
	
	Recall that $\{\si(t)\}_{t\geq0}$ is the Markov chain 
	with transition probabilities \eqref{eq:rates} and, for $\si \in \Ss_N$ with $m(\si)<m_+(N)$, let
	\begin{multline}
		A_N(\si) = \Big\{ (\si(0), \si(1), \si(2),\dots) : \,  \si(0)=\si,  \forall i \in \NN,  \si(i) \in \Ss_N, \si(i)\sim \si(i+1), \\ 
		\exists \, k \in \NN \text{ s.t. } \si(k) \in \Ss_N[m_+(N)], 
		\text{ and } \forall i \leq k-1, \, \, m(\si(i+1))=m(\si(i))+\tfrac{2}{N}  	\Big\} 
	\end{multline}
	be the set of infinite paths  starting in  $\si$ and having increasing magnetisation until the set  $\Ss_N[m_+(N)]$ is reached. 
	
	Notice that, for fixed $\si$ and $N$, the number $k$ of steps of increasing magnetisation to reach  $\Ss_N[m_+(N)]$ is fixed, 
	namely $k=\frac{N}{2} (m_+(N)-m(\si))$.
	
	We want to partition $A_N(\si)$ according to the values of the first $k+1$ elements of its paths.
	Given a sequence $\pi \in \Ss_N^{k+1}$, let us denote by $\{\pi \}$ the set of all paths in $A_N(\si)$ in which the first $k+1$ elements are exactly given by $\pi$, namely
	\begin{equation}
		\{\pi \}= \big\{ \left(\si(0),\si(1), \dots,  \si(k), \si(k+1), \dots \right) \in A_N(\si): 
		(\si(0), \dots,  \si(k)) =\pi\big\}.
	\end{equation}
	Notice that, by definition of $A_N(\si)$, $\{\pi \}$ is empty for many $\pi \in \Ss_N^{k+1}$ . We denote by $B_N(\si)$ the set of all the sequences  $\pi \in \Ss_N^{k+1}$ such that $\{\pi\}$ is not empty. Thus, we obtain the following partition of $A_N(\si)$
	\begin{equation}
		A_N(\si) = \bigsqcup_{\pi \in B_N(\si)} \{\pi \}.
	\end{equation}
	
	Fix $\si \in \Ss_N[m_{\varepsilon_N}]$, then one simply notices that   
	
	\begin{equation}\label{eq:targetP}
		\begin{split}
			\PP_{\si}&\left(\tau_{\Ss_N[m_+(N)]}<\tau_{ \Ss_N[m_{\varepsilon_N}]} \right) 
			\geq \PP_{\si}(A_N(\si)) =
			\sum_{\pi \in B_N(\si)}\PP_{\si} \left( \{\pi \} \right).\\
	\end{split}\end{equation}
	Thus, we first find a lower bound on $\PP_{\si} \left( \{\pi \} \right)$ independent of $\pi$ in $B_N(\si)$ and later we compute the cardinality of $B_N(\si)$.
	Fix $\pi=(\si(0), \si(1), \si(2),\dots, \si(k)) \in B_N(\si)$, then we have
	\begin{equation}
		\begin{split}
			&\PP_{\si} \left( \{\pi\} \right)  = \prod_{i=1}^{k} p_N(\si(i-1), \si(i)) = 
			\frac{1}{N^k} \prod_{i=1}^{k} \exp\Big(-\beta \left[H(\si(i))-H(\si(i-1))\right]_+\Big)\\
			&\geq \frac{C^k}{N^k} \prod_{i=1}^{k}\exp\Big(-\beta N [E(m_i)-E(m_{i-1})]_+\Big)
			=\frac{C^k}{N^k} \prod_{i=1}^{k} \exp\left(-\beta \left[-2m_{i-1} -\frac{2}{N}-2h\right]_+\right),
		\end{split}
	\end{equation}
	where $m_i=m(\si(i))$, $C=\exp\left(-\beta \abs{2-2h}\right)$ and we used the following fact
	\begin{equation}
		\begin{split}
			&\frac{\exp\Big(-\beta[H(\si(i))-H(\si(i-1))]_+ \Big)}{\exp \Big(-\beta N[E(m_i)-E(m_{i-1})]_+ \Big)} 
			=\frac{\exp\Big(-\beta[H(\si(i))-H(\si(i-1))]_+ \Big)}{\exp \Big(-\beta \big[-2m_{i-1} -\frac{2}{N}-2h\big]_+ \Big)} \\
			&\geq \exp\Big(-\beta[H(\si(i))-H(\si(i-1))]_+ \Big) = \exp\left(-\beta \left[- \frac{2}{N} \sum_{j: j \neq r} J_{j r} \sigma(i-1)_j -2h \right]_+\right) \\
			& \geq \exp\left(-\beta \left[2-2h-\tfrac{2}{N}\right]_+\right)\geq \exp\Big(-\beta \abs{2-2h}\Big),
		\end{split}
	\end{equation}
	where $r$ is the index of the spin to be flipped to go from $\si(i-1)$ to $\si(i)$.
	Therefore, recalling that $m_i \in [m_{\varepsilon_N},m_+(N)]$, we obtain the following lower bound independent of $\pi$
	\begin{equation} \label{eq:P_pi}
		\begin{split}
			\PP_{\si} \left( \{\pi\} \right) \geq \frac{C^k}{N^k} \prod_{i=1}^{k} \exp\left(-\beta \left[-2m_{\varepsilon_N} -\frac{2}{N}-2h\right]_+\right) 
			=\frac{C^k}{N^k}.
		\end{split}
	\end{equation}
	Indeed, for $\varepsilon_N$ sufficiently small, $m_{\varepsilon_N}$ is close to $m_+(N)>0$, allowing us to assume ${m_{\varepsilon_N}>0}$. Therefore, $-2m_{\varepsilon_N} -\frac{2}{N}-2h<0$, which implies the last equality in \eqref{eq:P_pi}.
	
	We are left to compute the cardinality of $B_N(\si)$, with $\si \in \Ss_N[m_{\varepsilon_N}]$,   namely we have to count all  paths from $\si$ to $\Ss_N[m_+(N)]$ with increasing magnetisation and length $k+1$. Any of these paths is characterised by a final spin $\bar{\si} \in \Ss_N[m_+(N)]$ and a sequence of negative spins which are flipped. 
	Notice that $\bar{\si}$ is reachable by $\si$ through a path with increasing magnetisation if and only if the two following properties are satisfied: 
	$\bar{\si}$ has $k$ positive spins more than $\si$ and,
	for all $i \in [N]$,  $\si_i=+1$ implies $\bar{\si}_i=+1$.
	Thus, a configuration $\bar{\si} \in \Ss_N[m_+(N)]$ reachable by $\si$ through a path with increasing magnetisation is characterised by the $k$ spins which are negative in $\si$ and positive in $\bar{\si}$.
	Therefore, the number of reachable configurations $\bar{\si}$ is
	\begin{equation}
		{\binom{\frac{1}{2}N \left(1-m_{\varepsilon_N}\right)}{k}}
		={\binom{\frac{1}{2}N \left[1-m_+(N) +{\theta_N}\right]\,}{\frac{1}{2}N \theta_N}},
	\end{equation}
	being $\frac{1}{2}N (1-m_{\varepsilon_N})$ the number of negative spins of $\si \in \Ss_N[m_{\varepsilon_N}]$ and ${k=\frac{1}{2}N \theta_N}$, where $\theta_N$ has been defined in Section~\ref{sec:notat_delta_theta}.
	
	The number of paths with increasing magnetisation from $\si \in \Ss_N[m_{\varepsilon_N}]$ to a reachable ${\bar{\si} \in \Ss_N[m_+(N)]}$, both fixed, is $k!$, namely the number of permutations of the $k$ negative spins which are flipped along a path. 
	Thus, being $k=\frac{1}{2}N \theta_N$, the cardinality of $B_N(\si)$ is
	\begin{equation}
		\left(\tfrac{1}{2}N \theta_N\right)! \,
		{\binom{\frac{1}{2}N \big[1-m_+(N) +{\theta_N}\big]}{\frac{1}{2}N \theta_N}}.
	\end{equation}

	Going back to \eqref{eq:targetP}, we obtain
	\begin{equation}
		\begin{split}
			&\PP_{\si}\Big(\tau_{\Ss_N[m_+(N)]}<\tau_{ \Ss_N[m_{\varepsilon_N}]} \Big) 
			\geq  \sum_{\pi \in B_N(\si)} \PP_{\si} \big( \{\pi\} \big)  \\
			& \geq \left(\frac{C}{N}\right)^{\frac{1}{2}N \theta_N} \left(\tfrac{1}{2}N \theta_N \right)! \,
			{\binom{\frac{1}{2}N \big[1-m_+(N) +{\theta_N}\big]}{\frac{1}{2}N \theta_N}} \\
			& =  \eee^{- \frac{1}{2}N \theta_N \log \frac{N}{C}}
			\frac{N (1-m_+(N) +{\theta_N})}{2}!
			\left[\frac{N (1-m_+(N))}{2}!\right]^{-1}.
		\end{split}
	\end{equation}
	Using Stirling's approximation ${n!= \sqrt{2\pi n}\, \eee^{n(\log n -1)} (1+o(1))}$ and the notation  \begin{equation}
		{k_{\theta_N}=\frac{1-m_+(N) +\theta_N}{1-m_+(N)}},
	\end{equation}
	we obtain 
	\begin{multline}
		\frac{N (1-m_+(N) +\theta_N)}{2}!\left[\frac{N (1-m_+(N))}{2}!\right]^{-1}\\
		= \sqrt{k_{\theta_N}}\exp\left[ \tfrac{N (1-m_+(N))}{2}  \log (k_{\theta_N})  +  \tfrac{1}{2}N \theta_N
		\log \left( \tfrac{N (1-m_+(N) + \theta_N)}{2}\right)-\tfrac{1}{2}N \theta_N\right] (1+o(1)).
	\end{multline}
	Thus, since $k_{\theta_N}\geq 1$ and $C=\exp(-\beta \abs{2-2h})$, we conclude by
	\begin{eqnarray}
		\nonumber
		&&\PP_{\si}\left(\tau_{\Ss_N[m_+(N)]}<\tau_{ \Ss_N[m_{\varepsilon_N}]} \right) \\\nonumber
		&&\geq \sqrt{k_{\theta_N}}
		\mathrm{e}^{-\frac{N}{2} \left(   \theta_N \log \left(\frac{N}{C}\right) +\theta_N- (1-m_+(N)) \log (k_{\theta_N}) - \theta_N\log \left( \frac{N}{2}(1-m_+(N) + \theta_N)\right) \right)} (1+o(1))\\\nonumber
		&& \geq  
		\mathrm{e}^{-\frac{N}{2} \left(   \theta_N \log\left(\frac{N}{C}\right)  +\theta_N - (1-m_+(N)) \log (k_{\theta_N}) - \theta_N\log \left( \frac{N}{2}\right)  -\theta_N \log (1-m_+(N) + \theta_N) \right)} (1+o(1))\\
		\nonumber
		&& =  
		\mathrm{e}^{-\frac{N}{2} \left(   \theta_N \log (2)+  \theta_N \,\beta \abs{2-2h} +\theta_N 
			- (1-m_+(N) +\theta_N) \log (1-m_+(N) +\theta_N)\right)} \\\nonumber
		&&\qquad\times \,\mathrm{e}^{-\frac{N}{2} \left( 
			(1-m_+(N))\log(1-m_+(N))  \right)} (1+o(1)) \\
		&&= \eee^{-N\ell_N(\theta_N)}(1+o(1)).
	\end{eqnarray}
\end{proof}

To prove Lemma~\ref{lem:hittingP} we used the following fact. 
\begin{lemma}\label{lem:for_mTheta}
	For $\si \in \Ss_N[m_{\varepsilon_N}]$, for $N$ sufficiently large and any $\gamma \in (0,1)$, 
	\begin{multline}
		\PP_{\si}\left(\tau_{\Ss_N[m_-(N)]}<\tau_{ \Ss_N[\{m_+(N), m_{\varepsilon_N}\}]} \right) \\ 
		\leq \exp\Big(-\beta N (1-\gamma)
		\left[f_{\beta}(m_-) +  \delta_N-f_{\beta}\left(m_{\varepsilon_N}-\tfrac{2}{N}\right) -\varepsilon_N \right]\Big).
	\end{multline}
\end{lemma}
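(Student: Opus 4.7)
The plan is to prove Lemma~\ref{lem:for_mTheta} by a short first-step analysis that reduces the claim to Lemma~\ref{lem:hitting6.3}.

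First, I would apply the strong Markov property at time~$1$ together with the following combinatorial observation. Since the magnetisation $m(\cdot)$ changes by exactly $\pm\tfrac{2}{N}$ at each transition, any trajectory starting from $\sigma\in\Ss_N[m_{\varepsilon_N}]$ and realising the event $\{\tau_{\Ss_N[m_-(N)]}<\tau_{\Ss_N[\{m_+(N),m_{\varepsilon_N}\}]}\}$ is forced to have $m(\sigma(1))=m_{\varepsilon_N}-\tfrac{2}{N}$. Indeed: if $\sigma(1)=\sigma$ (Metropolis rejection) or $m(\sigma(1))=m_{\varepsilon_N}$, then $\sigma(1)\in\Ss_N[m_{\varepsilon_N}]$ and the event fails at time~$1$. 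If instead $m(\sigma(1))=m_{\varepsilon_N}+\tfrac{2}{N}$, then either $\sigma(1)\in\Ss_N[m_+(N)]$ (event fails), or the trajectory lies strictly above $m_{\varepsilon_N}$ immediately after time~$1$, and any subsequent descent to reach $\Ss_N[m_-(N)]$ must cross the level $m_{\varepsilon_N}$, hence enter $\Ss_N[m_{\varepsilon_N}]$ first, contradicting the event.

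Second, using the Markov property and $\sum_{\sigma'}p_N(\sigma,\sigma')\le 1$, I obtain
\begin{equation*}
\PP_{\sigma}\left(\tau_{\Ss_N[m_-(N)]}<\tau_{\Ss_N[\{m_+(N),m_{\varepsilon_N}\}]}\right)\le \max_{\sigma'\in\Ss_N[m_{\varepsilon_N}-2/N]}\PP_{\sigma'}\left(\tau_{\Ss_N[m_-(N)]}<\tau_{\Ss_N[\{m_+(N),m_{\varepsilon_N}\}]}\right).
\end{equation*}
For $N$ large and $\varepsilon>0$ small (as in Section~\ref{sec:notat_delta_theta}), we have $m_{\varepsilon_N}-\tfrac{2}{N}\in [m_{\delta_N},m_{\varepsilon_N})$, so every $\sigma'$ entering the maximum satisfies the hypothesis of Lemma~\ref{lem:hitting6.3}. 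That lemma supplies the bound
\begin{equation*}
\PP_{\sigma'}\left(\tau_{\Ss_N[m_-(N)]}<\tau_{\Ss_N[\{m_+(N),m_{\varepsilon_N}\}]}\right)\le \exp\Big(-\beta N(1-\gamma)\big[f_\beta(m_-)+\delta_N-f_\beta\big(m_{\varepsilon_N}-\tfrac{2}{N}\big)\big]\Big)(1+o(1)),
\end{equation*}
which is in fact slightly stronger than the target inequality: the claim is weaker by the additional $-\varepsilon_N$ in the exponent, a slack presumably inserted for convenience in the downstream use within the proof of Lemma~\ref{lem:hittingP}.

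The real technical content of the argument is therefore pushed into Lemma~\ref{lem:hitting6.3}, where one needs the uniform Boltzmann-type crossing bound for every starting magnetisation in $[m_{\delta_N},m_{\varepsilon_N})$, and the $(1-\gamma)$ slack in the exponent must absorb both the random-Hamiltonian corrections coming from Corollary~\ref{cor:theBound} and the polynomial prefactors arising from the capacity and entropy computations. By contrast, the present lemma is a pure first-step decoration on top of that result and poses no independent obstacle; the only point requiring care is the combinatorial justification in step one that the first transition is forced to decrease the magnetisation.
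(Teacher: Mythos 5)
Your proof is correct and follows the same overall architecture as the paper's: a first-step decomposition forcing the initial transition to be a downward flip, followed by an appeal to Lemma~\ref{lem:hitting6.3}. The combinatorial justification in your step one (the three cases for $m(\sigma(1))$ and the level-crossing argument) is sound and matches the paper's observation that the upward branch contributes zero.

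The genuine difference is at the reduction step. The paper bounds the conditional probability given a downward first step by the crude \emph{sum} $\sum_{\sigma'\sim\sigma,\;m(\sigma')=m_{\varepsilon_N}-2/N}\PP_{\sigma'}(\cdots)$, which introduces a polynomial prefactor $N\tfrac{1+m_{\varepsilon_N}}{2}$. That prefactor is then absorbed into the exponent using the inequality $\tfrac{\log N + O(1)}{\beta N(1-\gamma)}\le\varepsilon_N$ for $N$ large, which is precisely what the $-\varepsilon_N$ slack in the statement is there for. You instead use the fact that, given the first step lands in $\Ss_N[m_{\varepsilon_N}-2/N]$, the conditional law is a convex combination over the neighbours, so the conditional probability is bounded by the \emph{max}; this removes the polynomial prefactor entirely and delivers the slightly stronger statement without the $-\varepsilon_N$ term. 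Your speculation that the $-\varepsilon_N$ was inserted for downstream convenience is therefore not quite right — in the paper's argument it is needed to absorb the $N\tfrac{1+m_{\varepsilon_N}}{2}$ coming from the sum bound — but your approach genuinely makes it unnecessary at this step. One small inaccuracy: Lemma~\ref{lem:hitting6.3} as stated carries no $(1+o(1))$ factor, so the one you wrote in your second display should not be there; it is harmless since your bound is in any case far inside the target.
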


\begin{proof}
	Let us denote by $W_N(m)$ the event of making the first flip in $\Ss_N[m]$.
	
	For $\si \in \Ss_N[m_{\varepsilon_N}]$, conditioning on the first step, we obtain
	\begin{equation}
		\begin{split}
			\PP_{\si}&\left(\tau_{\Ss_N[m_-(N)]}<\tau_{ \Ss_N[\{m_+(N), m_{\varepsilon_N}\}]} \right) \\
			& =\PP_{\si}\left(W_N\left(m_{\varepsilon_N}+\tfrac{2}{N}\right)\right) \PP_{\si}\left(\tau_{\Ss_N[m_-(N)]}<\tau_{ \Ss_N[\{m_+(N), m_{\varepsilon_N}\}]} 
			\,\Big|\, W_N\left(m_{\varepsilon_N}+\tfrac{2}{N}\right) \right)  \\
			& \qquad  +  \PP_{\si}\left(W_N\left(m_{\varepsilon_N}-\tfrac{2}{N}\right)\right)
			\PP_{\si}\left(\tau_{\Ss_N[m_-(N)]}<\tau_{ \Ss_N[\{m_+(N), m_{\varepsilon_N}\}]} \,\Big| \,W_N\left(m_{\varepsilon_N}-\tfrac{2}{N}\right)\right)\\
			& = \PP_{\si}\left(W_N\left(m_{\varepsilon_N}+\tfrac{2}{N}\right)\right) \sum_{\si' \in \Ss_N\left[m_{\varepsilon_N}+\tfrac{2}{N}\right],\, \si \sim \si'} \PP_{\si'}\left(\tau_{\Ss_N[m_-(N)]}<\tau_{ \Ss_N[\{m_+(N), m_{\varepsilon_N}\}]} \right)  \\
			& \qquad + \PP_{\si}\left(W_N\left(m_{\varepsilon_N}-\tfrac{2}{N}\right)\right) \sum_{\si' \in \Ss_N\left[m_{\varepsilon_N}-\tfrac{2}{N}\right],\, \si \sim \si' } \PP_{\si'}\left(\tau_{\Ss_N[m_-(N)]}<\tau_{ \Ss_N[\{m_+(N), m_{\varepsilon_N}\}]} \right).\\
		\end{split}
	\end{equation}
	The first term vanishes because all the probabilities in the sum are zero. Thus, we get the upper bound
	\begin{multline}
		\PP_{\si}\Big(\tau_{\Ss_N[m_-(N)]}<\tau_{ \Ss_N[\{m_+(N), m_{\varepsilon_N}\}]} \Big) 
		\\ 
		\leq \sum_{\si' \in \Ss_N\left[m_{\varepsilon_N}-\tfrac{2}{N}\right], \si \sim \si'} \PP_{\si'}\Big(\tau_{\Ss_N[m_-(N)]}<\tau_{ \Ss_N[\{m_+(N), m_{\varepsilon_N}\}]} \Big).
	\end{multline}
	
	Using first Lemma~\ref{lem:hitting6.3} which gives bounds uniform in $\si'$, 
	we obtain
	\begin{equation}
		\begin{split}
			&	\PP_{\si}\left(\tau_{\Ss_N[m_-(N)]}<\tau_{ \Ss_N[\{m_+(N), m_{\varepsilon_N}\}]} \right)\\
			& \leq  \sum_{\si' \in \Ss_N\left[m_{\varepsilon_N}-\tfrac{2}{N}\right], \si \sim \si'} \exp\Big(-\beta N (1-\gamma)\left[f_{\beta}(m_-) + 
			\delta_N-f_{\beta}(m(\si'))\right]\Big) \\
			& =N \frac{1+ m_{\varepsilon_N}}{2}  \exp\Big(-\beta N (1-\gamma)\left[f_{\beta}(m_-) +  \delta_N-f_{\beta}\left(m_{\varepsilon_N}-\tfrac{2}{N}\right)\right]\Big) \\
			&= \exp\left(-\beta N (1-\gamma)\left[f_{\beta}(m_-) +  \delta_N-f_{\beta}\left(m_{\varepsilon_N}-\tfrac{2}{N}\right)  - \tfrac{\log N +O(1)}{\beta N (1-\gamma)}\right]\right)\\
			&\leq \exp\Big(-\beta N (1-\gamma)\left[f_{\beta}(m_-) +  \delta_N-f_{\beta}\left(m_{\varepsilon_N}-\tfrac{2}{N}\right)  - \varepsilon_N \right]\Big),\\
		\end{split}
	\end{equation}
	where in the last inequality we used that, for $N$ large enough, $\tfrac{\log N +O(1)}{\beta N (1-\gamma)}\leq \varepsilon_N$ (which converges to $\varepsilon>0$, see Step 4 in Section \ref{sec:notat_delta_theta}).
\end{proof}

In the proofs of Lemmas~\ref{lem:hittingP} and \ref{lem:for_mTheta}, we use
the following fact.
\begin{lemma} \label{lem:hitting6.3}
	For $\si \in \Ss_N\big[[m_{\delta_N},m_{\varepsilon_N})\big]$, for $N$ sufficiently large and any $\gamma \in (0,1)$,   
	\begin{equation}
		\PP_{\si}\left(\tau_{\Ss_N[m_-(N)]}<\tau_{ \Ss_N[\{m_+(N), m_{\varepsilon_N}\}]} \right) \leq \exp\Big(-\beta N (1-\gamma)\left[f_{\beta}(m_-) + \delta_N-f_{\beta}(m(\si))\right]\Big).
	\end{equation}	
\end{lemma}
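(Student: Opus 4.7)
The plan is to use the super-harmonic-function / maximum principle method, following the spirit of Bianchi--Bovier--Ioffe~\cite{BBI09}, together with a simple reduction that exploits the single-spin-flip structure of the chain.

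First I would reduce the event. Since each Metropolis step changes $m$ by $\pm\tfrac{2}{N}$ and $m(\sigma) < m_{\varepsilon_N} < m_+(N)$, any trajectory from $\sigma$ to $\Ss_N[m_+(N)]$ must first visit $\Ss_N[m_{\varepsilon_N}]$, so under $\PP_\sigma$ one has $\tau_{\Ss_N[\{m_+(N), m_{\varepsilon_N}\}]} = \tau_{\Ss_N[m_{\varepsilon_N}]}$. Likewise, to reach $\Ss_N[m_-(N)]$ from $m(\sigma) \geq m_{\delta_N} > m_-(N)$ the process must hit the adjacent level $m_{\delta_N} - \tfrac{2}{N}$. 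Combining these two observations,
\begin{equation*}
\PP_\sigma(\tau_{\Ss_N[m_-(N)]} < \tau_{\Ss_N[\{m_+(N), m_{\varepsilon_N}\}]}) \leq \PP_\sigma(\tau_{\Ss_N[m_{\delta_N} - 2/N]} < \tau_{\Ss_N[m_{\varepsilon_N}]}),
\end{equation*}
reducing the problem to an exit-from-a-well hitting estimate inside the right well.

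I would then introduce the candidate super-harmonic function $\Psi(\sigma) := \psi(m(\sigma))$ with
\begin{equation*}
\psi(m) := \exp\!\big(\beta N (1-\gamma)\bigl[f_\beta(m) - f_\beta(m_{\delta_N} - \tfrac{2}{N})\bigr]\big).
\end{equation*}
On the boundary of $\Ss_N[(m_{\delta_N} - \tfrac{2}{N}, m_{\varepsilon_N})]$ one has $\psi(m_{\delta_N} - \tfrac{2}{N}) = 1$ and $\psi(m_{\varepsilon_N}) > 0$, which are the correct values for the maximum principle. Granted that $\Psi$ is super-harmonic on this domain --- the content of the forthcoming Lemma~\ref{lem:superHarm} --- the maximum principle yields $\PP_\sigma(\tau_{\Ss_N[m_{\delta_N} - 2/N]} < \tau_{\Ss_N[m_{\varepsilon_N}]}) \leq \psi(m(\sigma))$. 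A Taylor expansion gives $f_\beta(m_{\delta_N} - \tfrac{2}{N}) = f_\beta(m_-) + \delta_N + O(1/N)$, and the resulting $O(1)$ multiplicative correction in $\psi(m(\sigma))$ is absorbed by slightly enlarging the slack (replacing $\gamma$ by some $\gamma' \in (\gamma, 1)$ arbitrarily close), which is legitimate for $N$ large, producing the claimed exponential bound.

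The main obstacle is the super-harmonicity verification. Unlike in the CW case, the RDCW dynamics does not project to a Markov chain on $\GN$, since the Metropolis rate $p_N(\sigma,\sigma')$ for a neighbour $\sigma' \sim \sigma$ depends on the random local field of the flipped vertex. Super-harmonicity therefore needs to be checked uniformly in $\sigma$ and in the realisation of $(J_{ij})$, since Lemma~\ref{lem:hitting6.3} is a quenched statement. Using $\psi(m \pm \tfrac{2}{N})/\psi(m) = \exp(\pm 2\beta(1-\gamma) f_\beta'(m) + O(1/N))$ together with $f_\beta'(m) < 0$ on $(m^*, m_+)$, super-harmonicity reduces (after factoring out $\psi(m)$) to a pointwise ratio inequality between the cumulative downward and upward single-spin-flip rates at $\sigma$. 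The slack produced by $\gamma > 0$ is exactly what is needed to absorb the deterministic $O(1)$ per-spin fluctuations of these random rates away from their CW counterparts, controlled by the uniform estimate $|H_N(\sigma') - H_N(\sigma)| \leq 2/p + 2|h|$ available for every single-spin-flip pair.
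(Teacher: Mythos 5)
Your proposal follows essentially the same route as the paper's: reduce the event to a hitting estimate between two magnetisation levels inside the right well, apply Doob's optional stopping theorem for super-martingales to a test function of the form $\exp\big(\beta N(1-\gamma) f_\beta(\cdot)\big)$, and defer the super-harmonicity verification to a separate lemma (the paper's Lemma~\ref{lem:superHarm}). The only inessential differences are that the paper stops the chain at $\Ss_N[m_{\delta_N}]$ rather than $\Ss_N[m_{\delta_N}-\tfrac{2}{N}]$, which allows it to invoke the exact identity $f_\beta(m_{\delta_N}) = f_\beta(m_-) + \delta_N$ and avoid the Taylor expansion and the $\gamma$-shifting step you add at the end — and in fact, since $f_\beta$ is strictly decreasing on $(m^*,m_+)$, one has $f_\beta\big(m_{\delta_N}-\tfrac{2}{N}\big) > f_\beta(m_-)+\delta_N$, so your $O(1)$ multiplicative correction already points in the favourable direction and needs no $\gamma$-adjustment at all.
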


\begin{proof}
	For $\si \in \Ss_N\big[[m_{\delta_N},m_{\varepsilon_N})\big]$, 
	\begin{equation}\label{eq:uppbound}
		\PP_{\si}\Big(\tau_{\Ss_N[m_-(N)]}<\tau_{ \Ss_N[\{m_+(N), m_{\varepsilon_N}\}]} \Big) \leq \PP_{\si}\Big(\tau_{\Ss_N[m_{\delta_N}]}<\tau_{ \Ss_N[\{m_+(N), m_{\varepsilon_N}\}]} \Big),
	\end{equation}	
	being $m_-(N)<m^*<m_{\delta_N}<m_{\varepsilon_N}<m_+(N)$, for $N$ sufficiently large. 	
	Therefore, we focus on finding an upper bound on the right hand side of \eqref{eq:uppbound}.
	
	Assume that there exists a function $\psi$ super-harmonic in $\Ss_N\big[[m_{\delta_N},m_{\varepsilon_N})\big]$. 	
	As a consequence, $0> L \psi (\si) = \frac{\partial}{\partial t}\EE_{\si} \left[ \psi (\sigma(t))\right]$. This implies $\EE_{\si}\left[\psi(\si(t))\right]\leq \EE_{\si}\left[\psi(\si(s))\right]$, for all $s<t$. Take $s=0$, and $\si(0)=\si$, therefore $\EE_{\si}\left[\psi(\si(t))\right]\leq \psi(\si)$, for all $t>0$. Thus, $\psi(\si(t))$ is a super-martingale. 
	For the integrable stopping time $T=\tau_{\Ss_N[m_{\delta_N}]} \wedge \tau_{ \Ss_N[\{m_+(N), m_{\varepsilon_N}\}]} $, we  use Doob's optional stopping theorem for super-martingales 
	to show that, for all $\si$ in the domain $\Ss_N\big[[m_{\delta_N},m_{\varepsilon_N})\big]$ of $\psi$, $\EE_{\si}\left[\psi(\si(T))\right]\leq \psi(\si)$.
	Therefore,
	\begin{equation}
		\psi(\si) \geq \EE_{\si}\big[\psi(\si(T))\big] 
		\geq \min_{\si' \in \Ss_N[m_{\delta_N}]} \psi(\si') \PP_{\si}\Big(\tau_{\Ss_N[m_{\delta_N}]}<\tau_{ \Ss_N[\{m_+(N), m_{\varepsilon_N}\}]} \Big),
	\end{equation} 
	which implies that
	\begin{equation}\label{eq:plug_psi}
		\begin{split}
			\PP_{\si}\Big(\tau_{\Ss_N[m_{\delta_N}]}<\tau_{ \Ss_N[\{m_+(N), m_{\varepsilon_N}\}]} \Big) &\leq \frac{\psi(\si)}{\min_{\si' \in \Ss_N[m_{\delta_N}]} \psi(\si') }.
		\end{split}
	\end{equation} 
	For a suitably chosen $\psi$ the latter inequality will yield the desired upper bound.
	Now we are left with the choice of a suitable $\psi: \Ss_N \to \RR$ such that $L\psi(x) < 0$, for all $x \in \Ss_N\big[[m_{\delta_N},m_{\varepsilon_N})\big]$. We define a function $\psi$ which depends on a parameter $\gamma \in (0,1)$ and is 
	constant on fixed magnetisation sets, i.e,  for all $\si \in \Ss_N$, 
	\begin{equation}\label{eq:def_psi}
		\psi(\si)=\phi(m(\si)),
	\end{equation}
	where $\phi: [-1,1] \to \RR$ is defined by 
	\begin{equation}\label{eq:defphi}
		\phi(m)= \exp\big(\beta N \,(1-\gamma) f_{\beta}(m)\big).
	\end{equation}
	Our choice of $\psi$ is similar to the one used by Bianchi, Bovier and Ioffe in~\cite[Proposition 6.4]{BBI09}. The choice of $\gamma$ is relevant in \eqref{eq:gammaConstr}.
	
	We claim and prove later in Lemma~\ref{lem:superHarm} that $\psi$ is super-harmonic in $\Ss_N\big[[m_{\delta_N},m_{\varepsilon_N})\big]$. Therefore, we conclude the proof by inserting  $\psi$ in \eqref{eq:plug_psi} and obtaining
	\begin{equation}
		\begin{split}
			\PP_{\si}\left(\tau_{\Ss_N[m_{\delta_N}]}<\tau_{ \Ss_N[\{m_+(N),\, m_{\varepsilon_N}\}]} \right) &\leq \frac{\exp\big(\beta N (1-\gamma) f_{\beta} (m(\si))\big)}{\min_{\si' \in \Ss_N[m_{\delta_N}]} \exp\big(\beta N (1-\gamma) f_{\beta} (m(\si'))\big) }\\
			&= \exp\left(\beta N (1-\gamma) \left[f_{\beta} (m(\si))-f_{\beta} (m_{\delta_N})\right]\right)\\
			&=   \exp\left(-\beta N (1-\gamma) \left[f_{\beta} (m_-) + \delta_N -f_{\beta} (m(\si))\right]\right),
		\end{split}
	\end{equation} 
	where we used the definition of $m_{\delta_N}$ (see Section~\ref{sec:notat_delta_theta}). 
\end{proof}

We are now left with the proof of the super-harmonicity of $\psi$, which is used in the proof of Lemma~\ref{lem:hitting6.3}.
\begin{lemma}\label{lem:superHarm}
	$\psi$ defined in \eqref{eq:def_psi} is super-harmonic in $\Ss_N\big[[m_{\delta_N},m_{\varepsilon_N})\big]$.
\end{lemma}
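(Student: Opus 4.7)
The plan is to compute $L\psi(\sigma)$ directly for $\sigma \in \Ss_N\bigl[[m_{\delta_N},m_{\varepsilon_N})\bigr]$ and show the result is strictly negative, by comparison with the corresponding CW quantity. Since $\psi(\sigma)=\phi(m(\sigma))$ with $\phi$ as in \eqref{eq:defphi}, only nearest-neighbour flips $\sigma\to\sigma^\ell$ contribute. Splitting the sum over $\ell$ according to the sign of $\sigma_\ell$, and using \eqref{eq:rates},
\begin{equation*}
L\psi(\sigma) \;=\; \bigl[\phi(m+\tfrac{2}{N})-\phi(m)\bigr]\,R^{+}(\sigma) \;+\; \bigl[\phi(m-\tfrac{2}{N})-\phi(m)\bigr]\,R^{-}(\sigma),
\end{equation*}
where $R^{\pm}(\sigma):=\tfrac{1}{N}\sum_{\ell:\sigma_\ell=\mp1}\exp\bigl(-\beta[H_N(\sigma^\ell)-H_N(\sigma)]_{+}\bigr)$ are the aggregate up/down-flip rates. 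A second-order Taylor expansion yields $\phi(m\pm\tfrac{2}{N})=\phi(m)\exp\bigl(\pm a + O(1/N)\bigr)$ with $a := 2\beta(1-\gamma)f_\beta'(m)$. Since $[m_{\delta_N},m_{\varepsilon_N})\subset(m^*,m_+)$ lies in the strictly decreasing part of $f_\beta$, by continuity $f_\beta'(m)\leq -c<0$ on the closed sub-interval, so $a<0$ with absolute value bounded away from $0$. Super-harmonicity then reduces to verifying $R^{-}(\sigma)/R^{+}(\sigma)< e^{a}(1+O(1/N))$.

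The next step is to compare with the CW rates $\tilde r_N$ of \eqref{eq:rates_r}. Via \eqref{eq:diff_H} and the decomposition $J_{ij}=p+\hat J_{ij}$, one writes $H_N(\sigma^\ell)-H_N(\sigma) = \tilde H_N(\sigma^\ell)-\tilde H_N(\sigma) + 2\sigma_\ell X_\ell$, where $X_\ell(\sigma) := (Np)^{-1}\sum_{i\neq\ell}\hat J_{i\ell}\sigma_i$. A Hoeffding-type concentration bound for the centred sums $X_\ell$, combined with a union bound over $\sigma\in\Ss_N$ and $\ell\in[N]$ whose cost $N\cdot 2^N$ is absorbed by the exponential tail for a suitable $p$-dependent threshold $t_0$, furnishes on an event of probability in $J$ tending to $1$ the uniform bound $\sup_{\sigma,\ell}|X_\ell|\leq t_0$. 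With $t_0$ small enough that $m+h>t_0$ throughout $[m_{\delta_N},m_{\varepsilon_N})$, the positive parts in the up-flip exponents vanish, giving $R^{+}(\sigma)=\tilde R^{+}(m)=(1-m)/2$ exactly, while $R^{-}(\sigma)\leq \tilde R^{-}(m)\,e^{2\beta t_0}$ by a pointwise bound on the down-flip exponent.

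Finally, a direct computation for the CW rates yields $\tilde R^{-}(m)/\tilde R^{+}(m)=e^{a}\exp\bigl(-2\beta\gamma f_\beta'(m)-2\beta/N\bigr)$, and combining with the previous step,
\begin{equation*}
\frac{R^{-}(\sigma)}{R^{+}(\sigma)} \;\leq\; e^{a}\,\exp\bigl(-2\beta\gamma c + 2\beta t_0 + O(1/N)\bigr).
\end{equation*}
Choosing $t_0<\gamma c$ makes the exponent strictly negative for $N$ large, yielding $L\psi(\sigma)<0$ throughout the domain, as required. The main obstacle is controlling the random corrections $X_\ell(\sigma)$ uniformly in both $\sigma\in\Ss_N$ (an exponentially large set) and $\ell\in[N]$: this forces a delicate balance between the Hoeffding threshold (a lower bound on the admissible $t_0$ coming from the $2^N$ union bound) and the CW super-harmonicity gap $\gamma c$ (an upper bound on $t_0$), and is what ultimately constrains the admissible range of $\gamma$ in the construction of $\psi$.
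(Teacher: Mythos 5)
Your strategy is genuinely different from the paper's, and it does not close. The paper's proof is deterministic (no $\PP_J$ qualifier, consistent with how the lemma is invoked pointwise inside Lemma~\ref{lem:hitting6.3}): it never tries to show that the random fluctuations of the flip rates are small. Instead it bounds the ratio of the random rate to the Curie--Weiss rate by a fixed constant $\eee^{2\beta(1+h)}$, exactly as in the capacity estimates \eqref{eq:ratio_trans+}--\eqref{eq:ratio_trans-}, and then shows that the quantity $G_+$ of \eqref{eq:G+} built from the mean-field rates $r_\pm$ and the discrete free-energy increment $g$ is strictly negative. The sign comes entirely from the factor $\bigl(1-\eee^{2\beta\gamma\, g(\bar m-\frac 2N)}(1+O(1/N))\bigr)$, i.e.\ from the slack $\gamma\in(0,1)$ in $\phi$ and from $g<0$ on $(m^*,m_+)$; the randomness contributes only a harmless positive multiplicative constant in front.

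Your proposal instead demands that the random corrections $X_\ell(\sigma)=(Np)^{-1}\sum_{i\neq\ell}\hat J_{i\ell}\sigma_i$ be \emph{uniformly} small, say $\sup_{\sigma,\ell}|X_\ell|\leq t_0$, so that $R^-/R^+\leq(\tilde R^-/\tilde R^+)\eee^{2\beta t_0}$. This requires a union bound over all $\sigma\in\Ss_N$ and $\ell\in[N]$, and that union bound defeats the concentration: Hoeffding gives $\PP_J(|X_\ell(\sigma)|>t)\leq 2\exp(-t^2Np^2/2)$, so beating $N\cdot 2^N$ forces $t_0>\sqrt{2\log 2}/p$, which is already larger than the trivial deterministic bound $|X_\ell|\leq (N-1)/(Np)<1/p$. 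Your constraint $t_0<\gamma c$ with $c=\min_{[m_{\delta_N},m_{\varepsilon_N}]}|f_\beta'|$ and $\gamma<1$ therefore cannot be met in general: the left side is bounded below by a constant of order $1/p\geq 1$, while the right side is less than $1$ and shrinks as $\delta,\varepsilon\to 0$. This is a structural obstruction, not a matter of sharpening constants; the idea that $\gamma$ can be chosen to accommodate the concentration threshold is incompatible with the separate constraint \eqref{eq:gammaConstr} already imposed on $\gamma$, which pushes it small. In addition, your CW ratio formula has a sign slip: the correct computation gives $\tilde R^-/\tilde R^+=\eee^{a}\exp\bigl(2\beta\gamma f_\beta'(m)+2\beta/N\bigr)$ (with $f_\beta'<0$), not $\exp(-2\beta\gamma f_\beta'(m)-2\beta/N)$; your final display happens to carry the right sign only because you also flipped $f_\beta'$ into $c>0$. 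To make this work you should follow the paper: replace the probabilistic control of $X_\ell$ by the deterministic ratio bound, rewrite $L\psi$ through the projected rates $r_\pm$ as in \eqref{eq:Lpsi}--\eqref{eq:G+}, and let the $\gamma$-gap in $G_+$ supply the negativity.
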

\begin{proof}
	We have to prove that $L\psi(x)<0$, for all $x \in \Ss_N\big[[m_{\delta_N},m_{\varepsilon_N})\big]$. Set $\bar{m}= m(x)$.
	As usual, we try to rewrite the terms appearing in the expression for $L\psi(x)$ in terms of their mean-field version. 
	
	\begin{equation}\label{eq:Lpsi}
		\begin{split}
			L\psi(x) &= \sum_{y \in \Ss_N } p(x,y) [\psi(y)-\psi(x)] \\
			&= \frac{1}{N}\sum_{y\in  \Ss_N} \mathds{1}_{y \sim x }\exp\big(-\beta [H(y)-H(x)]_+\big)  \\
			&\qquad\times\Big[\exp\left(\beta (1-\gamma)  N f_{\beta}(m(y))\right)- \exp\left(\beta(1-\gamma)  N f_{\beta}(m(x))\right)\Big] \\
			&=  \frac{1}{N}\sum_{m \in  \Gamma_N }\exp\big(-\beta N [E(m)-E(\bar{m})]_+\big)  \\
			&\qquad\times\Big[\exp\big(\beta N (1-\gamma)  f_{\beta}(m)\big)-
			\exp\big(\beta N(1-\gamma)  f_{\beta}(\bar{m})\big)\Big]\\
			& \qquad \qquad  \times \sum_{y : m(y)=m} \mathds{1}_{x \sim y} 
			\frac{\exp\big(-\beta [H(y)-H(x)]_+\big)}{\exp\big(-\beta N [E(m)-E(\bar{m})]_+\big)}\\
			&\leq\sum_{m \in  \Gamma_N } \exp\big(-\beta N [E(m)-E(\bar{m})]_+\big) \, \phi(\bar{m})  \,
			\Big[\exp\left(\beta N(1-\gamma) [f_{\beta}(m) - f_{\beta}(\bar{m})]\right) - 1\Big] \\
			& \qquad \qquad \times \eee^{2\beta} \left[ \frac{1+\bar{m}}{2}\mathds{1}_{\bar{m}-\frac{2}{N}}(m) +  \frac{1-\bar{m}}{2}\mathds{1}_{\bar{m}+\frac{2}{N}}(m) \right], 
		\end{split}
	\end{equation}
	where $\phi$ is defined in \eqref{eq:defphi} and we used the upper bound $\exp(2\beta)$ on $G(\sigma, m')$ as in the proof of the upper bound on capacity 
	(see \eqref{eq:ratio_trans+}, \eqref{eq:ratio_trans-}).
	
	Now, recalling definition \eqref{eq:rates_r}, we use the following notation
	\begin{equation}\label{r+}
		\begin{split}
			r_+=\tilde{r}_N\left(\bar{m},\bar{m}+\tfrac{2}{N}\right) 
			& =\exp\left(-2\beta \left[-\frac{1}{N} -(\bar{m}+h)\right]_+ \right) \frac{1-\bar{m}}{2},\\
		\end{split}
	\end{equation}
	\begin{equation}\label{r-}
		\begin{split}
			r_-=\tilde{r}_N\left(\bar{m},\bar{m}-\tfrac{2}{N}\right)  
			&= \exp \left(-2\beta \left[-\frac{1}{N} +\bar{m}+h\right]_+ \right) \frac{1+\bar{m}}{2},
		\end{split}
	\end{equation}
	and, for all $m \in \Gamma_N \setminus \{1\}$,
	\begin{equation}\label{eq:def_g}
		g(m)=\frac{N}{2} \left[f_{\beta}\left(m+\tfrac{2}{N}\right)-f_{\beta}(m) \right]. 
	\end{equation}
	Therefore, we can rewrite \eqref{eq:Lpsi} as
	\begin{equation}
		\begin{split}
			L\psi(x)&\leq  \eee^{2\beta} \,\phi(\bar{m}) \,r_+ \,\Big[\exp\big(2\beta(1-\gamma) g(\bar{m})\big) - 1\Big] \\
			&\qquad+ \eee^{2\beta}\, \phi(\bar{m}) \,r_-\, \Big[\exp\left(-2\beta(1-\gamma) g\left(\bar{m}-\tfrac{2}{N}\right)\right) - 1\Big]\\
			&= \eee^{2\beta} \,\phi(\bar{m})\, r_+\, G_+,
		\end{split}
	\end{equation}
	
	where
	\begin{equation}\label{eq:G+}
		\begin{split}
			G_+  = \Big(\mathrm{e}^{2\beta(1-\gamma)  g(\bar{m})} - 1\Big) 
			+\frac{r_- }{r_+} \left(\mathrm{e}^{-2\beta(1-\gamma)  g\left(\bar{m}-\tfrac{2}{N}\right)} - 1\right). 
		\end{split}
	\end{equation}
	
	Being $\eee^{2\beta} $, $\phi(\bar{m})$ and $r_+$ positive, we have only  to show that $G_+<0$. First  
	we notice that 
	\begin{equation}\label{eq:gm}
		g(m)=- m  -h + \frac{1}{\beta} I'(m) +O\left(\tfrac{1}{N}\right)
	\end{equation}
	and similarly 
	\begin{equation}\label{eq:gm-}
		g\left(m-\tfrac{2}{N}\right)=- m  -h + \frac{1}{\beta} I'(m) +O\left(\tfrac{1}{N}\right).
	\end{equation}
	Therefore,
	\begin{equation}\label{eq:g-g}
		\begin{split}
			g(m)-g\left(m-\tfrac{2}{N}\right)&=  O\left(\tfrac{1}{N}\right).
		\end{split}
	\end{equation}
	
	Then, since $I'(m)= \frac{1}{2}\log \Big(\frac{1+m}{1-m}\Big)$ (see \eqref{eq:limIN}), and using \eqref{eq:gm-} we have
	\begin{equation}\label{eq:r/r}
		\begin{split}
			\frac{r_-}{r_+}&= \frac{1+\bar{m}}{1-\bar{m}}  \frac{\exp \left(2\beta \left[-\frac{1}{N} -(\bar{m}+h)\right]_+  \right)}{\exp \left(2\beta \left[-\frac{1}{N} +\bar{m}+h\right]_+  \right)} \\
			&= \frac{1+\bar{m}}{1-\bar{m}} \exp \left(-2\beta (\bar{m}+h)  \right) \left(1+O\left(\tfrac{1}{N}\right)\right)\\
			&= \exp \big( 2 I'(\bar{m}) -2\beta (\bar{m}+h)  \big) \left(1+O\left(\tfrac{1}{N}\right)\right)\\
			& = \exp \left(2 \beta \left[ g\left(\bar{m}-\tfrac{2}{N}\right) + \bar{m}  +h +O\left(\tfrac{1}{N}\right) \right] -2\beta (\bar{m}+h)  \right)\left(1+O\left(\tfrac{1}{N}\right)\right) \\
			&= \exp\left(2 \beta \,g\left(\bar{m}-\tfrac{2}{N}\right) \right) \left(1+O\left(\tfrac{1}{N}\right)\right). 	
		\end{split}
	\end{equation}
	Therefore,  rearranging \eqref{eq:G+} and then using  \eqref{eq:g-g} and \eqref{eq:r/r}, we obtain
	\begin{equation}\label{eq:lastG+}
		\begin{split}
			G_+&= \Big[\exp\big(2\beta\,(1-\gamma) \,g(\bar{m})\big) - 1\Big]  \left[1- \frac{r_- }{r_+}\exp\left(-2\beta\, (1-\gamma)\,g\left(\bar{m}-\tfrac{2}{N}\right)\right)
			\right]  \\
			&\quad + \frac{r_- }{r_+} \Big[\exp\left(2\beta\,(1-\gamma)  \left[g(\bar{m})-g\left(\bar{m}-\tfrac{2}{N}\right)\right]\right) -1  \Big]\\
			&= \Big[\exp\big(2\beta\,(1-\gamma)\, g(\bar{m})\big) - 1\Big] \,
			\Big[1- \exp\left(2\beta\,\gamma \,g\left(\bar{m}-\tfrac{2}{N}\right) \right)\left(1+O\left(\tfrac{1}{N}\right)\right) \Big]\\
			&\quad + \frac{r_- }{r_+} \Big[ 
			\exp\left(2 \beta  \,(1-\gamma) \,O\left(\tfrac{1}{N}\right) \right) -1  \Big].\\
		\end{split}
	\end{equation}	
	Notice that, for every $m \in [m_{\delta_N},m_{\varepsilon_N}) \subset [m^*,m_+)$, $g(m)$ is negative, being $f_{\beta}$ strictly decreasing in $ [m^*,m_+)$. 
	As a consequence, $\mathrm{e}^{2\beta(1-\gamma) g(\bar{m})} - 1<0$. Furthermore, for $N$ sufficiently large, 
	$1- \eee^{2\beta \gamma g(\bar{m}-\frac{2}{N})}
	\left(1+ O\left(\frac{1}{N}\right)\right)>0$, implying that the first term in \eqref{eq:lastG+} is negative.
	
	Moreover, $\frac{r_-}{r_+}\geq 0$ is uniformly bounded from above, for $N$ sufficiently large. Therefore, since  $\beta$ is finite, $\gamma \in (0,1)$ and the term $\left[
	\exp\left(2 \beta  (1-\gamma) \, O\left(\frac{1}{N}\right)\right)-1 \right]$ is positive but converging to zero as $N$ grows to infinity, the second term in \eqref{eq:lastG+} is negligible.

	Therefore, for $N$ sufficiently large, $G_+$ is negative, concluding the proof.
	
\end{proof}

\subsection{Lower bound on the harmonic sum} \label{sec:lowerHarm}
In this section we provide the main ideas to prove the second part of Theorem~\ref{thm:upperHarm}, namely the lower bound 
on the harmonic sum in \eqref{eq:harmonicsum}.

\begin{proof}[Proof of Theorem~\ref{thm:upperHarm}. Lower bound.]
	
	The proof is very similar to the proof of the upper bound we gave in Section~\ref{sec:upperHarm}, therefore we omit the details. 
	The main contribution is given once again by the sum on $\Ss_N\left[U_{\delta,N}(m_-)\right]$.
	
	We have, 
	\begin{equation}\label{eq:sumUpLower}
		\begin{split}
			&\sum_{\si \in \Ss_N} \mu_{\beta,N}(\si) \,  h^N_{m_-,m_+}(\si) \geq  \sum_{\si \in \Ss_N[U_{\delta,N}(m_-)] } \mu_{\beta,N}(\si)  \,  h^N_{m_-,m_+}(\si) \\
			& =  \sum_{\si \in \Ss_N[U_{\delta,N}(m_-)]} \mu_{\beta,N}(\si)  -\sum_{\si \in \Ss_N[U_{\delta,N}(m_-)]} \mu_{\beta,N}(\si)  \big(1-h^N_{m_-,m_+}(\si)\big)\\
			& \geq \sum_{m \in U_{\delta,N}(m_-)\setminus [-1,m_-(N))} \Qq_{\beta,N}(m) \\
			& \qquad  -\sum_{\si \in \Ss_N[U_{\delta,N}(m_-)\setminus [-1,m_-(N))]} \mu_{\beta,N} (\si) \,\PP_{\si}\Big(\tau_{\Ss_N[m_+(N)]} < \tau_{\Ss_N[m_-(N)]}\Big).	\\
		\end{split}
	\end{equation} 
	
	The first term, i.e. the sum on the mesoscopic measure $\Qq_{\beta,N}$, gives the main contribution. 
	This sum can be estimated from below using the lower bound in Corollary~\ref{cor:theBound}, obtaining 
	a lower bound similar to the second upper bound in Corollary~\ref{cor:sumQ}  and applying the saddle point method as in \eqref{eq:part1}. 
	More precisely, using notation \eqref{eq:Psnotation}, we have the following lower bound  for $s>0$:
	\begin{equation}\label{eq:lower_Q-}
		\sum_{m \in U_{\delta,N}(m_-)\setminus [-1,m_-(N))} \Qq_{\beta,N}(m) \overset{P(s)}{\geq}  \frac{\eee^{\kappa-s}\, \exp\big(-\beta N f_{\beta} (m_-)\big)}{Z_{\beta,N} \sqrt{\left(1-m_-^2\right)\, \beta  \,f''_{\beta} (m_-) }}\,(1+o(1)).
	\end{equation}
	
	The second term in \eqref{eq:sumUpLower}, appearing with a negative sign in front, 
	is estimated via an upper bound, obtaining
	\begin{multline}\label{eq:lowerH_main}
		\sum_{\si \in \Ss_N[U_{\delta,N}(m_-)\setminus [-1,m_-(N))]} \mu_{\beta,N} (\si) \,\PP_{\si}\Big(\tau_{\Ss_N[m_+(N)]} < \tau_{\Ss_N[m_-(N)]}\Big) \\
		\leq  \frac{\eee^{s+\alpha} \exp\big({-\beta N f_{\beta} (m_-) }\big)} {Z_{\beta,N}} \sqrt{\frac{2}{\pi \left(1-m_+^2\right)}}   \, \eee^{-\beta N c} (1+o(1)),
	\end{multline}
	which is negligible compared to the right hand side of \eqref{eq:lower_Q-}, concluding the proof.
	
	We omit the proof of \eqref{eq:lowerH_main} being it again technical and very similar to the proof of the upper bound \eqref{eq:upper_part4} in Part 4 of Section~\ref{sec:upperHarm}. An analogue construction to the one given in Section~\ref{sec:notat_delta_theta} 
	and similar proofs to those in Section~\ref{sec:lemmas_hitting} are needed. The main difference consists in 
	restricting the analysis on a right neighbourhood of $m_-(N)$ instead of  a left neighbourhood of $m_+(N)$. 
	
\end{proof}	
	

	
\end{document}